\documentclass{amsart}
\usepackage{amsmath, amssymb, graphicx, epsfig, verbatim, psfrag, color,amscd,stmaryrd,rsfso,mathabx} 
\usepackage{tikz}
\usepackage{amsthm}
\usepackage{dutchcal}

\usepackage{enumitem}
\usetikzlibrary{arrows}
\usepackage[T1]{fontenc}
\usepackage[utf8]{inputenc}

\usepackage{mathrsfs}
\newcommand\oL{\vec{L}}
\newcommand\DiagUp{\widehat{\Diag}}

\newcommand\wpt{\mathrm{w}}
\newcommand\zpt{\mathrm{z}}

\newcommand\gen{K}
\newcommand\ind{\mathrm{ind}}
\newcommand\longchord{\upsilon}

\newcommand\DmodAlg{\Dmod^{alg}}

\newcommand\MatchIn{\widecheck{\Matching}}
\newcommand\Win{\widecheck{W}}
\newcommand\Mout{\widehat{\Matching}}
\newcommand\Zin{\widecheck{Z}}
\newcommand\Zmid{Z^{\parallel}}
\newcommand\Zout{\widehat{Z}}
\newcommand\Mmid{M^\parallel}
\newcommand\Wmid{W^{\parallel}}
\newcommand\Wup{\widecheck{W}}

\newcommand\Hmid{{\mathcal H}^\parallel}
\newcommand\HmidEx{{\mathcal H}^{\parallel,x}}
\newcommand\alphaOut\alphaout
\newcommand\alphain{\widecheck{\alpha}}
\newcommand\alphaIn\alphain

\newcommand\alphaout{\widehat{\alpha}}

\newcommand\Clgin{\widecheck{\Clg}}
\newcommand\Clgout{\widehat{\Clg}}
\newcommand\cClgin{\Clgin^\star}
\newcommand\cClgout{\Clgout^\star}
\newcommand\BigBlg{\mathfrak{B}}
\newcommand\cBigBlg{\BigBlg^\star}
\newcommand\Blgin{\widecheck{\Blg}}

\newcommand\Blgout{\widehat{\Blg}}

\newcommand\DAmodExtHat{{\widehat{\DAmod}}^x}
\newcommand\DAmodEx{\DAmod^x}

\newcommand\HmidE{{\mathcal H}^{\parallel,e}}

\newcommand\rhos{\boldsymbol{\rho}}

\newcommand\UnparModFlow{\widehat{\mathcal M}}
\newcommand\alphas{\boldsymbol{\alpha}}
\newcommand\betas{\boldsymbol{\beta}}
\newcommand\Source{\mathscr{S}}

\newcommand\orb{o}
\newcommand\goesto{\mapsto}

\newcommand\Hup{\mathcal{H}^{\wedge}}
\newcommand\Hdown{\mathcal{H}^{\vee}}

\usetikzlibrary{matrix,arrows,positioning}
\tikzset{cdlabel/.style={above,sloped,
    execute at begin node=$\scriptstyle,execute at end node=$}}
\tikzset{algarrow/.style={->, thick}}
\tikzset{blgarrow/.style={->, thick}}
\tikzset{clgarrow/.style={->, thick}}
\tikzset{tensoralgarrow/.style={double, double equal sign distance, -implies}}
\tikzset{tensorblgarrow/.style={double, double equal sign distance, -implies}}
\tikzset{tensorclgarrow/.style={double, double equal sign distance, -implies}}
\tikzset{modarrow/.style={->, dashed}}
\tikzset{othmodarrow/.style={->, thick}}
\tikzset{Amodar/.style={->, dashed}}
\tikzset{Dmodar/.style={->, dashed}}

\newcommand\HD{\mathcal H}

\newcommand\Diag{\mathcal D}
\newcommand\States{\mathfrak{S}}

\def\endproof{\relax\ifmmode\expandafter\endproofmath\else
  \unskip\nobreak\hfil\penalty50\hskip.75em\hbox{}\nobreak\hfil\bull
  {\parfillskip=0pt \finalhyphendemerits=0 \bigbreak}\fi}
\def\endproofmath$${\eqno\bull$$\bigbreak}
\def\bull{\vbox{\hrule\hbox{\vrule\kern3pt\vbox{\kern6pt}\kern3pt\vrule}\hrule}}

\newcommand\CanonDD{\mathcal K}

\newcommand\Lmid{\ell_\parallel}

\newcommand\HFKa{\widehat{\mathit{HFK}}}
\newcommand\HFLa{\widehat{\mathit{HFL}}}
\newcommand\HFLm{{\mathit{HFL}}^-}
\newcommand\CFLa{\widehat{\mathit{CFL}}}

\newtheorem{thm}{Theorem}[section]

\newtheorem{cor}[thm]{Corollary}

\newtheorem{lemma}[thm]{Lemma}
\newtheorem{prop}[thm]{Proposition}
\newtheorem{defn}[thm]{Definition}

\newtheorem{rem}[thm]{Remark}
\newtheorem{remark}[thm]{Remark}

\numberwithin{equation}{section}

\newcommand\OneHalf{\frac{1}{2}}

\newcounter{bean}

\mathsurround=1pt
\setlength{\parindent}{0em}
\setlength{\parskip}{1.2ex}

%
%

%
%

\newcommand\IdempRing{I}

\newcommand\Idemp[1]{\mathbf{I}_{#1}}
\newcommand\DT{\boxtimes}
\newcommand\x{\mathbf x}

\newcommand\y{\mathbf y}

\newcommand\lsup[2]{^{#1}{#2}}

\newcommand\cBlg{\Blg^\star}

%
%




 \newcommand{\Z}{\mathbb Z}   \newcommand{\R}{\mathbb R}

\newcommand\Ax{\mathbf A}

\newcommand\BB{\mathbf B}
\newcommand\XX{\mathbf X}
\newcommand\YY{\mathbf Y}

\newcommand\Field{\mathbb F}


%
%

\newcommand\Alg{\mathcal A}
\newcommand\Blg{\mathcal B}
\newcommand\BlgZ{{\mathcal B}_0}
\newcommand\Clg{\mathcal C}

\newcommand\Ainf{{\mathcal A}_{\infty}}
\newcommand\Ainfty\Ainf
\newcommand\Zmod[1]{{\mathbb Z}/{#1}{\mathbb Z}}

\newcommand\OurRing{\mathcal R}

\newcommand\Amod{Q}
\newcommand\Dmod{R}
\newcommand\DmodHat{\widehat \Dmod}
\newcommand\AmodHat{\widehat \Amod}
\newcommand\DAmodHat{\widehat \DAmod}

\newcommand\DAmod{RQ}

\newcommand\Mdown{\widecheck{M}}
\newcommand\Mup{\widehat{M}}

\newcommand\doms{\mathcal D}

\newcommand\Mgr{\mathbf{m}}
\newcommand\Agr{\mathfrak{A}}
\newcommand\weight{\mathbcal{w}}

\newcommand\DuAlg{{\mathcal A}'}

\newcommand\Matching{M}

\newcommand\cClg{\Clg^\star}
\newcommand\nDuAlg{\Alg''}

\newcommand\Iup{\widehat{\mathbf I}}

\newcommand\MarkedMin{V}

\newcommand\MarkedMinHat{\widehat{\MarkedMin}}

\newcommand\bOut{\widehat{b}}
\newcommand\bIn{\widecheck{b}}

\newcommand\Alex{\mathbb A}
\newcommand\Mas{\mu}
\newcommand\CFL{\mathit{CFL}}
\newcommand\HFL{\mathit{HFL}}
\newcommand\dIn{\widecheck{\partial}}
\newcommand\dOut{\widehat{\partial}}

\newcommand\nMarkedMin{\MarkedMin'}
\newcommand\nMarkedMinHat{\MarkedMinHat'}
\newcommand\cald{\mathcal D}

\newlabel{HK:subsec:VaryCx}{{8.3}{70}}
\newlabel{HK:prop:DefineBigradingD}{{4.5}{22}}

\newlabel{HK:lem:GlobalMinimum}{{8.14}{72}}
\newlabel{HK:thm:ComputeD2}{{16.3}{156}}
\newlabel{HK:thm:PairDAwithD}{{10.11}{120}}
\newlabel{HK:prop:ExtendDA}{{11.1}{125}}
\newlabel{HK:prop:DAmid}{{10.10}{119}}
\newlabel{HK:prop:TypeDquasiIso}{{6.4}{34}}
\newlabel{HK:prop:CurvedTypeD}{{6.3}{37}}
\newlabel{HK:sec:Heegs}{{2}{5}}
\newlabel{HK:prop:ExtendDAPrecise}{{11.7}{113}}
\newlabel{HK:sec:ExtendDA}{{11}{111}}
\newlabel{HK:sec:Bimodules}{{10}{101}}
\newlabel{HK:subsec:nDuAlg}{{12.3}{121}}
\newlabel{BK2:sec:DefCanonDD}{{2.3}{8}}
\newlabel{HK:thm:PairAwithD}{{9.1}{74}}

\newlabel{HK:subsec:AlgebraicConstraints}{{8.1}{61}}

\newlabel{HK:sec:AlgDA}{{12}{131}}

\newlabel{BK2:subsec:AltConstr}{{7.3}{47}}

\newlabel{BK2:subsec:ConstructInvariant}{{8.2}{53}}
\newlabel{BK2:sec:Fast}{{13}{91}}
\newlabel{BK2:prop:AdaptedTensorProd}{{2.7}{14}}
\newlabel{BK2:sec:DuAlg}{{6}{39}}
\newlabel{BK2:subsec:DuAlgNegCross}{{6.2}{41}}
\newlabel{BK2:sec:Max}{{5}{34}}
\newlabel{BK2:lem:ConstructDeltaTwo}{{5.2}{37}}
\newlabel{BK2:sec:Algebras}{{2}{6}}
\newlabel{BK2:prop:PosExt}{{3.3}{23}}
\newlabel{BK2:thm:MaxDA}{{5.3}{37}}
\newlabel{BK2:prop:DAcrosses}{{6.3}{41}}
\newlabel{BK2:thm:MinDual}{{7.10}{50}}
\newlabel{BK2:def:NegCrossing}{{3.5}{25}}
\newlabel{BK2:subsec:DDmin}{{7.1}{43}}
\newlabel{BK2:subsec:DDcross}{{3.1}{18}}
\newlabel{BK2:prop:DualCross}{{3.4}{24}}
\newlabel{BK2:sec:Cross}{{3}{18}}

\newlabel{BK2:sec:DualCross}{{3.4}{24}}

\newlabel{BK2:lem:DuAlgDualCross}{{6.2}{41}}
\newlabel{BK2:prop:RestrictIdempotent}{{8.2}{54}}

\newlabel{BK1:sec:Crit}{{7}{79}}
\newlabel{BK2:def:AdaptedDA}{{2.5}{13}}
\newlabel{BK1:lem:CrossingDADD}{{6.2}{68}}

\makeatletter
\renewenvironment{proof}[1][\proofname]{\par
\pushQED{\qed}%
\normalfont \topsep6\p@\@plus6\p@\relax
\trivlist
\item\relax
{\bf#1\@addpunct{.}}\hspace\labelsep\ignorespaces
}{%
\popQED\endtrivlist\@endpefalse
}
\makeatother


\begin{document}
\title{Algebras with matchings and link Floer homology}

\author[Peter S. Ozsv\'ath]{Peter Ozsv\'ath}
\thanks {PSO was supported by NSF grant number DMS-1405114 and DMS-1708284.}
\address {Department of Mathematics, Princeton University\\ Princeton, New Jersey 08544} 
\email {petero@math.princeton.edu}

\author[Zolt{\'a}n Szab{\'o}]{Zolt{\'a}n Szab{\'o}}
\thanks{ZSz was supported by NSF grant numbers DMS-1606571 and DMS-1904628}
\address{Department of Mathematics, Princeton University\\ Princeton, New Jersey 08544}
\email {szabo@math.princeton.edu}

\begin{abstract}
  We explain how to use bordered algebras to compute
  a version of link Floer homology.
  As a corollary, we also give a fast computation of the Thuston polytope for
  links in $S^3$.
\end{abstract}

\maketitle

\newcommand\OneFourth{\frac{1}{4}}
\newcommand\lk{\ell k}
\newcommand\orL{\vec{L}}
\newcommand\Ta{\mathbb{T}_\alpha}
\newcommand\Tb{\mathbb{T}_\beta}
\newcommand\DmodRed{\overline{\Dmod}}
\section{Introduction}
\label{sec:Intro}

The aim of this paper is to generalize the bordered construction of
knot invariants from~\cite{HolKnot} to handle  the case of
links, giving a practical computation of a variant of link Floer
homology for links in $S^3$~\cite{Links}, which is sufficient to
determine the Thurston polytope of such links. (The reader should
compare with the computation of link homology using grid diagrams
from~\cite{MOS}; see also~\cite{MOST,GridBook}.)

As background, let $\orL$ be an oriented link with $\ell$
components. Such a a link has a multi-variable Alexander polynomial
$\Delta_{L}\in \Z[t_1^{\pm 1/2},\dots,t_\ell^{\pm 1/2}]$.  The
oriented meridians for the link give an identification of $\Z[t_1^{\pm
  1},\dots,t_{\ell}^{\pm 1}]$ with the group-ring $\Z[H^1(S^3\setminus
L)]$; and in this way, we can view the Alexander polynomial
$\Delta_{L}$ as defining for an (unoriented) link an element of
$\Z[H^1(S^3\setminus L)]$.  As explained in~\cite{Kauffman}, this
polynomial can be written as a state sum of Kauffman states for a
decorated link projection, with local contributions as shown in
Figure~\ref{fig:KauffLink}. 

\begin{figure}[h]
 \centering
 \input{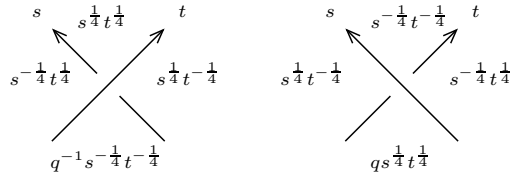}
 \caption{{\bf Kauffman states for links.}  Monomial contributions at
   each crossing of a Kauffman state. Monomials are in the variable
   $q$, whose exponent records the Maslov grading, and variables
   corresponding to the oriented strands, whose exponents record the
   Alexander gradings. The strand exiting on the top left resp. top
   right corresponds to the variable $s$ resp. $t$.}
 \label{fig:KauffLink}
 \end{figure}

\begin{rem}
  Note that there are two conventions in link (and knot) Floer
  homology, corresponding how the orientation of a link is encoded in
  a pointed Heegaard diagram. Since link Floer homology is invariant
  under reversing the orientation of all components, both conventions
  ultimately lead to the same invariant. Figure~\ref{fig:KauffLink}
  is consistent with the
  orientation convention from~\cite{GridBook}; it is opposite to
  the one from~\cite[Figure~1]{BorderedKnots}, which in turn follow the
  conventions from~\cite{ClassKnots}.
\end{rem}

In~\cite{Links}, we defined a version of ``link Floer homology'',
$\HFLa(L)$, which is a finite-dimensional vector space over
$\Field=\Zmod{2}$.  That group is equipped with two gradings, one with
values in $\Z$, and another with values in an affine space
${\mathbb H}\subset \OneHalf H_1(S^3\setminus L;\Z)$
for $H^1(S^3\setminus L;\Z)$. The subset ${\mathbb H}\subset \OneHalf H_1(S^3\setminus L;\Z)$
consists of elements $\sum_{i=1}^{\ell} a_i \cdot [\mu_i]$ with
\[ 2 a_i +\lk (L_i,L\setminus L_i)\in 2\Z, \]
for $i=1,\dots,\ell$; where $L_i$ denotes the $i^{th}$ component of $L$.
We denote the direct sum decomposition of link Floer homology
\[ \HFKa(L)\cong \bigoplus_{d\in\Z, h\in {\mathbb H}} \HFKa_d(L,h).\]

Link Floer homology with $\ell>1$ has two key features. One is its relationship
with the Alexander polynomial:
\[ \bigoplus_{d\in \Z,h\in {\mathbf H}} (-1)^{d} \dim(\HFLa_d(L,h))
[h] = \left(\prod_{i=1}^{\ell} (\mu_i^{1/2}-\mu_i^{-1/2})\right)\cdot
\Delta_L,\] 
where $\mu_i$ denotes the oriented meridian of the link component $L_i$. (See~\cite[Equation~(1)]{Links}.)  Another is its
relationship with the Thurston polytope.  This is stated in terms of
the {\em link Floer homology polytope} in $H^1(S^3\setminus L;\R)$,
which is the convex hull of all $h\in {\mathbb H}$ with
$\HFLa(L,h)\neq 0$. The Minkowski sum of the dual Thurston polytope
with the symmetric hypercube in $H^1(S^3\setminus L)$ with edge-length
two is twice the link Floer homology polytope.
(See~\cite[Theorem~1.1]{LinkTN}; see also~\cite{YiNiGenus}.)

We will consider diagrams $\Diag$ for the projection of 
an oriented link $\orL$. These
diagrams are drawn on the $xy$ plane, and we assume that they are
generic in the following sense:
\begin{itemize}
\item the restriction of $y$ is a Morse function,
  with at most one critical point for each $y$ value
\item the $y$-values of all crossings are distinct from each other
  and from the $y$-values of each critical point.
\end{itemize}

A {\em marked link diagram} $\Diag$ is a projection of an oriented
link $\orL$ together with a collection of basepoints, called {\em
  markers}, one on each component of $L$, and one of these markers is
distinguished.  (The distinguished marker is indicated by a star, and
the others are indicated by a dot.) A marked link projection has two
distinguished regions, which are the regions adjacent to the
distinguished basepoint.

We call a marked link projection {\em canonically marked} if the
marking on each component of the link is the global minimum of the
height function restricted to that component; and the distinguished
marker is at the global minimum of the height function on the entire
projection. A {\em marked upper link diagram} is the restriction of a marked
link projection to an upper halfplane. The upper link diagram is
called {\em canonically marked} if only the closed components of the
tangle have markings on them, and those markings occur at the global
minima of the height function restricted to the closed components.  If
we slice a canonically marked link diagram along a generic horizontal
slice, the diagram falls into a canonically marked upper diagram (and
a lower diagram).  See Figure~\ref{fig:MarkedLink} for some examples.

\begin{figure}[h]
 \centering
 \input{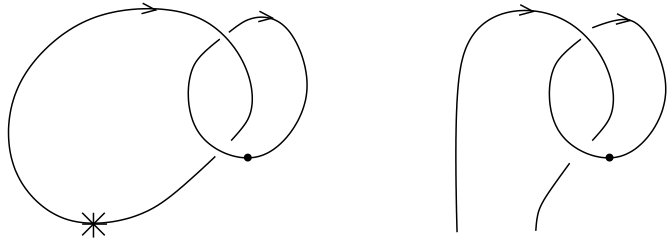}
 \caption{{\bf Marked link projection.}  At the left, a canonically
   marked link projection for the Hopf link.  At the right, a
   canonically marked upper link diagram.}
 \label{fig:MarkedLink}
 \end{figure}

In the present paper, we define a type $D$ structure
$\DmodHat({\Diag})$ to each canonically marked upper link diagram. For
a link in bridge position, where each marker is adjacent to the one of
the two distinguished regions, the generators have an interpretation
in terms of Kauffman states; see
Section~\ref{subsec:HeegLinkProj}.

We also give a method for computing $\DmodHat(\Diag)$.  Specifically,
decompose $\Diag$ into elementary pieces, cutting along horizontal
slices, so that each piece consists of either a maximum, a crossing
between adjacent strands, or a minimum, which can be marked or
not. After introducing extra crossings if necessary, we can assume all
the marked minima occur at the left of the diagram (i.e. they occur
between the between the first
two strands). To each elementary piece, we associate a type $DA$
bimodule, so that the $\DmodHat(\Diag)$ is obtained as an iterated
tensor product of the pieces. Indeed, to the pieces containing
crossings, maxima, and unmarked minima the DA bimodules were already
described in~\cite[Section]{HolKnot}.

Thus, the technical core of this paper is to compute the DA bimodule
of a marked minimum (connecting the first two strands). With this
computation, a suitable adaptation of the pairing theorem
(\cite[Theorem~\ref{HK:thm:PairAwithD}]{HolKnot}; see
also~\cite[Theorem~1.3]{InvPair}) completes the computation of
$\DmodHat(\Diag)$.

Finally, the relationship with link Floer homology is given as
follows. Suppose that $\Diag$ is a canonically marked link diagram.
Cutting it above the global minimum, we obtain a type $D$ structure
$\DmodHat(\Diag)$ over the algebra $\Field[U_1,U_2]/U_1 U_2=0$.

\begin{thm}
  \label{thm:ComputeHFLa}
  Let $\Diag$ be a canonically marked diagram representing $\orL$, and
  $\Diag^+$ be the upper diagram immediately above the global minimum.
  $\HFLa(\orL)$ is the homology of the chain complex obtained from 
  $\DmodHat(\Diag)$ by specializing to $U_1=U_2=0$.
\end{thm}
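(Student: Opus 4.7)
The plan is to reduce Theorem~\ref{thm:ComputeHFLa} to an iterated application of the pairing theorem together with an identification of the resulting bordered tensor product with a standard link Floer chain complex for an appropriate multi-pointed Heegaard diagram. First, slice $\Diag^+$ horizontally into elementary pieces (maxima, crossings of adjacent strands, unmarked minima, and marked minima), and after introducing extra crossings if necessary arrange all marked minima to sit between the first two strands. To each piece assign its type $DA$ bimodule, using the bimodules of \cite{HolKnot} for maxima, crossings, and unmarked minima, and the DA bimodule for a marked minimum computed in the body of this paper. Iterating the pairing theorem \cite[Theorem~\ref{HK:thm:PairAwithD}]{HolKnot} then expresses $\DmodHat(\Diag)$ as a box tensor product of these DA bimodules, giving a type $D$ structure over $\Field[U_1,U_2]/U_1U_2 = 0$ in which $U_1, U_2$ correspond to the two regions adjacent to the global minimum.

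Second, identify this box tensor product with a standard Heegaard-Floer construction. Stacking the elementary bordered Heegaard diagrams yields a multi-pointed Heegaard diagram $\HD$ for $(S^3, L)$ equipped with one basepoint on each component of $L$ at its marker, together with a pair of basepoints $\wpt,\zpt$ flanking the distinguished global minimum. By the pairing theorem, the box tensor product is quasi-isomorphic to the bordered model of $\CFLa(\HD)$ associated to the horizontal slice just above the global minimum, in which the $U$-variables for the non-distinguished markers have already been absorbed during the bimodule compositions while $U_1, U_2$ remain, acting by $\wpt$ and $\zpt$. Setting $U_1 = U_2 = 0$ then produces exactly the hat version $\CFLa(\HD)$; its homology is $\HFLa(\orL)$ by the invariance theorem of \cite{Links}.

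The main obstacle is Step 2: verifying that the bimodule compositions correctly absorb the $U$-variables at the intermediate (non-distinguished) markers so that only $U_1, U_2$ survive at the cut, and that the stacked bordered diagram literally gives a multi-pointed Heegaard diagram for $(S^3, L)$ computing $\HFLa(\orL)$. This requires careful bookkeeping of basepoints across horizontal slices, essentially following the strategy used in the knot case in \cite{HolKnot} while replacing the single $U$-variable there by the two-variable algebra $\Field[U_1,U_2]/U_1U_2 = 0$ arising from the two-strand cut. Once this identification is in place, the theorem follows by combining the pairing theorem with the definition of $\HFLa$.
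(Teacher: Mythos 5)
Your overall strategy matches the paper's: apply the pairing theorem to the slice just above the global minimum, recognize the type $A$ module for the global minimum as the renaming bimodule $[\Psi]$, and then set the remaining variables to zero. The paper does exactly this, but at the unblocked level: Theorem~\ref{thm:GeneralComputation} first establishes $\CFL(\HD)\simeq [\Psi]_{\Clg(1)}\DT \DmodAlg(\Diag^+)$ over the full polynomial ring, and Theorem~\ref{thm:ComputeHFLa} is then obtained by specializing \emph{all} of $w_1,z_1,\dots,w_\ell,z_\ell$ simultaneously, which on the type $D$ side means setting $U_1=U_2=0$ in $\DmodHat(\Diag^+)$. You instead work in the blocked (hat) category throughout; that route is also fine, since the specialization commutes with the box tensor product, but it is a small deviation from what the paper writes.

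The place where your proposal drifts is in what you flag as the ``main obstacle.'' Bookkeeping of $U$-variables at non-distinguished markers is not a real issue: in $\DmodHat$ those $w_i, z_i$ are zero by definition, and nothing needs to be ``absorbed.'' The genuine technical input hiding behind your Step 1 (if you intend the box tensor of \emph{algebraically defined} bimodules to recover $\DmodHat(\Diag^+)$, i.e. Theorem~\ref{thm:ComputeMarkedUpper}) is the identification of the holomorphically defined $DA$ bimodule of a marked minimum with the algebraic model $\MarkedMin$. That identification is only established \emph{after} tensoring with the canonical $DD$ bimodule $\CanonDD$ (Proposition~\ref{prop:ComputeDD}), which forces the paper to carry around the ``relevant'' hypothesis and run the induction of Proposition~\ref{prop:ExtendByOne}. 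None of this is actually needed for Theorem~\ref{thm:ComputeHFLa} as stated — since $\DmodHat(\Diag^+)$ is defined holomorphically, only the single pairing-theorem application to the global minimum is required — so your Step 1 is extraneous here, and by including it without the $\CanonDD$/relevance argument you have introduced a gap that the theorem itself does not force you to fill.
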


There are other versions of link Floer homology. One other variant is
a module over a polynomial algebra $\Field[U]$, associated to an
oriented link $\orL$, whose $U=0$ specialization gives $\HFLa(L)$.
This variant, and indeed some further enhancements of it, can also be
computed by our techniques.  The key point here is to develop versions
of the bimodule associated to a marked minimum, with more algebraic
structure. 

This paper is organized as follows. In Section~\ref{sec:Heegaard} we
describe the Heegaard diagrams relevant to this paper. We start by
recalling Heegaard diagrams for links, following~\cite{Links}.  We
generalize these notions, defining Heegaard diagrams associated to
marked upper diagrams. These are a slight generalization of the
Heegaard diagrams considered in~\cite{HolKnot}: the novelty here is
that now we allow for closed components, provided that they are
marked. This section also contains the corresponding generalization
for middle diagrams.  Finally, we describe the Heegaard diagram
associated to a marked link projection generalizing slightly the
Heegaard diagram of a knot projection as defined in~\cite{ClassKnots}.

In Section~\ref{sec:HFL}, we
recall various versions of link Floer homology which can be computed
using our methods. (The most general version we compute here
depends also on an additional choice of distinguished link component.)

In~\cite{HolKnot}, upper Heegaard diagrams give rise to curved type
$D$ structures over an algebra $\Clg$, while the middle diagrams can
be extended to bimodules over a larger algebra $\Blg$. We review
notation in Section~\ref{sec:Algebra}.

In Section~\ref{sec:DAmods}, we explain how to generalize the
holomorphic constructions from~\cite{HolKnot} to associate modules to
marked Heegaard diagrams.  In Section~\ref{subsec:PairingTheorem}, we
adapt the pairing theorem from~\cite{HolKnot} to the case of marked
diagrams.

In Section~\ref{sec:AlgMarkedMin}, we algebraically
define $DA$ modules, which are associated to marked minima. 
In Section~\ref{subsec:ComputeMarkedMin}, we verify that the algebraic
construction indeed agrees with the construction defined using
holomorphic methods.

In Section~\ref{sec:ComputeHFL}, we assemble the pieces to compute
link Floer homology. We obtain Theorem~\ref{thm:ComputeHFLa} as a corollary of
an algebraically enhanced version,
Theorem~\ref{thm:GeneralComputation}.

{\bf Acknowledgements.} The authors wish to thank Nate Dowlin, Robert
Lipshitz, and Andy Manion for helpful conversations.

\section{Link Floer homology}
\label{sec:HFL}

Let $L$ be an $\ell$-component link. Choose an orientation on each
component of $L$, and choose also a distinguished component. We denote
this date $\orL^\star$.  We find it convenient to label the components
$\{L_i\}_{i=1}^{\ell}$ of $L$ so that $L_1$ is the distinguished
component. 

Consider polynomial algebra whose generators are labelled
$w_1,z_1,\dots,w_\ell,z_\ell$, where we think of $w_i$ as
corresponding to the link component $L_i$ with its given orientation
and $z_i$ as corresponding to $L_i$ with its opposite orientation.

Specializing the construction from~\cite{Links} (which we recall in detail 
in Section~\ref{sec:Heegaard} below), 
there is a version of link Floer homology, which is a chain complex over
the ring
\[ \OurRing=\Field[w_1,z_1,\dots w_{\ell},z_{\ell}]/w_1 z_1=0.\]
Specifically, let $\HD$ be a $2\ell$ pointed Heegaard diagram
representing $\orL^\star$, equipped with $2\ell$ basepoints labelled
$(\wpt_1,\zpt_1,\dots,\wpt_\ell,\zpt_\ell)$ (so that $\wpt_i$, $\zpt_i$ represent
$L_i$).  We can form $\CFL(\orL^\star)$ the free $\OurRing$-module
generated by Heegaard
states, and with differential determined by
\[ \partial \x = \sum_{\y\in\Ta\cap\Tb} \sum_{\{\phi\in W(\x,\y)} \#\UnparModFlow(\phi)
\left(\prod_{i=1}^{\ell} w_i^{n_{\wpt_i}(\phi)} z_i^{n_{\zpt_i}(\phi)}\right)\cdot \y,\]
where $\UnparModFlow(\phi)$ is a moduli space of pseudo-holomorphic Whitney disks
in the homotopy class specified by $\phi$.
Let $\HFL(\orL^\star)$ denote the homology of this chain complex,
viewed as a module over $\OurRing$. 
We define relative gradings so that
if $\phi\in\pi_2(\x,\y)$, then
\begin{align*}
  \Alex(\x)-\Alex(\y)&=(n_{\zpt_1}(\phi)-n_{\wpt_1}(\phi),\dots,n_{\zpt_\ell}(\phi)-n_{\wpt_\ell}(\phi) \\
  \Mgr(\x)-\Mgr(\y)&=\Mas(\phi)-2 \sum_{\wpt_i}n_{\wpt_i}(\phi). 
\end{align*}
To give an absolute Maslov grading, we require that the specialization
of the complex to $z_1=\dots=z_\ell=1$, which has homology isomorphic
to $\Field[U]$ (where each $w_i$ acts as multiplication by $U$),
should have its generator supported in Maslov grading equal to zero.
Equivalently, if we set $w_1=\dots=w_\ell=0$ and $z_1=\dots=z_\ell=1$,
and take the homology of the resulting complex, then we obtain a
graded group which contains a non-zero, homogenous element of Maslov
grading $0$, and no homogeneous generators with positive Maslov
grading.

An absolute Alexander grading can also be specified by a certain symmetry of grid homology; cf. Equation~\eqref{eq:Symmetry} below.

The link complex above can be thought of as giving a version of the
knot Floer homology of the three-manifold obtained as large surgery on
the distinguished component. Dependence of the construction on the
distinguished component can be removed in various algebraic
specializations. For instance, one could set $w_i z_i=0$ for all
$i=1,\dots,\ell$ in $\CFL$, and then take homology to obtain an
invariant of the underlying oriented link $\orL$. Or one could form the specialization with
$z_1=\dots=z_\ell=0$. This invariant is referred to as {\em unblocked
  grid homology} (in the context of grid diagrams)
in~\cite[Chapter~11]{GridBook}.  Specializing further to
$w_1=\dots=w_\ell$ (i.e. with $z_1=\dots=z_\ell=0$) gives the {\em
  collapsed grid homology} in the terminology
of~\cite[Chapter~8.2]{GridBook}, which we denote here
$\HFLm(\orL)$. Finally, setting all $w_i=0=z_i$ for all
$i=1,\dots,\ell$ gives the complex for computing $\HFLa(\orL)$ from the
introduction (the {\em simply blocked} grid homology, in the
terminology of~\cite{GridBook}).

These specializations are perhaps more natural objects than
$\HFL(\orL^\star)$; and indeed we will typically consider the case of
$\HFLa$ and its bordered analogues as warm-ups; but we will also
consider $\HFL(\orL^\star)$, since it is the algebraically most
general construction that we can compute using our present methods.

Finally, we recall the following useful a symmetry in link Floer
homology. To this end, let $\orL$ be an oriented link, and $\orL'$ be
the oriented link obtained by reversing the orientation of the
$i^{th}$ component of $\orL$. We have the symmetry
\begin{equation}
  \label{eq:Symmetry}
  \HFLa_d(\orL,(s_1,\dots,s_\ell))\cong
  \HFLa_{d-2s_i+\kappa_i}(\orL',(s_1,\dots,s_{i-1},-s_i,s_{i+1},\dots,s_\ell)
\end{equation}
where
\[ \kappa_i=\lk (L_i,L\setminus L_i)\in 2\Z. \]
(cf.~\cite[Proposition~11.4.2]{GridBook}). This identification removes
the additive indeterminacy of the Alexander grading.

\newcommand\basvec[1]{{\mathrm e}_{#1}}
\newcommand\ws{\mathbf{w}}
\newcommand\zs{\mathbf{z}}
\newcommand\Lup{\widehat{\ell}}
\newcommand\Lin{\Ldown}
\newcommand\Oin{\Odown}
\newcommand\Xin{\Xdown}
\newcommand\Oup{\widehat{\wpt}}
\newcommand\Xdown{\widecheck{\zpt}}
\newcommand\Odown{\widecheck{\wpt}}
\newcommand\Omid{\wpt^{\parallel}}
\newcommand\Xmid{\zpt^{\parallel}}
\newcommand\Xup{\widehat{\zpt}}
\newcommand\Ldown{\widecheck{\ell}}
\section{Heegaard diagrams and marked link projections}
\label{sec:Heegaard}

In this section, we describe the Heegaard diagrams that will be used
for marked links.  In Section~\ref{subsec:HeegLinks}, we recall the
Heegaard diagrams for oriented links from~\cite{Links}.  In
Section~\ref{subsec:MarkedUpperHeegs}, we generalize these to marked
upper diagrams, which we further generalize in Section~\ref{subsec:MarkedMid} to marked middle diagrams.  In
Section~\ref{subsec:MarkedLower}, we describe marked lower diagrams.
In Section~\ref{subsec:HeegLinkProj} we describe the Heegaard diagrams
for marked link projections, generalizing the construction
from~\cite{ClassKnots}.

\subsection{Heegaard diagrams for links}
\label{subsec:HeegLinks}

Recall~\cite[Definition~3.7]{Links} that an oriented link $\orL$ can be represented by a Heegaard
diagram, 
\[ (\Sigma_0,\alphas,
\betas,\wpt_1,\zpt_1,\dots,\wpt_\ell,\zpt_\ell),\]
where:
\begin{enumerate}[label=(L-\arabic*),ref=(L-\arabic*)]
  \item $\Sigma_0$ is a genus $g$ surface with $2\ell$ boundary components,
    labelled
    \[ \wpt_1,\zpt_1,\dots,\wpt_\ell,\zpt_\ell.\]
  \item $\alphas=\{\alpha_1,\dots,\alpha_{g+\ell-1}\}$ 
    is a set of pairwise disjoint,
    embedded curves.
  \item 
    \label{A-corr}
    The surface obtained by cutting $\Sigma_0$ along
    the $\alpha$-curves has $\ell$ components
    $A_1,\dots,A_{\ell}$, each of which contains exactly one
    $\wpt$-marked boundary component and one $\zpt$-marked boundary component.
  \item $\betas=\{\beta_1,\dots,\beta_{g+\ell-1}\}$ is another
    set of  pairwise disjoint,
    embedded curves.
  \item
        \label{B-corr}
        The surface obtained by cutting $\Sigma_0$ along
        the $\beta$-curves consists of $\ell$ components
        $B_1,\dots,B_{\ell}$,
        each of which contains exactly one $\wpt$-marked boundary component
        and one $\zpt$-marked boundary component.
      \item The two one-to-one correspondences between the $\wpt$- and
        $\zpt$-boundary components specified in Parts~\ref{A-corr}
        and~\ref{B-corr} coincide; i.e. if we can label all the
        boundary components and connected components so that $A_i$ and
        $B_i$ contain $\wpt_i$ and $\zpt_i$.
\end{enumerate}
The underlying three-manifold (which in our case will be $S^3$) is
obtained by filling in the boundary components of $\Sigma_0$ with
disks to obtain a Heegaard surface $\Sigma$, equipped with attaching
circles $\alphas$ and $\betas$. This Heegaard diagram $(\Sigma,\alphas,\betas)$
specifies the three-manifold.
Thinking  of the $\wpt$ and
$\zpt$-boundaries as inducing corresponding marked points in $\Sigma$, we
can obtain a link by first connecting the $\wpt$ and $\zpt$-markings by
pairwise disjoint, embedded arcs in $\Sigma$ that are disjoint from
the $\alphas$, and pushing it slightly into the $\alpha$-handlebody;
and then connecting the $\wpt$ and $\zpt$-markings analogously in the
$\beta$-handlebody. An orientation on this link is specified by
demanding that the portion in the $\alpha$-handlebody is oriented as
arcs from $\wpt$ to $\zpt$.

(Our $\wpt$-markings $\{\wpt_1,\dots,\wpt_\ell\}$ resp. $\zpt$-markings were
denoted $\ws$ resp. $\zs$ in~\cite{Links}; 
the $\wpt$ and $\zpt$-markings correspond to the $O$-markings and $X$-markings respectively for grid diagrams~\cite{GridBook}. 
The convention for the induced orientation on the link
here is opposite to the one given in~\cite{Links}, but it is consistent
with the one given for grid diagrams in~\cite{GridBook}.)

We will consider Heegaard diagrams satisfying the following property.

\begin{defn}
  \label{def:PerDom}
  A {\em periodic domain} $P$ is a two-chain in $\Sigma_0$ with
  \[ \partial P = \sum_{i} m_i [\alpha_i] + \sum_{j} n_j[\beta_j],\]
  where $m_i,n_i\in\Z$.
  A periodic domain is called {\em somewhere
    positive} (resp. {\em negative}) if some local multiplicity is
  positive (resp. negative).
  The  Heegaard diagram is called {\em
    admissible} if every non-zero periodic domain is somewhere positive
  (and hence also somewhere negative).
\end{defn}

Note that we can think of these as two-chains in $\Sigma$ with
vanishing local multiplicity at the marked points corresponding to the
various $\wpt_i$ and the $\zpt_i$.

A {\em Heegaard state} for a $(\Sigma_0,\alphas,
\betas,\wpt_1,\zpt_1,\dots,\wpt_\ell,\zpt_\ell)$ is a $g+\ell-1$-tuple of points
$\{x_1,\dots,x_{g+\ell-1}\}$ with $x_i\in \alpha_{\sigma(i)}\cap
\beta_{i}$, where $\sigma$ is an element of the symmetric group on
$g+\ell-1$ letters.

\subsection{Marked upper Heegaard diagrams}
\label{subsec:MarkedUpperHeegs}

We modify the notion of upper Heegaard diagram
from~\cite[Section~\ref{HK:sec:Heegs}]{HolKnot} to include closed components.

\begin{defn}
  \label{def:MarkedUpperDiagram}
A {\em marked upper Heegaard diagram} is 
the following data:
\begin{itemize}
\item a surface $\Sigma_0$ of genus $g$ and
  $2n$ boundary components, labelled $Z_1,\dots,Z_{2n}$,
  and $2 \Lup$ additional boundary components labelled 
  $\Oup_1,\Xup_1,\dots,\Oup_{\Lup},\Xup_{\Lup}$.
\item a collection
  of disjoint, embedded arcs $\{\alpha_i\}_{i=1}^{2n-1}$, so that
  $\alpha_i$ connects $Z_i$ to $Z_{i+1}$,
\item a collection of
  disjoint embedded closed curves $\{\alpha^c_i\}_{i=1}^{g+\Lup}$
  (which are also disjoint from $\alpha_1,\dots,\alpha_{2n-1}$),
\item 
  another collection of embedded, mutually disjoint closed curves
  $\{\beta_i\}_{i=1}^{g+n+\Lup-1}$.  
\end{itemize}
We require this data to also satisfy the following properties:
\begin{enumerate}[label=(UD-\arabic*),ref=(UD-\arabic*)]
\item For each $i\in\{1,\dots,2n-1\}$, $j\in \{1,\dots,g+\Lup\}$, and $k\in\{1,\dots,g+\Lup+n-1\}$,
$\alpha_i$ and $\alpha_j^c$ curves are transverse to $\beta_k$.
\item
Both sets of $\alpha$-and the
$\beta$-circles consist of homologically linearly
independent curves (in $H_1(\Sigma_0)$).
\item 
  \label{UD:TwoBoundariesApieceB}
  Each component $B_1,\dots,B_{n+\Lup}$ of the surface obtained by cutting
  $\Sigma_0$ along $\beta_1,\dots,\beta_{g+\Lup+n-1}$, is required to
  contain exactly two boundary components, which are either both of type $Z$,
  or one is 
  of type $\zpt$ and the other is of type $\wpt$.
\item
  \label{UD:TwoBoundariesApieceA}
  Each component $A_1,\dots,A_{\Lup+1}$ of the the surface obtained by
  cutting $\Sigma_0$ along $\alpha^c_1,\dots,\alpha^c_{g+\Lup}$, is
  required to either contain all the $Z$-components, or exactly one of
  the $\wpt$-components and one of the $\zpt$-components.
\item
  \label{UD:LComponentLink}
  Condition~\ref{UD:TwoBoundariesApieceA} gives a one-to-one
  correspondence between the $\zpt$-boundaries and the $\wpt$-boundaries;
  Condition~\ref{UD:TwoBoundariesApieceB} gives another one-to-one
  correspondence between the $\zpt$-boundaries and the $\wpt$-boundaries.
  We require that these two correspondences coincide.
\item
  \label{UD:MinPerDom}
  Together, the closed curves
  $\{\alpha_i^c\}_{i=1}^{g+\Lup}$ and
  $\{\beta_i\}_{i=1}^{g+\Lup+n-1}$
  span a $2g+\Lup+n-1$-dimensional subspace of $H_1(\Sigma_0;\Z)$.
\end{enumerate}
\end{defn}

\begin{rem}
  An upper diagram specifies a three-manifold-with-boundary $Y$
  equipped with an $\Lup$-component, oriented link, and whose boundary
  is a sphere containing an embedded collection of $2n$ arcs.
  (Condition~\ref{UD:LComponentLink} ensures that the oriented link
  indeed has $\Lup$ components; Condition~\ref{UD:MinPerDom} ensures
  that $H_1(Y;\Z)=0=H_2(Y;\Z)$.
  Compare~\cite[Proposition~2.15]{HolDisk}.)  When $g=0$, the
  three-manifold $Y$ is a three-ball.
\end{rem}

Condition~\ref{UD:TwoBoundariesApieceA} gives a matching $\Matching$
on $\{1,\dots,2n\}$ (a partition into two-element subsets), where
$\{i,j\}\in \Matching$ if $Z_i$ and $Z_j$ can be connected by a path
that does not cross any $\beta_k$.

We will typically abbreviate the data
\[
  \Hup =(\Sigma_0,\{Z_1,\dots,Z_{2n}\},\{\Oup_1,\dots,\Oup_{\Lup}\},
  \{\Xup_1,\dots,\Xup_{\Lup}\}, \]
\[\hskip1in\{\alpha_1,\dots,\alpha_{2n-1}\},\{\alpha^c_1,\dots,\alpha^c_{g+\Lup}\},
  \{\beta_1,\dots,\beta_{g+\Lup+n-1}\}),
\]
and let $\Matching(\Hup)$ be the induced matching on $\{1,\dots,2n\}$.

For upper diagrams, there are two natural generalizations of the notion of
periodic domains and admissibility.

\begin{defn}
  A {\em periodic domain} is a two-chain in
  $\Sigma_0$
  \[ \partial P = \sum_{i} m_i [\alpha^c_i] + \sum_{j} n_j[\beta_j],\]
  with $m_i,n_i\in\Z$.
  A diagram is called {\em admissible} if every non-zero periodic
  domain has positive and negative local multiplicities.
\end{defn}

Viewed as two-chains in $\Sigma$, rather than $\Sigma_0$, the periodic
domains we consider here have vanishing multiplicities at the
punctures corresponding to  $\Zout$, $\zpt$, and $\wpt$.  In
the terminology of~\cite{Bimodules}, these are called {\em provincial
  periodic domains}.

To check admissibility, it helps to have the following observation.
Number the connected components $\{A_1,\dots,A_{\Lup}\}$ as in
Condition~\ref{UD:TwoBoundariesApieceA} so that $A_i$ contains the
markings $\wpt_i$ and $\zpt_i$ specifying $L_i$; number the components
$\{B_1,\dots,B_{\Lup+n-1}\}$ so that their first $\Lup$ components are
also labelled so that $B_i$ contains $\wpt_i$ and $\zpt_i$. Then, it
is easy to see that the periodic domains are spanned by the regions
$A_i-B_i$.

When $\Lup=0$, the corresponding link is empty, and we are considering
the upper diagrams from~\cite{HolKnot}. When $n=0$, we are considering
Heegaard diagrams for links as in Section~\ref{subsec:HeegLinks}.
In particular, when
$n=0$ and $\ell=1$, we can think of these as the double-pointed knot
diagrams from~\cite{HolKnot}.

\begin{defn}
\label{def:HeegaardState}
A {\em Heegaard state} for a marked upper diagram $\Hup$ is a
$d=g+\Lup+n-1$-tuple of points $\x$ in the intersection of the various $\alpha$-and $\beta$-curves,
distributed so that each $\beta$-circle contains exactly one point in $\x$,
each $\alpha$-circle contains exactly one point in $\x$, and no more than 
one point lies on any given $\alpha$-arc.
\end{defn}

For an upper Heegaard diagram with $2n$ outputs, and a Heegaard state $\x$,
we let
\[\alpha(\x)=\{i\in \{1,\dots,2n-1\}\big| \x\cap \alpha_i\neq \emptyset\}.\]

\subsection{Marked middle diagrams}
\label{subsec:MarkedMid}

We will enlarge marked upper diagrams by gluing them to marked middle diagrams.

\begin{defn}
  A {\em marked middle diagram} is the following data:
  \begin{itemize}
  \item a surface $\Sigma_0$ of genus $g$ and $2m$ boundary
    components, which we think of as {\em incoming boundary
      components}, labelled $\Zin_1,\dots,\Zin_{2m}$,
    $2n$ boundary components, which we think of as {\em outgoing boundary
      components}, labelled $\Zout_1,\dots,\Zout_{2n}$
    and $2 \Lmid$ additional
    boundary components labelled
    \[ \Omid_1,\Xmid_1,\dots,\Omid_{\Lmid},\Xmid_{\Lmid}.\]
  \item a collection
    of disjoint, embedded arcs $\{\alphain_i\}_{i=1}^{2m-1}$, so that
    $\alphain_i$ connects $\Zin_i$ to $\Zin_{i+1}$,
  \item a collection
    of disjoint, embedded arcs $\{\alphaout_i\}_{i=1}^{2n-1}$
    (which are also disjoint from the $\alphain_j$), so that
    $\alphaout_i$ connects $\Zout_i$ to $\Zout_{i+1}$
  \item a collection of
    disjoint embedded closed curves $\{\alpha^c_i\}_{i=1}^{g+\Lmid}$
    (which are also disjoint from $\alphain_i$ and $\alphaout_j$)
  \item 
    another collection of embedded, mutually disjoint closed curves
    $\{\beta_i\}_{i=1}^{g+m+n+\Lmid-1}$.  
\end{itemize}
We require this data to also satisfy the following properties:
\begin{enumerate}[label=(MD-\arabic*),ref=(MD-\arabic*)]
\item All the $\alpha$-arcs and curves are transverse to the various $\beta_k$.
\item
Both sets of $\alpha$-and the
$\beta$-circles consist of homologically linearly
independent curves (in $H_1(\Sigma_0)$).
\item
  \label{MD:NoPerDom}
  The span of the homology classes of the
  curves $\{\alpha_i^c\}_{i=1}^{g+\Lmid}$ is linearly independent of the span of
  $\{\beta_i\}_{i=1}^{g+\Lmid+m+n-1}$ in $H_1(\Sigma_0;\Z)$.
\item 
\label{MD:TwoBoundariesApieceA}
The surface obtained by cutting $\Sigma_0$ along
$\beta_1,\dots,\beta_{g+\Lmid+m+n-1}$, which has $m+n+\Lmid$ connected
components $B_1,\dots, B_{m+n+\Lmid}$, is required to contain exactly
two boundary components in each component, so that 
each $\wpt$-boundary is paired with a $\zpt$-boundary.
\setcounter{bean}{\value{enumi}}
\item
  \label{MD:TwoBoundariesApieceB}
  Each component $A_1,\dots,A_{\Lmid+1}$
  of the surface obtained by cutting $\Sigma_0$ along
  $\alpha^c_1,\dots,\alpha^c_{g+\Lmid}$
  is required to contain either all the $Z$-components,  or
  exactly one of the $\wpt$-component and one $\zpt$-component.
  We call the induced one-to-one correspondence between the 
  $\wpt$- and $\zpt$-boundary components the {\em $\alpha$-matching}.
\end{enumerate}
\end{defn}

Sometimes, we abbreviate the data of a middle diagram $\Hmid$.
Moreover, we will write  $\dIn \Hmid=\Zin_1\bigcup \dots\bigcup \Zin_{2m}$
$\dOut \Hmid=\Zout_1\bigcup \dots\bigcup \Zout_{2n}$.

\begin{defn}
  \label{def:BetaMatching}
The $B_1,\dots,B_{m+n+\Lmid}$ induce a matching $\Mmid$ on the boundary
components
\[ \{\Zin_1,\dots,\Zin_{2m},\Zout_1,\dots,\Zout_{2n},
\Omid_1,\dots,\Omid_{\Lmid},\Xmid_1,\dots,\Xmid_{\Lmid}\},\]
called the {\em $\beta$-matching}.
\end{defn}

\begin{defn}
  \label{def:DA-Periodic-Domain}
  A {\em periodic domain} is a relative two-chain for $(\Sigma_0,\Zin)$ with
  \[ \partial P =   \sum_{i} m_i [\alpha^c_i] + \sum_{j} n_j[\beta_j],\]
  with $m_i,n_i\in\Z$,
  satisfying for all $\{i,j\}\in\MatchIn$,
  \[ \weight_{\Zin_i}(P)=\weight_{\Zin_j}(P). \] A diagram is called
  {\em admissible} if every non-zero periodic domain has both positive and
  negative local multiplicities.
\end{defn}

\begin{defn}
  \label{def:Compatible}
  A matching $\MatchIn$ on $\{\Zin_1,\dots,\Zin_{2m}\}$ is said to be {\em
    compatible} with $\Mmid$ if the equivalence relation generated by
  $\MatchIn$ and $\Mmid$ has the property that each equivalence class has
  either exactly one $\wpt$-marking and exactly one $\zpt$-marking, or it
  has exactly two $\Zout$-markings, and this induced matching of the $\wpt$
  and $\zpt$-markings coincides with the one-to-one $\alpha$-matching, as
  defined in Condition~\ref{MD:TwoBoundariesApieceB}.
\end{defn}

Geometrically, we can associate a one-manifold $\Wmid$ with
decorations to a middle diagram. Start from the zero-manifold whose
points correspond to the boundary components of $\Sigma_0$.  To each
component $B_j$ we associate an arc connecting the two boundary
components.  To each component $A_i$ connecting $\wpt_i$ and $\zpt_i$, we
associate an arc connecting $\wpt_i$ to $\zpt_i$, which we call a {\em marked
arc}.  To the incoming matching $\Mdown$,
we associate another one-manifold $\Wup$, consisting of arcs
connecting the pairs of points in the matching. The compatibility
condition can be phrased as follows: each closed component of
the one-manifold $\Wmid\cup\Wup$ contains exactly one marked arc.

We will need the following notion of a compatible orientation on
$\Wmid\cup\Wup$ to define the bimodule associated to a middle diagram
and an incoming matching.

\begin{defn}
  \label{def:OneManifoldWithMarkings}
  A {\em one-manifold with markings} is a one-manifold
  $W$, equipped with a set of embedded arcs, each of whose endpoints are
  labelled with an $\zpt$ and an $\wpt$. A {\em compatible orientation on the
    one-manifold with markings} is an orientation of $W$ whose
  restriction to each marked arc orients it as a path from the
  $\wpt$-marking to the $\zpt$-marking. We will use these compatible
  orientations to define the bimodule associated to a middle diagram.
\end{defn}

The definition of Heegaard states (Definition~\ref{def:HeegaardState})
generalizes immediately to the case of middle diagrams, with the
understanding that now the number of $\beta$-circles is given by
$d=g+\Lmid+m+n-1$.

For a Heegaard state $\x$ in a middle diagram, let
\begin{align*}
  \alphain(\x)&=\{i\in \{1,\dots,2m-1\}\big| \alphain_i\cap \x\neq
  \emptyset \}\\ \alphaout(\x)&=\{i\in \{1,\dots,2n-1\}\big|
  \alphaout_i\cap \x\neq \emptyset\}.
\end{align*}

\begin{defn}
  \label{def:ExtendedMiddle}
An {\em extended marked middle diagram} is a middle diagram,
together with two new boundary components, $\Zmid_0$ and $\Zmid_1$, four new
$\alpha$-arcs, $\alphain_{0}$, $\alphain_{2m}$,  $\alphaout_0$,
and $\alphaout_{2n}$, and two new 
$\beta$-circles $\beta_0$ and $\beta_0'$,
arranged so that
\begin{itemize}
  \item $\alphain_{0}$ connects
    $\Zin_{1}$ to $\Zmid_0$,
  \item $\alphaout_{0}$ connects $\Zout_{1}$ to
    $\Zmid_0$,
  \item $\beta_0$ separates $\Zmid_0$ from all the other
    boundary components of $\Sigma_0$,
  \item $\alphain_{2n}$ connects $\Zin_{2m}$ to $\Zmid_1$
  \item $\alphaout_{2m}$ connects $\Zout_{2n}$ to $\Zmid_1$
  \item $\beta_0'$ separates $\Zmid_1$ from all the other boundary components
    of $\Sigma_0$.
  \item
    if we fill in $\Zmid_0$ and $\Zmid_1$ with disks,
    and delete the newly introduced arcs and circles,
    the result is a middle diagram.
\end{itemize}
\end{defn}

For an extended middle diagrams, 
$\alphain(\x)\subset \{0,\dots,2m\}$ and
$\alphaout(\x)\subset \{0,\dots,2n\}$.

(Extended marked middle diagrams are the natural adaptation
of the notion of extended middle diagrams as defined in~\cite{HolDisk}.)

\subsection{Marked lower diagrams}
\label{subsec:MarkedLower}

\begin{defn}
  A {\em marked lower diagram} is a marked middle diagram with no
  outgoing boundary components (or arcs), and one additional $\beta$
  circle, i.e. the $\beta$ circles are numbered
  $\{\beta_i\}_{i=1}^{g+n+\Ldown}$, with $n>0$, together with one distinguished
  $\wpt$-$\zpt$-pair, $\Odown_1$, $\Xdown_1$, so that
  Conditions~\ref{MD:TwoBoundariesApieceA}
  and~\ref{MD:TwoBoundariesApieceB} are replaced by the following:
  \begin{enumerate}[label=(LD-\arabic*),ref=(LD-\arabic*)]
      \setcounter{enumi}{\value{bean}}
  \item The surface obtained by cutting $\Sigma_0$ along
    $\beta_1,\dots,\beta_{g+\Ldown+n}$, has $n+1$ components
    $B_1,\dots, B_{n+\Ldown+1}$, each of which contains exactly two boundary components,
    which can be of the following types:
    \begin{itemize}
      \item $B_i$ contains $\wpt_j$ and $\zpt_j$ with $j\neq 1$
      \item $B_i$ contains $\wpt_1$ and a component of type $Z$
      \item $B_i$ contains $\zpt_1$ and a component of type $Z$
      \item $B_i$ contains two boundary components of type $Z$.
        \end{itemize}
  \item Each component $A_1,\dots,A_{\Ldown}$ of the surface obtained
    by cutting $\Sigma_0$ along $\alpha^c_1,\dots,\alpha^c_{g+\Ldown}$
    is required to contain either all the $Z$ components and $\Odown_1$ and $\Xdown_1$;
    or exactly one $\Odown_j$ and exactly one $\Xdown_j$ (for $j\neq 1$).
    \end{enumerate}
\end{defn}

For a lower diagram with boundary
$\Zin_1,\dots,\Zin_{2m},\Oin_1,\Xin_1,\dots,\Oin_{\Lin},\Xin_{\Lin}$,
we can define the associated matching $\Matching$ (exactly as in
$\Mmid$ for middle diagrams).  Similarly, given a matching $\MatchIn$
on $\{\Zin_1,\dots,\Zin_{2m}\}$, we can define a compatibility
condition with $\Matching$ as in Definition~\ref{def:Compatible}.
Letting $W$ be the marked one-manifold associated to the lower
diagram, we can form the marked one-manifold $W\cup\Wup$.  For lower
diagrams, the orientation on $W\cup\Wup$ is uniquely specified by the
compatibility condition of
Definition~\ref{def:OneManifoldWithMarkings}.

\subsection{Gluing diagrams}

If $\Hup$ is an upper diagram, and $\Hmid$ is a middle diagram, and an
identification $\partial\Hup=\dIn\Hmid$, we can form a new upper
diagram $\Hup\cup_{\Zin} \Hmid$. 
For example, if $\{\Oup_1,\dots,\Oup_{\Lup}\}$ and 
$\{\Omid_1,\dots,\Omid_{\Lmid}\}$
resp.
$\{\Xup_1,\dots,\Xup_{\Lup}\}$ and 
$\{\Xmid_1,\dots,\Xmid_{\Lmid}\}$
denote the $\wpt$-marked resp. $\zpt$-marked
boundaries of $\Hup$ and $\Hmid$, then 
the $\wpt$- and $\zpt$-marked boundaries of 
$\Hup\cup_{\Zin} \Hmid$ are
\[ \{\Oup_1,\dots,\Oup_{\Lup},\Omid_1,\dots,\Omid_{\Lmid}\}
\qquad{\text{and}}\qquad
\{\Xup_1,\dots,\Xup_{\Lup},\Xmid_1,\dots,\Xmid_{\Lmid}\}.\]

If $\Hup$ and $\Hmid$ are admissible,
then the glued diagram is admissible, as well.

\subsection{Heegaard diagrams from link projections}
\label{subsec:HeegLinkProj}

A knot projection has a naturally associated doubly-pointed Heegaard
diagram, as described in~\cite{AltKnots}, studied further
in~\cite{HolKnot}. We must modify this algorithm to take into account
marked edges (which are treated slightly differently from the
distinguished edge). The construction is summarized in
Figure~\ref{fig:LinkHeeg}.

\begin{figure}[h]
 \centering
 \input{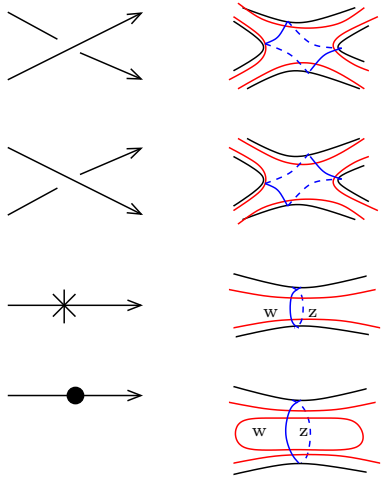}
 \caption{{\bf Heegaard diagram for a marked minimum.}}
 \label{fig:LinkHeeg}
 \end{figure}

The resulting diagram is called the {\em small Heegaard diagram associated
  to a marked link projection}.

\begin{prop}
  \label{prop:KauffLink}
  There is a $2^{\ell-1}$-to-one correspondence between Heegaard
  states in the above diagram with the Kauffman states
  of~\cite{Kauffman}. The Maslov and Alexander gradings are given by
  multiplying the monomial local contributions as in
  Figure~\ref{fig:KauffLink}.
\end{prop}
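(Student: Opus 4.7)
The plan is to reduce to the known knot case and analyze the local contribution coming from each non-distinguished marker. Recall that for a singly-pointed (doubly-pointed in our current language) Heegaard diagram built from a knot projection as in~\cite{AltKnots,ClassKnots}, there is a natural bijection between Heegaard states and Kauffman states, under which the local weight assignment at each crossing recovers the Maslov and Alexander contributions of Figure~\ref{fig:KauffLink}. I would first spell out this bijection in our notation: at each crossing, the four quadrants correspond to the four possible $\alpha\cap\beta$ intersection points inside the local piece of the Heegaard diagram, and each Kauffman state picks out one quadrant per crossing subject to the incidence rules of~\cite{Kauffman}. The three non-crossing $\alpha$-circles (one for each edge of the projection, once two distinguished regions are removed) pin down the remaining coordinates of a Heegaard state uniquely, producing the bijection in the $\ell=1$ case.

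Next, I would isolate the modification introduced by a marked (non-distinguished) minimum. The local Heegaard diagram for such a minimum (Figure~\ref{fig:LinkHeeg}) consists of a small piece with an extra $\alpha^c$-circle and an extra $\beta$-circle, carrying the basepoints $\wpt$ and $\zpt$; these curves intersect transversally in exactly two points. I would check, purely locally, that neither of these two intersection points contributes any monomial to the weight in Figure~\ref{fig:KauffLink} (they both sit away from crossings), and that the rest of the diagram still needs to be completed by one point on each remaining $\alpha$- and $\beta$-circle, i.e.\ by a Kauffman state in the sense of~\cite{Kauffman} for the underlying projection. Putting these local analyses together over the $\ell-1$ non-distinguished markers gives a $2^{\ell-1}$-to-one map from Heegaard states to Kauffman states.

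To verify the grading formula, I would note that both the Maslov and Alexander gradings are defined relatively through domains $\phi\in\pi_2(\x,\y)$, so it suffices to check that as one varies over Heegaard states the relative gradings agree with those obtained by multiplying the Figure~\ref{fig:KauffLink} contributions. At each crossing, this reduces to computing the $\Mas$ and $n_{\wpt_i},n_{\zpt_i}$ values of the small bigons and rectangles connecting the four local intersection points; the computation is the same as in the knot case, and the contributions exactly reproduce the monomials in Figure~\ref{fig:KauffLink}. The two local choices at each marked minimum are connected by a small bigon containing neither $\wpt_i$ nor $\zpt_i$ and of Maslov index zero, so the $2^{\ell-1}$ states in a single fiber share the same bigrading.

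The main obstacle is the bookkeeping at the marked minima: verifying that the extra bigon there carries no $\wpt,\zpt$ multiplicities and zero Maslov index (so the two choices are truly grading-equivalent), and confirming that attaching such a local piece is compatible with the global identification between Heegaard states and Kauffman states (in particular, that the required incidence conditions for Kauffman states at the two regions adjacent to each marker still match what the Heegaard diagram forces). Once this local-to-global compatibility is established, both the counting and the grading assertions follow from the corresponding statements at each individual crossing and each individual marker.
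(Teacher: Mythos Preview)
Your overall architecture---bootstrap from the knot case at the crossings and analyze the marked minima separately---is sound, and the paper follows a similar path (deferring the details to Proposition~\ref{prop:KauffLink2}, where the admissible diagram is treated and gradings are pinned down via clock moves and handleslides).  However, your local analysis at a non-distinguished marked minimum is wrong in a way that undermines the grading claim.

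At each such minimum the extra $\alpha^c$- and $\beta$-circles do meet in two points, but the two bigons between them are \emph{not} basepoint-free: one contains $\wpt_i$ and the other contains $\zpt_i$.  Each bigon has Maslov index~$1$, not~$0$.  Computing with the bigon through $\zpt_i$ gives
\[
\Mgr(b)-\Mgr(a)=1,\qquad \Alex(b)-\Alex(a)=[\mu_i],
\]
so the two local states differ in both Maslov and Alexander grading.  Thus the $2^{\ell-1}$ Heegaard states lying over a single Kauffman state do \emph{not} share a common bigrading; each marked minimum contributes a factor of the form $q^{-1}s_i^{-1/2}+s_i^{1/2}$ (exactly as in the northern quadrant of Figure~\ref{fig:KauffAdmissMarker}).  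Your assertion that the fiber is grading-homogeneous, and the bigon computation supporting it, are the genuine gap.  The fix is to redo that local computation correctly and record the two-term contribution at each marker; the paper carries this out carefully in the proof of Proposition~\ref{prop:KauffLink2}.
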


The above proposition follows quickly from the methods from~\cite{AltKnots}.
We will see it as an easy consequence of
Proposition~\ref{prop:KauffLink2} below.

There is a basis for the periodic domains for the link projection
which correspond to the non-distinguished components of the link:
their local multiplicity is $1$ in the portion of the Heegaard diagram
around the given link component. Thus, the diagram constructed above is
not admissible. To construct an admissible diagram, we will need some
additional data.

\begin{defn}
  \label{def:Relevant}
  A marked link diagram is called {\em relevant} if the following two properties
hold:
\begin{itemize}
  \item the marking on each component occurs
    at the minimum of the height function restricted to the link components
  \item the distinguished marking occurs at the global minimum of the height function.
  \item each minimum is leftmost; i.e. the intersection of the
    horizontal line through each minimum meets the rest of the link to
    the right of the minimum.
\end{itemize}
\end{defn}

The first two properties are  the canonical marking of the diagram discussed
in the introduction; the second property implies that 
the markings are adjacent to the infinite (marked) region.

Fix a relevant link diagram, connect the marking on some component to
the adjacent strand by a dotted arc.  We then isotope each such dotted
arc which is otherwise disjoint from the knot projection. We shrink
this dotted arc, to form an {\em admissibility marker}.
Crossings in the diagram and admissibility markers together can be thought of as {\em generalized crossings}.
In particular, at each admissibility marker, there are also
four adjacent regions, which we think of as $N$, $S$, $W$, and
$E$. We mark these so that the $S$  region is part of the infinite region, and the
short dotted arc runs $W$ to $E$, as shown in
Figure~\ref{fig:AdmissMarker}.

\begin{figure}[h]
 \centering
 \input{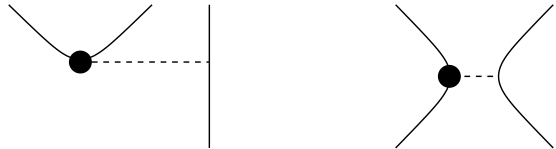}
 \caption{{\bf Admissibility markers.}  A portion of the canonically
   marked diagram on the left is decorated by a dotted arc as shown,
   and isotoped to the picture on the right.}
 \label{fig:AdmissMarker}
 \end{figure}

The regions in the diagram will be thought of as the connected
components of the complement of the diagram, together with the
$\ell-1$ dotted arcs at each marking. Two of these regions are distinguished by being
adjacent to the distinguished edge.

We extend the notion of Kauffman states for relevant diagrams, as
follows.  Once again, a Kauffman state associates one of the four
adjacent regions at each crossing.  Moreover, our extended Kauffman
states also can associate one of the two adjacent regions at each
admissibility marker: these regions can be $N$ or $W$ (but not $E$ or
$S$).  Our Kauffman state is constrained by the requirement that there
is a one-to-one correspondence between generalized crossings and the
indistinguished regions in the diagram.

\begin{figure}[h]
 \centering
 \input{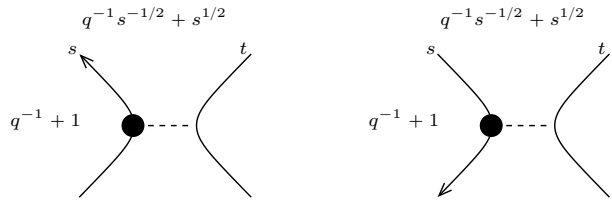}
 \caption{{\bf Local Kauffman states at an admissibility arc.} 
   There are two local Kauffman states ($N$ and $W$) that can appear
   in a generalized Kauffman state near an admissibility marking.
   These contribute the displayed monomial in $q$ and $s$..}
 \label{fig:KauffAdmissMarker}
 \end{figure}

To construct the {\em Heegaard diagram associated to the relevant link
  projection}, we associate the piece of diagram from
Figure~\ref{fig:AdmissHeeg} to each admissibility marker (and the
pieces associated to crossings and specially marked edge as in
Figure~\ref{fig:LinkHeeg}).

\begin{figure}[h]
 \centering
 \input{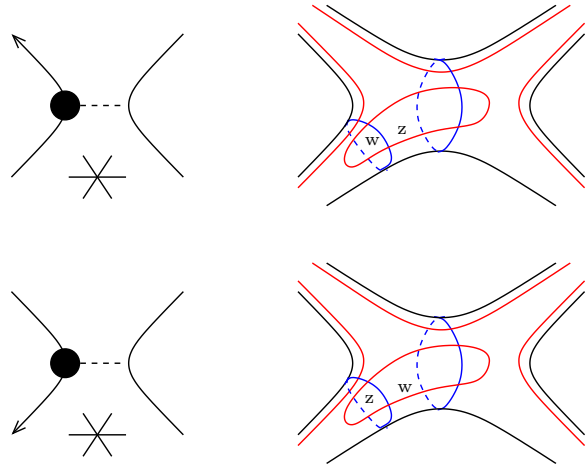}
 \caption{{\bf Heegaard diagram for an admissibility arc.}  
   The admissibility marker at the left is replaced by the piece
   of Heegaard diagram on the right. (The star is drawn to remind
   the reader that the Southern quadrant is adjacent to the distinguished edge.)
 }
 \label{fig:AdmissHeeg}
\end{figure}

For a relevant link diagram, we will identify the Kauffman states with
the Heegaard states of the associated link diagram
(c.f. Proposition~\ref{prop:KauffLink2} below), in a way that respects
the the Maslov and Alexander gradings. Our verification use methods of
Kauffman~\cite{Kauffman}, suitably adapted.

\begin{figure}[h]
 \centering
 \input{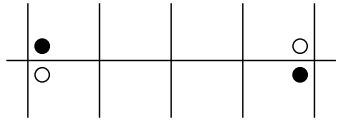}
 \caption{{\bf Schematic of two Kauffman states (black and white)
     connected by a clock move.}  
   The two Kauffman states are required to agree at all other crossings;
   the edges in the horizontal line are required not adjacent to either
   distinguished region.
 }
 \label{fig:ClockMove}
\end{figure}

Specifically, recall that two Kauffman states $\x$ and $\y$ are
connected by a {\em clock move} if there are two crossings where $\x$
and $\y$ are different and the same everywhere else, and there is a
path of edges connecting those two crossings which is never adjacent
to one of the two distinguished regions, see for example 
Figure~\ref{fig:ClockMove}.  According to Kauffman~\cite{Kauffman}, any
two Kauffman states for a connected projection can be connected by a
sequence of clock transformations.  We need the following variant of
this fact:

\begin{defn}
  Let $P$ be a knot diagram with a distinguished crossing $x$, whose
  four quadrants are labelled $N,S,E,W$.  The {\em $NS$ resolution of
    $P$ at $x$} is the diagram obtained from $P$ by resolving the
  crossing so that the $N$ and $S$ become connected to one another.
  The {\em $EW$ resolution} is the other resolution.
\end{defn}

\begin{defn}
  Let $C$ be a set of crossings of an oriented marked planar diagram,
  and mark the quadrants of the projection at each $c\in C$ as $N$,
  $W$, $E$, $S$.  We say that a Kauffman state is of type $C$-NW, if
  at each $c\in C$, the Kauffman state is in the $N$ quadrant or the
  $W$-quadrant.
\end{defn}

\begin{defn}
  A set $S$ of Kauffman states is called {\em clock connected} if any two
  elements of $S$ can be connected by a sequence of clock transformations,
  each of which connects two elements of $S$.
\end{defn}

In this language, Kauffman's clock theorem states that for a connected
projection, the set of all Kauffman states is non-empty and
clock-connected.

\begin{prop}
  \label{prop:ClockConnected}
  Fix an oriented planar diagram $P$ Let $P$ be an oriented marked
  planar diagram, and choose a collection $C$ of crossings in $P$,
  which are all adjacent to one region $R$, which is one of the two
  distinguished regions in $P$. Label the quadrants at each $c\in C$
  so that the quadrant contained in $R$ is labelled with an $S$.  
  Suppose that the diagram $P'$ obtained as the $NS$ resolution of $P$
  at each $c\in C$ is connected.
  Then, the set of Kauffman states of type $C$-NW is clock-connected.
\end{prop}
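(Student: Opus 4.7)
The plan is to reduce the proposition to Kauffman's classical clock theorem applied to the connected diagram $P'$. The key inputs are (a) a geometric analysis of how the $NS$-resolution affects the regions, (b) a correspondence between a distinguished subset of $C$-$NW$ states and Kauffman states of $P'$, and (c) a toggling argument showing any $C$-$NW$ state can be reached from this subset.

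First, I would analyze the region structure. At each $c \in C$, the $NS$-resolution merges $N_c$ with $S_c \subset R$ (while leaving $W_c$ and $E_c$ as separate regions); by an Euler-characteristic count, $P'$ has exactly $|C|$ fewer regions than $P$, so the non-distinguished regions of $P'$ are naturally identified with those of $P$ minus $\{N_c : c \in C\}$. Call a $C$-$NW$ state $s$ of $P$ \emph{canonical} if $s(c) = N_c$ for every $c \in C$; then restriction to $Cr(P) \setminus C$ gives a bijection between canonical $C$-$NW$ states of $P$ and Kauffman states of $P'$.

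Next, since $P'$ is connected, Kauffman's clock theorem gives that Kauffman states of $P'$ are clock-connected. A clock move in $P'$ at two crossings $x_1, x_2$ with connecting path avoiding the distinguished regions of $P'$ (which are the image of $R$, now enlarged to absorb every $N_c$, together with the other distinguished region $R^*$) lifts to a clock move in $P$ between the two corresponding canonical $C$-$NW$ states: the lifted path avoids $R$ and all $N_c$ in $P$, and the move does not involve any crossing in $C$, so it stays within $C$-$NW$ states. This shows canonical $C$-$NW$ states are clock-connected.

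The main obstacle, and the core of the proof, is to show that every $C$-$NW$ state is clock-connected to a canonical one. Given $s$ with $s(c_0) = W_{c_0}$ for some $c_0 \in C$, there is a unique crossing $x \in Cr(P) \setminus \{c_0\}$ with $s(x) = N_{c_0}$, and the objective is a sequence of clock moves migrating the $N_{c_0}$-assignment to $c_0$ while relocating $c_0$'s former $W_{c_0}$-assignment, all within $C$-$NW$. The crucial geometric observation is that the region $N_{c_0}$ lies entirely on the $N$-side of $c_0$, hence is disjoint from $R$, so clock moves traced along $\partial N_{c_0}$ automatically satisfy the non-adjacency-to-$R$ condition; the $E_{c_0}$-quadrant is likewise avoided since it lies on the opposite side of $c_0$ from $N_{c_0}$. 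A Kauffman-style argument on the cycle $\partial N_{c_0}$, together with the fact that $P'$ is connected (which controls the relevant sub-diagram obtained by cutting along $\partial N_{c_0}$), produces the required sequence. Iterating this procedure over each $c \in C$ for which $s(c) = W_c$ reduces $s$ to a canonical state. The subtle point to verify is that successive toggles at distinct $c, c' \in C$ do not interfere, which follows because each toggle operates in a region ($N_c$ or $N_{c'}$) that is disjoint from the other in any configuration where both are needed.
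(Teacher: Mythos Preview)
Your step (c)—connecting an arbitrary $C$-$NW$ state to a canonical one—has a genuine gap. The ``crucial geometric observation'' that $N_{c_0}$ is disjoint from $R$ is correct as a statement about open regions, but it does not imply that edges on $\partial N_{c_0}$ are non-adjacent to $R$: the regions $N_{c_0}$ and $R$ meet only at the vertex $c_0$ locally, yet nothing prevents them from sharing an edge elsewhere in the diagram, and you never address adjacency to the other distinguished region $R^*$ at all. More fundamentally, the promised ``Kauffman-style argument on the cycle $\partial N_{c_0}$'' is never specified. Even granting a suitable path, a single clock move at $(c_0,x)$ rotates $s(x)$ to an adjacent quadrant; if $x\in C$ this could land in $E_x$, immediately leaving the $C$-$NW$ set. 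Your non-interference claim is likewise unjustified: nothing in the hypotheses rules out $N_c=N_{c'}$ for distinct $c,c'\in C$, in which case canonical states do not exist and the region count in (a) is already wrong.

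The paper sidesteps all of this by inducting on $|C|$, resolving one crossing at a time. For a fixed $c\in C$, the $C$-$NW$ states of type $N$ at $c$ correspond to $(C\setminus\{c\})$-$NW$ states of the single resolution $P_c$, clock-connected by the inductive hypothesis. The only remaining case is a state of type $W$ at \emph{every} $c\in C$; here one applies Kauffman's clock theorem to $P$ itself (not to any resolution) to produce a clock path to some state not of this form. The first state along that path where some $c$ changes must change to type $N$---type $S$ is impossible since $S_c\subset R$ is distinguished---and all earlier states on the path are still all-$W$, hence in $C$-$NW$. This argument uses the global clock theorem as a black box and never attempts to construct explicit local moves, which is precisely where your approach runs into trouble.
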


\begin{proof}
  We prove this by induction on the number of elements in $C$.
  If $C$ is empty, then the theorem follows from Kauffman's Clock theorem
  applied to $P$.

  The Kauffman states that are of type $N$ at $c\in C$ correspond to
  Kauffman states in the $NS$ resolution of $P_c$ of $P$ at $c$, and
  the clock transformations between such Kauffman states correspond to
  clock transformations in $P_c$.  Thus, it follows by the inductive
  hypothesis, the set of Kauffman states of type $C$-$NW$ for which at
  least one crossing in $C$ is of type $N$ is clock-connected.

  Consider any Kauffman state of type $C$-$W$. Since the Kauffman
  states for $P$ are clock-connected (Kauffman's theorem again), for
  any Kauffman state $x_1$ of type $C$-$W$, we can find a sequence
  $x_1,\dots,x_m$ of Kauffman states so that $x_i$ is connected to
  $x_{i+1}$ by a clock move, and $x_m$ is not of type
  $C$-$W$. Consider the smallest $i$ for which $x_i$ is not of type
  $C$-$W$. This means that there is one $c\in C$ so that $x_i$ is not
  of type $W$ at $c$, but $x_{i-1}$ is of type $W$ at $c$. Since
  the $S$ quadrant is distinguished, It follows
  that $x_i$ must be of type $N$ at $c$. 
\end{proof}

For a relevant diagram, we consider Kauffman states, now connected generalized Kauffman states.
In particular, at the admissibility markers, these states are required to occupy the $N$ and the $W$ quadrants only.

\begin{cor}
  For a relevant link diagram which is connected in the complement of
  the edge markers, the set of generalized Kauffman states is
  clock-connected.
\end{cor}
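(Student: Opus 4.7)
The plan is to obtain this as a direct application of Proposition~\ref{prop:ClockConnected}, with $C$ taken to be the set of all admissibility markers in the relevant link diagram. Recall that the generalized Kauffman states are exactly those which at each admissibility marker occupy only the $N$ or $W$ quadrant, so in the language of the proposition they are precisely the Kauffman states of type $C$-$NW$.

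First I would verify the hypotheses. By the construction in Figure~\ref{fig:AdmissMarker}, each admissibility marker is created adjacent to the distinguished (infinite) region of the projection: the dotted arc replacing a leftmost minimum gives a generalized crossing whose $S$ quadrant lies inside the infinite region $R$. Thus every $c\in C$ is adjacent to $R$, and the quadrant labelling of the proposition is satisfied. It remains to check that the diagram $P'$ obtained by taking the $NS$-resolution of $P$ at every $c\in C$ is connected. But the $NS$-resolution at an admissibility marker simply undoes the isotopy of Figure~\ref{fig:AdmissMarker}: it reconnects the $N$ strand with the $S$ strand, erasing the dotted arc and restoring the original link minimum. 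Hence $P'$ is precisely the underlying link projection with the edge markers removed, which is connected by the hypothesis of the corollary.

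With these two conditions verified, Proposition~\ref{prop:ClockConnected} applies and asserts that the set of Kauffman states of type $C$-$NW$ is clock-connected, which is exactly the conclusion we want for the set of generalized Kauffman states.

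The only potentially subtle point is the last hypothesis, namely the connectedness of the $NS$-resolved diagram $P'$. I expect this to be immediate from the way the admissibility markers are introduced (as small local modifications near the leftmost minima), together with the hypothesis that the diagram is connected in the complement of the edge markers; but one should be careful that the local picture of Figure~\ref{fig:AdmissMarker} really does reduce, under $NS$-resolution at the generalized crossing, to the unmarked minimum, so that $P'$ coincides with the original projection with the dotted arcs deleted.
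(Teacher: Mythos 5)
Your proof is correct and follows essentially the same route as the paper's, which is simply a direct application of Proposition~\ref{prop:ClockConnected}: the paper's entire argument is the parenthetical observation that generalized Kauffman states are the Kauffman states of a connected diagram constrained to occupy the $N$ and $W$ quadrants at the admissibility markers. You supply the details that the paper leaves implicit — that $C$ should be the set of admissibility markers, that each has its $S$ quadrant in the distinguished (infinite) region by the very construction of Figure~\ref{fig:AdmissMarker} applied to leftmost minima, and that $NS$-resolving at all of them recovers the original projection, so the hypothesis ``connected in the complement of the edge markers'' gives exactly the connectedness of $P'$ that the proposition requires. You also appropriately flag the only step needing care, namely that the local $NS$-resolution at an admissibility marker really does undo the isotopy of Figure~\ref{fig:AdmissMarker}; this is indeed the content being silently used in the paper's one-line proof.
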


\begin{proof}
  This follows from Proposition~\ref{prop:ClockConnected} (thinking of
  the generalized Kauffman states as Kauffman states for a connected
  diagram, which are constrained to occupy positions $N$ and $W$).
\end{proof}

\begin{prop}
  \label{prop:KauffLink2}
  The Heegaard diagram associated to a relevant link diagram is an
  admissible Heegaard diagram, and its Heegaard states correspond to
  Kauffman states for the admissibly marked diagram.  Moreover, the
  Alexander and Maslov gradings are as specified in 
  Figures~\ref{fig:KauffLink} and~\ref{fig:KauffAdmissMarker}.
\end{prop}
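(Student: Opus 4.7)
The plan is to establish the three assertions of the proposition---admissibility, the bijection between Heegaard states and generalized Kauffman states, and the grading formulas---by combining local model computations with the clock-connectedness result of Proposition~\ref{prop:ClockConnected}.

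First I would verify admissibility. The periodic domains of the un-markered (small) Heegaard diagram are spanned, for each non-distinguished link component, by a domain with local multiplicity of a single sign (the difference of the $\alpha$-connected region containing the component and its $\beta$-connected counterpart), which is exactly what prevents the small diagram from being admissible. Attaching the local piece of Figure~\ref{fig:AdmissHeeg} at each non-distinguished minimum introduces two new $\alpha$- and $\beta$-curves, and a direct local calculation shows that the corresponding periodic domain is now forced to acquire a bigon of opposite sign supported inside the admissibility-marker region. Any further periodic domain involving the new curves is killed by the vanishing condition at the new $\wpt$ and $\zpt$ punctures, so admissibility follows.

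Second, for the Heegaard-to-Kauffman correspondence I would work locally. At each ordinary crossing the local model is the one of~\cite{AltKnots}, where a Heegaard state picks exactly one of the four intersection points, and these are in canonical bijection with the four adjacent regions. By inspection of Figure~\ref{fig:AdmissHeeg}, each admissibility marker contributes exactly two intersection points, which correspond to the $N$ and $W$ quadrants. Counting intersection points shows that the total number of $\alpha\cap\beta$ points in a Heegaard state equals the number of generalized crossings, while counting regions shows that this in turn equals the number of indistinguished regions of the diagram. Hence a Heegaard state is precisely a bijection between generalized crossings and indistinguished regions obeying the local adjacency constraints, i.e.\ a generalized Kauffman state.

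Third, I would verify the grading formulas by choosing a convenient reference state in a small model diagram, computing its Maslov and Alexander gradings by hand, and then propagating the answer through the set of all states using Proposition~\ref{prop:ClockConnected}. Each clock move corresponds to a small bigon or rectangle in $\Sigma$ whose contributions to $\Mas$ and $\Alex$ can be read off directly, and one checks that these contributions match the differences of the monomial products in Figures~\ref{fig:KauffLink} and~\ref{fig:KauffAdmissMarker}. Clock moves away from the admissibility markers reduce to the calculation of~\cite{AltKnots}, so the remaining work concerns the two new cases where one crossing of the clock move sits at an admissibility marker.

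The hard part will be this last step: matching the two-term local monomial weights $q^{-1}s^{-1/2}+s^{1/2}$ and $q^{-1}+1$ of Figure~\ref{fig:KauffAdmissMarker} with the Maslov and Alexander differences between the $N$ and $W$ Kauffman positions at an admissibility marker. This requires identifying the correct homotopy class of Whitney disk connecting the two positions within the admissibility-marker piece, computing its multiplicities at $\wpt$ and $\zpt$ and its Maslov index, and checking that the resulting shifts reproduce the ratio of the two monomials; careful bookkeeping of orientations, of which quadrants are $\alpha$-matched to the distinguished region, and of the interaction with neighboring crossings is where the effort will go.
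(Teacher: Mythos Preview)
Your overall strategy---local model computations plus clock-connectedness via Proposition~\ref{prop:ClockConnected}---is exactly the paper's approach, but there is a structural error in your local model at the admissibility marker, and a gap in the absolute-grading step.

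The admissibility-marker piece of Figure~\ref{fig:AdmissHeeg} contributes \emph{four} relevant intersection points, not two. The paper labels them $a,b,x,y$ (Figure~\ref{fig:HSAdmiss}): the pair $\{a,b\}$ both sit over the $N$ quadrant and are joined by a small bigon crossing $\zpt$; the pair $\{x,y\}$ both sit over the $W$ quadrant and are joined by a small bigon disjoint from the basepoints. This is precisely why Figure~\ref{fig:KauffAdmissMarker} records binomials rather than monomials: the two terms in $q^{-1}s^{-1/2}+s^{1/2}$ are the individual contributions of $a$ and $b$, and similarly $q^{-1}+1$ records $x$ and $y$. So the Heegaard-to-Kauffman map is $2^{\ell-1}$-to-one, not a bijection, and your ``hard part''---matching the binomials with an $N$-versus-$W$ Whitney disk---is aiming at the wrong target. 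The relevant local disks are the bigons $a\leftrightarrow b$ and $x\leftrightarrow y$ internal to each quadrant; the clock moves then compare quadrants as in the ordinary-crossing case.

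There is also a genuine gap in pinning down absolute gradings. Clock-connectedness only gives relative gradings, and ``computing its Maslov and Alexander gradings by hand'' for a reference state hides real content. The paper removes the Maslov indeterminacy by handlesliding the $\beta$-curves over the $\zpt$-regions (as in~\cite[Lemma~2.5]{ClassKnots}) until each crossing $\beta$ becomes a meridian, leaving a unique Kauffman state whose $2^{\ell-1}$ Heegaard lifts survive in homology with top Maslov grading zero; the Alexander indeterminacy is then removed by verifying the orientation-reversal symmetry of Equation~\eqref{eq:Symmetry} directly on the local contributions. Your outline supplies no substitute for either step.
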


\begin{proof}
For admissibility we argue as follows. There is an ordering on link
components, $L_1,\dots,L_{\ell}$, in increasing order of the value of
the global minimum; for example, $L_\ell$ contains the special
minimum.  The space of periodic domains is spanned by 
domains $P_i$ supported in a
neighborhood of each $L_i$ for $i=2,\dots,\ell$. Find the largest $k$
so that $P_k$ appears in a given expansion. If it occurs with positive
local multiplicity, then we can find a small bigon in the Heegaard
diagram for the corresponding admissibility marker (i.e. the bigon
from $y$ to $x$ locally in Figure~\ref{fig:HSAdmiss}) where the
periodic domains has has negative local multiplicity. This verifies admissibility.

  Label the Heegaard states locally near each admissibility arc
  as in Figure~\ref{fig:HSAdmiss}.
\begin{figure}[h]
 \centering
 \input{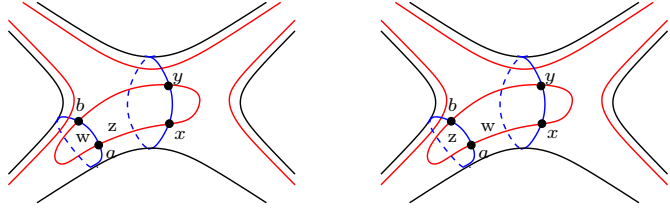}
 \caption{{\bf Local Heegaard states near the
     admissibility markers.}}
 \label{fig:HSAdmiss}
 \end{figure}
 
 This gives a correspondence between Heegaard states and Kauffman states:
 $a$ and $b$ locally correspond to the two monomials
 in the northern quadrants of Figure~\ref{fig:KauffAdmissMarker};
 $x$ and $y$ locally correspond to the two terms in
 the western quadrant in Figure~\ref{fig:KauffAdmissMarker}.

 We check next that the local contributions of these Kauffman states
 agree with the Alexander and Maslov gradings, up to overall additive
 constants.

 Suppose that the marked strand is oriented upwards; i.e. that we are
 considering the Heegaard diagram on the left in
 Figure~\ref{fig:HSAdmiss}.  First we compare the contributions of
 states associated to the same quadrant, using domains that are
 locally contained in the marked region; i.e. by inspecting
 Figure~\ref{fig:HSAdmiss}. For example, there is a bigon in
 $\pi_2(a,b)$ (i.e with Maslov grading $1$) that crosses $\zpt$ but not
 $\wpt$. It follows that for two
 states occupying the same corners,
 \[\begin{array}{ll}
   \Alex(b)=\Alex(a)+[\mu_s] &
   \Mgr(b)=\Mgr(a)+1 \\
   \Alex(x)=\Alex(y) &
   \Mgr(y)=\Mgr(x)+1, \\
 \end{array}\]
 where here $[\mu_s]\in H_1(S^3\setminus L)$ is the generator which is the meridian for the $s^{th}$ component
 of $L$.

 Next, as in the proof of~\cite[Theorem~1.2]{ClassKnots}, we check
 that the Kauffman contributions transform as expected under
 clock moves. This is done in
 Figure~\ref{fig:CheckLocalContribs}.

 A similar computation can be done when the strand is oriented
 oppositely, to complete the identification of the Maslov and
 Alexander gradings, up to an overall additive indeterminacy.

 \begin{figure}[h]
   \centering
   \input{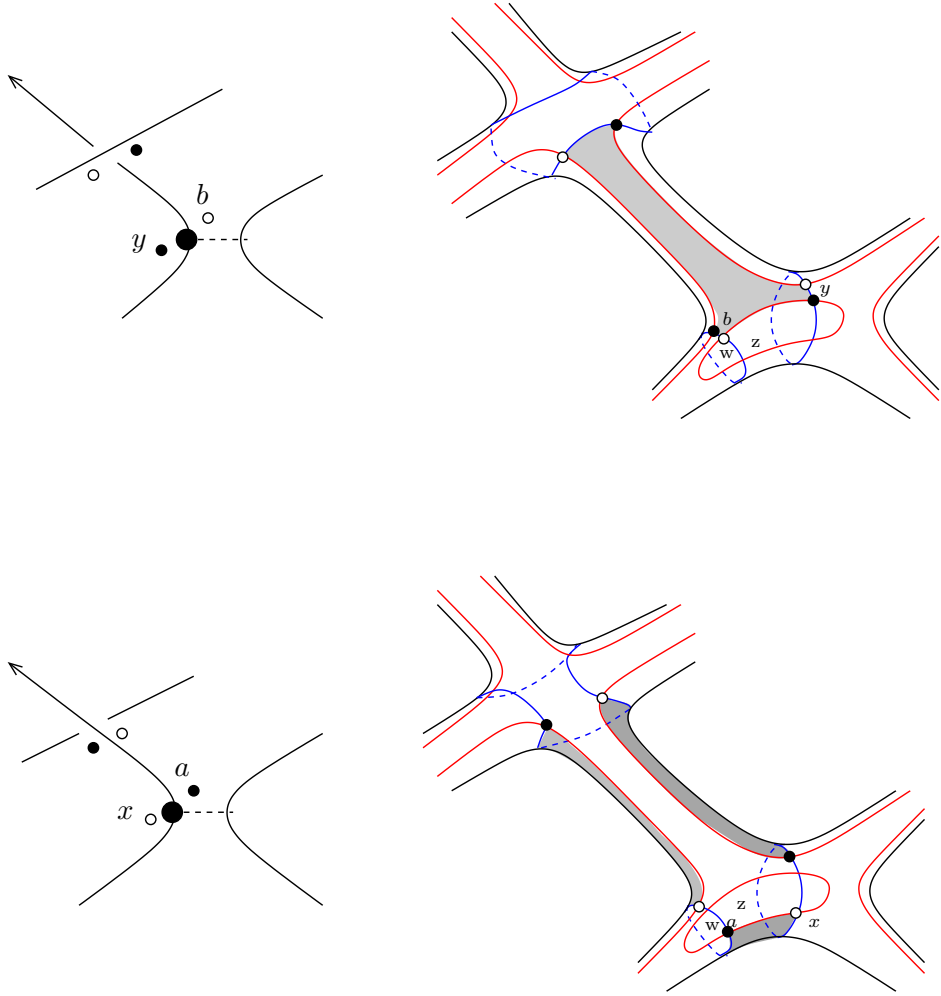}
   \caption{{\bf Checking local contributions.}}
   \label{fig:CheckLocalContribs}
 \end{figure}

 To remove the additive indeterminacy of the Maslov grading, we argue
 as in~\cite[Lemma~2.5]{ClassKnots}. After a sequence of handleslides
 back over $\zpt$-marked regions, we obtain a new diagram, where the
 $\beta$-curve at a each crossing is replaced by meridian, which meets
 $\alpha$-curves belonging to two (rather than possibly) four regions.
 Specifically, label the four edges at each crossing. The new
 $\beta$-curve, then, will be a meridian for the second edge we
 encounter as we traverse the knot. Once again, there is a
 correspondence between Heegaard states and Kauffman states for the
 resulting diagram, with the understanding that Kauffman states for
 the new diagram are allowed to be adjacent only to this second
 edge. In~\cite{ClassKnots}, we proved that there was one such
 state. (In that paper, we were considering the ``penultimate'', which
 is the third edge we encounter, instead of the second; but this
 discrepancy is due to our different orientation conventions on the
 knot.)  That generator in turn correspond to $2^{\ell-1}$ Heegaard
 states, according to whether we are using locally the intersection
 point $a$ or $b$; and all of these live on in homology. The absolute
 gradings are verified by noting that maximal Maslov grading is $0$.
 Since handleslides preserve the Maslov grading of the generator, the
 formula follows.

 \begin{figure}[h]
   \centering
   \input{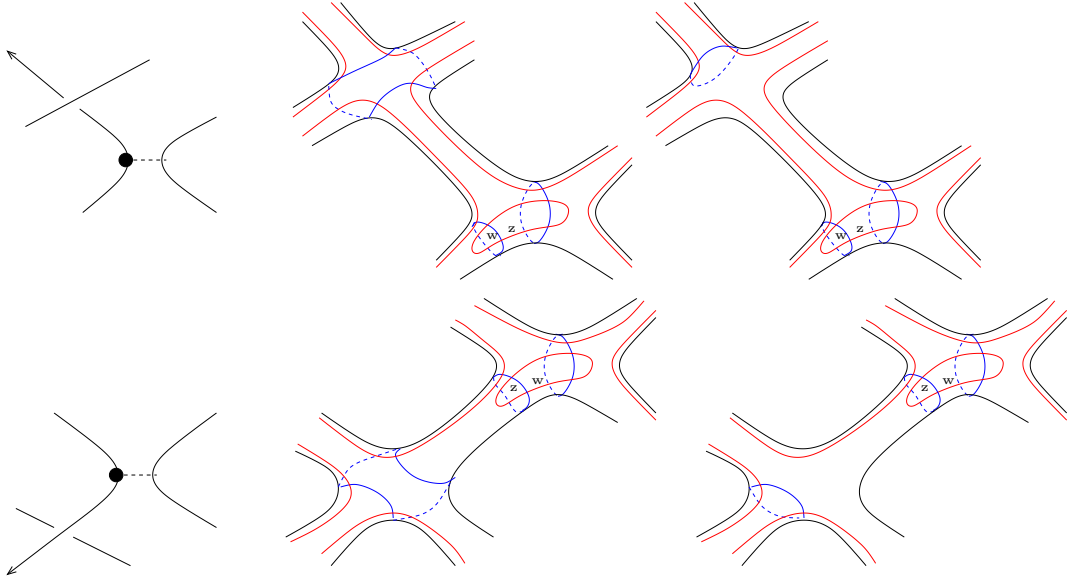}
   \caption{{\bf Kauffman state with Maslov grading zero.}
   Handleslide locally over the $\zpt$ marking to obtain the Heegaard diagram
   on the right.}
   \label{fig:HandleslideBack}
 \end{figure}

 The symmetry from Equation~\eqref{eq:Symmetry} holds. This can be
 seen on the level of local Kauffman contributions
 (Figures~\ref{fig:KauffLink} and~\ref{fig:KauffAdmissMarker}).
 Specifically, if $s$ is the variable associated to a given strand,
 then substituting $s\goesto (q^2 s)^{-1}$ to the monomial
 contribution gives $q^{\pm 1/2}$ times the monomial contribution
 obtained by reversing the orientation of the strand, where $\pm $ is simply the sign of the crossing. Thus,
 for a given Kauffman state $\x$, if
 \[ \Alex_{\oL}(\x)=(s_1,\dots,s_\ell) \qquad{and}\qquad\Mas_{\oL}(\x)=d \]
 then
 \[ \Alex_{\oL'}(\x)=(s_1,\dots,s_{i-1},-s_i,s_{i+1},\dots,s_\ell)
 \qquad{and}\qquad
 \Mas_{\oL}(\x)-2s_i=\Mas_{\oL'}(\x)-\kappa_i.\]
 Since the chain complexes $\CFLa(\orL)$ and $\CFLa(\orL')$ are isomorphic,
 we have verified Equation~\eqref{eq:Symmetry}.
 The absolute Alexander gradings follow.
\end{proof}

These diagrams can be sliced: a generic horizontal slice in a marked
link projection corresponds to a collection of curves in the Heegaard
surface that decomposes the Heegaard surface into a marked upper
diagram and a marked lower diagram.


\begin{figure}[h]
 \centering
 \input{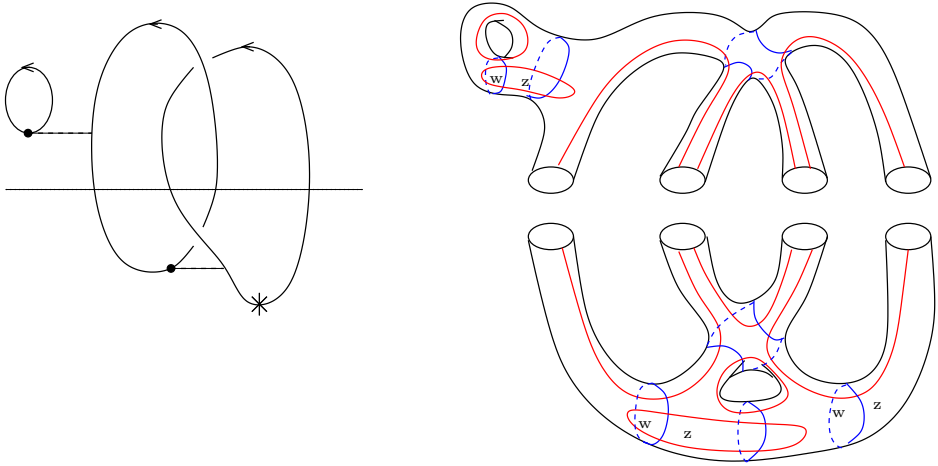}
 \caption{{\bf Slicing the Heegaard diagram of a marked link
     projection.}  Start from the marked link projection on the left,
   sliced in half along the dotted line, to obtain the upper and lower
   marked diagrams on the right.}
 \label{fig:SliceDiagram}
\end{figure}

\section{Algebra}
\label{sec:Algebra}

\subsection{The bordered algebras}
\label{subsec:OurAlgebras}

Recall the bordered algebra $\Blg(m,k)$ constructed
in~\cite[Section~3.2]{BorderedKnots}.
As in~\cite{HolKnot}, we specialize to the case $\Blg(n)=\Blg(2n,n)$,
equipped with its idempotent ring $\IdempRing(n)=\IdempRing(2n,n)$.
We will consider various subrings of
its idempotent ring $\IdempRing(n)$, as follows:

\begin{defn}
  \label{def:IdempSubring}
  Let $\IdempRing_{* \geq i}$ denote the subalgebra of $\IdempRing$
  generated by idempotent states $\x=x_1<\dots<x_k$ with $x_1\geq i$;
  similarly, let $\IdempRing_{*\leq i}$ denote the subalgebra of
  $\IdempRing$ generated by $\x$ so that $x_k\leq i$. Let
  $\IdempRing_{i\leq *\leq j}=\IdempRing_{*\geq i}\cdot \IdempRing_{*\leq  j}$.
\end{defn}

The algebra $\Blg(n)$ has a subalgebra $\IdempRing_{1\leq *\leq
  2n-1}\cdot \Blg(n)\cdot \IdempRing_{1\leq *\leq 2n-1}$, denoted
$\Clg(n)$ in~\cite{HolKnot}.  This is naturally the ring over which
the type $D$ structures of an upper diagram are defined. More
precisely, upper diagrams come equipped with a matching $\Matching$ on
$\{1,\dots,2n\}$. The matching specifies a curvature
\[ 
\mu_0^{\Matching}=\sum_{\{i,j\}\in\Matching} U_i U_j.
\]

Given an upper Heegaard diagram $\Hup$ with $2n$ $Z$-boundary components
and $\ell$ $\wpt$-boundaries (and $\ell$ $\zpt$-boundaries),
we can associate a matching $\Matching$ on $\{1,\dots,2n\}$.
Let $\cClg$ be the algebra
$\cClg(n)\otimes \Field[w_1,z_1,\dots,w_\ell,z_\ell]$ equipped
with curvature $\mu_0^{\Matching}$. We will show in Section~\ref{sec:DAmods}
how to construct an associated curved type $D$ structure
$\lsup{\cClg}\Dmod(\Hup)$. (In the case where $\ell=0$, this is the construction from~\cite{HolKnot}.)

Recall that $\Blg$ has a grading, called the {\em $\Delta$-grading},
which is $-1$ times the sum of all the weights of the algebra
elements; e.g. $\Delta(L_i)=-1/2$, $\Delta(U_i)=-1$.

\subsection{Simplifying type $D$ structures}

Let 
\[ \BigBlg=\Blg(n)\qquad{\text{or}}\qquad\Blg(n)\otimes \Field[w_1,z_1,\dots,w_\ell,z_\ell] \]
for some value of $n$ (and $\ell$). The algebra $\BigBlg$ is filtered
by weight, in the following sense. There is an ideal $\BigBlg_+$
generated by algebra elements with positive weight (either in $\Blg$
or in the $w_i$ or $z_i$) with quotient $\BigBlg/\BigBlg_+\cong
\IdempRing_{n}$.

Suppose that $\lsup{\cBigBlg}X$ is a (curved) type $D$ structure. 

\begin{lemma}
  \label{lem:SimplifyTypeD}
  A finitely generated type $D$ structure
  $\lsup{\cBigBlg}X$ is homotopy equivalent to a different type $D$ structure
  $\lsup{\cBlg}X'$ with the property that
  \[ \delta^1\colon X'\to \BigBlg_+\otimes X'. \] 
\end{lemma}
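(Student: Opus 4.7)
The plan is to use the weight filtration to split $\delta^1$ into its ``identity'' and ``positive'' parts, and then run the usual cancellation procedure until the identity part disappears.

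Concretely, the short exact sequence $0 \to \BigBlg_+ \to \BigBlg \to \IdempRing_n \to 0$ splits (canonically on the level of vector spaces) since $\IdempRing_n$ sits inside $\BigBlg$ as a direct summand. This gives a decomposition $\delta^1 = \delta^1_0 + \delta^1_+$, where $\delta^1_0 \colon X \to \IdempRing_n \otimes X$ and $\delta^1_+ \colon X \to \BigBlg_+ \otimes X$. First I would verify that $\delta^1_0$ squares to zero. Projecting the curved type $D$ structure relation
\[ (\mu_2 \otimes \id_X)\circ(\id_{\BigBlg}\otimes \delta^1)\circ \delta^1 = \mu_0^{\Matching}\otimes \id_X \]
onto the $\IdempRing_n\otimes X$ summand, one observes that $\mu_0^{\Matching} = \sum_{\{i,j\}\in \Matching} U_i U_j \in \BigBlg_+$ (since each $U_k$ has positive weight), and that the $\IdempRing_n$ component of a product $a_1 a_2$ can be nonzero only if both factors lie in $\IdempRing_n$ (because $\BigBlg_+$ is a two-sided ideal). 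Hence the identity component of the relation reads $\delta^1_0 \circ \delta^1_0 = 0$, so $\delta^1_0$ is an honest differential on $X$, commuting with the $\IdempRing_n$-action.

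Next I would invoke the standard cancellation lemma for type $D$ structures. Because $X$ is finitely generated over $\IdempRing_n$ and $\delta^1_0$ preserves the idempotent decomposition, $(X,\delta^1_0)$ is a finite-dimensional chain complex of $\Field$-vector spaces (in each idempotent summand). Choose any pair of generators $x,y$ with $y$ occurring with coefficient an idempotent in $\delta^1_0(x)$; the cancellation lemma produces a homotopy equivalent type $D$ structure with $x$ and $y$ removed and the remaining structure maps modified by composing with the relevant ``zig-zag'' through the canceled arrow. The modification only alters $\delta^1$ by terms whose algebra-coefficients are products containing at least one factor of $\delta^1_+$, hence those new terms lie in $\BigBlg_+ \otimes X$; in particular $\delta^1_0$ is strictly reduced. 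Because $X$ is finitely generated, this cancellation process terminates after finitely many steps, yielding a homotopy equivalent type $D$ structure $\lsup{\cBigBlg}X'$ with $\delta^1_0 \equiv 0$, i.e.\ $\delta^1(X') \subseteq \BigBlg_+ \otimes X'$.

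The only subtlety is checking that the cancellation lemma applies in the present curved setting. This poses no problem: the curvature $\mu_0^{\Matching}$ lives entirely in $\BigBlg_+$, so the modified structure maps continue to satisfy the curved structure equation with the same curvature (the proof is the usual homological perturbation argument, carried out one cancellation at a time). Termination is automatic from finite-generatedness, completing the proof.
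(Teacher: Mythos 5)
Your proof is correct and takes essentially the same approach as the paper: identify a pair of generators joined by an arrow with idempotent coefficient, cancel that arrow via the standard homological perturbation lemma (which only introduces new terms lying in $\BigBlg_+\otimes X'$), and conclude by induction on rank. The paper's version is much terser—it simply names the familiar cancellation argument—while you usefully spell out the splitting $\delta^1=\delta^1_0+\delta^1_+$, the fact that $\delta^1_0$ squares to zero, and the compatibility of cancellation with the curvature term $\mu_0^{\Matching}\in\BigBlg_+$, but the underlying argument is the same.
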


\begin{proof}
  This is a familiar argument in homological algebra: if
  $\delta^1(X')\not\in\BigBlg_+\otimes X'$, there are $p,q\in X'$ so
  that $\delta^1(p)\in q+\BigBlg_+\otimes X'$.  Define a new homotopy
  equivalent type $D$ structure with two fewer generators obtained by
  contracting the arrow from $p$ to $q$.
  The result now follows by induction on the rank of $X$ (as a left
  $\IdempRing$-module).
\end{proof}

\subsection{The algebra $\nDuAlg$}

In~\cite{HolKnot}, we introduced also algebras $\nDuAlg(n,\Matching)$
where $\Matching$ is a matching on $\{1,\dots,2n\}$. The algebra is
obtained by adding adding elements $\{E_i\}_{i=1}^{2n}$ to
$\Blg(2n,n+1)$. These elements satisfy the relations $E_i^2=0$,
\[ [E_i,E_j] =\left\{\begin{array}{ll}
1 &{\text{if $\{i,j\}\in\Matching$}} \\
0 &{\text{otherwise}}
\end{array}\right.\]
The algebra is equipped with a differential satisfying $d E_i=U_i$.

In Section~\ref{HK:subsec:nDuAlg}, we  introduced a 
$DD$ bimodule
$\lsup{\cBlg,\nDuAlg}\CanonDD$, as follows.
Fix $n\geq 1$ and a matching $\Matching$ on $\{1,\dots,2n\}$,
and let $\nDuAlg=\nDuAlg(n,\Matching)$, $\cBlg=\cBlg(n,\Matching)$.
Generators of $\lsup{\cBlg,\nDuAlg}\CanonDD$, as a vector space,
correspond to $n$-element subsets $\x\subset\{0,\dots,2n\}$, i.e.
$I$-states for $\Blg(2n,n)$.
Let $\gen_\x$ be the generator corresponding to $\x$. The  left $\IdempRing(2n,n)\otimes \IdempRing(2n,n+1)$-module 
structure is specified by
\[ (\Idemp{\x}\otimes \Idemp{\{0,\dots,2n\}\setminus \x}) \cdot \gen_\x = \gen_\x.\]
The differential is specified by the element
\[
A = \sum_{i=1}^{2n} \left(L_i\otimes R_i + R_i\otimes L_i\right) + \sum_{i=1}^{2n}
U_i\otimes E_i \in
  \Blg\otimes \nDuAlg,\]
  \[ \delta^1 \colon \CanonDD \to \Blg\otimes\nDuAlg \otimes
  \CanonDD.\] by $\delta^1(v)=A\otimes v$.

\subsection{Working over $\cClg$}

In this paper, we will consider type $D$ structures over $\cBlg$
which are actually induced by type $D$ structures over $\cClg$.
We write such type $D$ structures $\lsup{\cBlg}\iota_{\cClg}\DT \lsup{\cClg}Y$.
Concretely, a type $D$ structure $\lsup{\cBlg}Y$ is of this type if 
$\Idemp{1\leq * \leq 2n-1}\cdot Y = Y$.

The first simple observation we will use about such type $D$
structures is the following:

\begin{lemma}
  \label{lem:Injection}
  If 
  $\lsup{\cBlg}\iota_{\cClg}\DT~ \lsup{\cClg}Y\simeq
  ~\lsup{\cBlg}\iota_{\cClg}\DT~ \lsup{\cClg}Y'$,
  then $\lsup{\cClg}Y\simeq \lsup{\cClg}Y'$.
\end{lemma}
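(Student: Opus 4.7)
The plan is to show that the induction functor $\iota_{\cClg}$ is fully faithful on morphism complexes, so that any homotopy equivalence at the $\cBlg$ level descends to one at the $\cClg$ level with essentially no extra work. The key algebraic observation is that $\cClg$ is a corner subalgebra: writing $e$ for the idempotent $\Idemp{1\leq *\leq 2n-1}$ (i.e.\ the sum of all basic idempotents supported on $\x$ with $x_i \in \{1,\dots,2n-1\}$), we have $\cClg = e\cdot \cBlg\cdot e$, and both $Y=e\cdot Y$ and $Y'=e\cdot Y'$ by hypothesis.

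First I would unpack what a $\cBlg$-type $D$ structure morphism $f\colon \iota_{\cClg}Y \to \iota_{\cClg}Y'$ is: an $\IdempRing$-linear map $f\colon Y\to \cBlg\otimes_\IdempRing Y'$. Using $Y=eY$ on the source and $Y'=eY'$ on the target (which absorbs the right $e$ into the algebra factor through the tensor relation), one gets $f(Y)\subset e\cdot\cBlg\cdot e\otimes Y' = \cClg\otimes Y'$. Thus $f$ is automatically a $\cClg$-type $D$ morphism. The same argument applies to any bimodule morphism; in particular the structure maps $\delta^1$ themselves already land in $\cClg\otimes Y$, so the induced type $D$ structure over $\cClg$ is well-defined, and the chain complex of morphisms $\Mor^{\cBlg}(\iota Y,\iota Y')$ is naturally identified with $\Mor^{\cClg}(Y,Y')$, compatibly with the differential and composition.

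Given this identification, suppose $f\colon \iota Y\to \iota Y'$ and $g\colon \iota Y'\to \iota Y$ together with homotopies $h,h'$ exhibit a $\cBlg$-homotopy equivalence. By the paragraph above, $f,g,h,h'$ are all $\cClg$-morphisms, and the identities $g\circ f - \mathrm{id} = d h + h d$ and $f\circ g - \mathrm{id} = d h' + h' d$ continue to hold after viewing everything over $\cClg$. Hence $Y\simeq Y'$ as $\cClg$-type $D$ structures.

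The only subtlety, and the step I would write out most carefully, is the tensor-product bookkeeping that lets me rewrite $f(y) \in e\cdot\cBlg\otimes Y'$ as $f(y)\in\cClg\otimes Y'$; this requires that $e$ is an idempotent of $\IdempRing$ (so that $b\otimes ey' = be\otimes y'$ over $\IdempRing$) and that the differentials on $\cBlg$ and $\cClg$ agree on the corner, which is immediate from the definition of $\cClg$ as a subalgebra. No other obstacles arise, since curvature plays no role beyond the usual Leibniz identity for morphisms of curved type $D$ structures.
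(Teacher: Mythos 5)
Your argument is correct and is exactly the content behind the paper's terse proof, which just notes that morphisms and homotopies of type $D$ structures are by definition left $\IdempRing$-equivariant and lets the reader fill in the rest. You have simply unpacked that remark: since $Y=eY$ and $Y'=eY'$ for $e=\Idemp{1\leq *\leq 2n-1}$, $\IdempRing$-equivariance together with the tensor relation over $\IdempRing$ forces any $\cBlg$-morphism (or homotopy) to land in $e\cBlg e\otimes Y'=\cClg\otimes Y'$, identifying the two morphism complexes.
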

\begin{proof}
  Our morphisms (and homotopies) of type $D$ structures are, by definition, left
  $\IdempRing$-equivariant.  The result follows easily.
\end{proof}

\begin{lemma}
If 
\[ \lsup{\cBlg}X' \simeq 
\lsup{\cBlg}\iota_{\cClg}\DT~ \lsup{\cClg}Y, \]
and $\delta^1\colon X' \to \cBlg_+\otimes X'$,
then in fact $\delta^1$ takes $X$ to $\cClg_+\otimes X'$.
\end{lemma}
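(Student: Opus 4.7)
The plan is to first reduce $Y$ itself: applying Lemma~\ref{lem:SimplifyTypeD} with $\BigBlg$ replaced by $\cClg$ produces a homotopy equivalent reduced $\cClg$-type $D$ structure $Y'$ with $\delta^1\colon Y'\to \cClg_+\otimes Y'$. Then $\lsup{\cBlg}\iota_{\cClg}\DT\lsup{\cClg}Y'$ is a reduced $\cBlg$-type $D$ structure all of whose generators have left idempotent in $\IdempRing_{1\leq *\leq 2n-1}$; equivalently, $\Idemp{1\leq *\leq 2n-1}\cdot(\iota_{\cClg}\DT Y') = \iota_{\cClg}\DT Y'$. By hypothesis (and transitivity of $\simeq$) we still have $X' \simeq \iota_{\cClg}\DT Y'$ as reduced $\cBlg$-type $D$ structures.

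The next step is to promote this homotopy equivalence between two reduced type $D$ structures to an honest isomorphism. Given $f\colon X'\to \iota_{\cClg}\DT Y'$ and $g$ going back, with $fg$ and $gf$ homotopic to the identity, decompose $f^1 = f^0 + f^+$, where $f^0$ is the $\IdempRing$-linear ``identity part'' (the image under the augmentation $\cBlg\twoheadrightarrow \cBlg/\cBlg_+ \cong \IdempRing$) and $f^+$ lands in $\cBlg_+\otimes (\iota_{\cClg}\DT Y')$; similarly decompose $g^1$ and the chosen null-homotopies. Since $\delta^1$ lands in $\cBlg_+\otimes -$ on both sides, every term in the composition $fg$ except $f^0\circ g^0$, as well as every term in the homotopy formula for $fg\simeq \id$ that passes through $\delta^1$, lands in $\cBlg_+\otimes -$. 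Projecting onto $\IdempRing\otimes -$ therefore forces $f^0\circ g^0 = \id$, and symmetrically $g^0\circ f^0 = \id$, so $f^0$ is an $\IdempRing$-module isomorphism $X'\to \iota_{\cClg}\DT Y'$. In particular every generator of $X'$ has left idempotent in $\IdempRing_{1\leq *\leq 2n-1}$, i.e.\ $\Idemp{1\leq *\leq 2n-1}\cdot X' = X'$.

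The conclusion is then immediate from the idempotent compatibility of $\delta^1$: writing $\delta^1(x) = \sum_i a_i\otimes x_i$, each $a_i$ satisfies $I_x\cdot a_i\cdot I_{x_i} = a_i$ with $I_x,I_{x_i}\in \IdempRing_{1\leq *\leq 2n-1}$. Hence
\[ a_i \in \Idemp{1\leq *\leq 2n-1}\cdot \cBlg \cdot \Idemp{1\leq *\leq 2n-1} = \cClg, \]
and combined with $a_i\in \cBlg_+$ this yields $a_i\in \cBlg_+\cap \cClg = \cClg_+$, as desired. The main obstacle is the middle step---upgrading a homotopy equivalence of reduced type $D$ structures to an isomorphism---but once reducedness is in hand this reduces to a short bookkeeping argument about the augmentation $\cBlg\to\IdempRing$ and idempotent compatibility, as sketched above.
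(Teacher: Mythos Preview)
Your proof is correct and follows essentially the same approach as the paper's: both arguments show that every generator of $X'$ has left idempotent in $\IdempRing_{1\leq *\leq 2n-1}$ by projecting the homotopy equivalence onto $\IdempRing\otimes X'$ and using that $\delta^1_{X'}$ lands in $\cBlg_+$. The paper's version is slightly more direct---it argues by contradiction for a single ``bad'' generator $p$ rather than establishing the full $\IdempRing$-module isomorphism $f^0$---and it does not bother to reduce $Y$ first (indeed your reduction of $Y$ is unnecessary: you only need $g^0\circ f^0=\id_{X'}$, which uses the homotopy $gf\simeq\id_{X'}$ and hence only $\delta^1_{X'}$, not $\delta^1$ on the $Y$ side).
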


\begin{proof}
  Suppose that $X'$ has some element $p$ with the property that
  $\Idemp{\x}\cdot p = p$ with $0$ or $2n \in \x$.
  Then, the homotopy equivalence implies that
  \[ p=\delta^1\circ h + h \circ \delta^1;\] and our hypothesis on
  $\delta^1$ ensures that the right hand side is in $\Blg_+\otimes
  X$. This contradicts the fact that $p\in \IdempRing(n)\otimes X$.
\end{proof}

In view of the above lemma, if $X$ is any type $D$ structure
\[ \lsup{\cBlg}X \simeq \lsup{\cBlg}\iota_{\cClg}\DT~
\lsup{\cClg}Y, \] we can find an explicit form for $\lsup{\cClg}Y$ by
applying the algorithm of Lemma~\ref{lem:SimplifyTypeD}.

\subsection{Boundedness and tensor products}

Our type $DA$ bimodules over $\Blg(n)\otimes
\Field[w_1,z_1,\dots,w_\ell,z_\ell]$ will typically be bounded above
in Maslov grading; but not below. After all, each variable $w_i$ and
$z_i$ drops Maslov grading by one.

\begin{defn} 
  Let $\lsup{\Alg}X_{\Blg}$ be a DA bimodule over algebras
  $\Alg$ and $\Blg$.
  A type $DA$ bimodule is called {\em unital} if 
  \[ \delta^1_{k+1}(\x,a_1,\dots,a_k)
  =\left\{\begin{array}{ll}
        \x &{\text{if $k=1$ and $a_1=1$}} \\
        0 &{\text{if $k>1$ and some $a_i=1$}}.
        \end{array}\right.\]
  A type $DA$ bimodule is called {\em bounded}
  if for every integer $M$ (which one can think of as a negative integer
  with large absolute value) there is an $N$ so that
  $\Delta(\delta^1_{k+1}(\x,a_1,\dots,a_k))\geq M$
  we can conclude that
  $\sum_{i=1}^k \Delta(a_i)\geq N$.
\end{defn}

Our type $DA$ bimodules will be both bounded and unital.

\begin{prop}
  Let $\cBigBlg_i$ be algebras of the form $\Blg(n)$ or
  $\Blg(n)\otimes\Field[w_1,z_1,\dots,w_\ell,z_\ell]$ for $i=1,2$.  If
  $\lsup{\cBigBlg_2}X_{\cBigBlg_1}$ is a curved type $DA$ bimodule
  which is strictly unital, bounded, and graded. Let
  $\lsup{\cBigBlg_1}Y$ be a finitely generated type $D$ structure
  which is graded.
  Then, the sums defining type type $D$ structure
  on $X\DT Y$ are finite.
\end{prop}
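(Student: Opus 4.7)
The plan is to use the $\Delta$-grading to bound the length of iterations of $\delta^1_Y$ that contribute to $\delta^1$ on $X\DT Y$, and then to use finite generation of $Y$ to show that only finitely many sequences arise at each bounded length.

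First I would arrange that $\delta^1\colon Y\to \cBigBlg_{1,+}\otimes Y$, by appealing to Lemma~\ref{lem:SimplifyTypeD} (which applies since $Y$ is finitely generated) and replacing $Y$ by a homotopy equivalent type $D$ structure with this property. Every algebra element $a$ produced by $\delta^1$ then has strictly positive weight, hence $\Delta(a)\le -\tfrac{1}{2}$. Iterating $\delta^1_Y$ yields length-$k$ sequences $(a_1,\dots,a_k;y_k)$, and by gradedness
\[ \sum_{i=1}^{k}\Delta(a_i) \;=\; \Delta(y_0)-\Delta(y_k). \]
Finite generation of $Y$ confines $\Delta(y_k)$ to a finite set of values, so the right-hand side is bounded below by some constant $C$. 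Combined with $\Delta(a_i)\le -\tfrac{1}{2}$, this forces $-k/2\ge C$, hence $k\le -2C$; only finitely many values of $k$ can contribute.

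For each such $k$, the number of contributing sequences is finite, since $Y$ has finitely many generators and $\delta^1$ is a finite sum at each generator. Thus the sum defining $\delta^1$ on $X\DT Y$ has only finitely many nonzero terms. The main obstacle is the first step: ruling out idempotent self-loops $y\goesto \Idemp{\x_y}\otimes y$, which would produce arbitrarily long iterations with zero $\Delta$-contribution and defeat the bound on $k$. Lemma~\ref{lem:SimplifyTypeD} eliminates these. As an alternative to that reduction, the boundedness of $X$ can be invoked directly: since the total $\Delta$-grading of the output of $\delta^1$ on $X\DT Y$ is fixed and $\Delta(y_k)$ ranges over a finite set, the grading of $\delta^1_{k+1}(x,a_1,\dots,a_k)$ is bounded below, so boundedness yields a uniform lower bound on $\sum_i \Delta(a_i)$, and the argument proceeds as before. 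Strict unitality of $X$ plays the complementary role of ensuring the $k=0$ and $k=1$ terms are handled cleanly.
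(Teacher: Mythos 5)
Your primary argument applies Lemma~\ref{lem:SimplifyTypeD} to replace $Y$ by a homotopy-equivalent $Y'$ with $\delta^1(Y')\subset(\cBigBlg_1)_+\otimes Y'$, and then establishes finiteness of the iterated sums for $Y'$. This is circular for the proposition as stated, which asserts finiteness for the \emph{given} $Y$. Until the sums are known to be finite, $X\DT Y$ is not a well-defined type $D$ structure at all, so one cannot invoke homotopy invariance of $X\DT(-)$ to transfer the conclusion from $Y'$ back to $Y$. The paper's proof makes no such reduction and works with $Y$ directly.

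Relatedly, the role you assign to strict unitality --- "ensuring the $k=0$ and $k=1$ terms are handled cleanly" --- is misplaced, and this is where your proposed alternative also falls short. In the paper's one-sentence proof the three hypotheses interlock as follows: the grading of $\delta^1(\x\otimes\y)$ is fixed, and finite generation of $Y$ confines $\Delta(\y_k)$ to a finite set, so $\Delta\bigl(\delta^1_{k+1}(\x,a_1,\dots,a_k)\bigr)$ is bounded below by some $M$; boundedness of $X$ then supplies a \emph{$k$-independent} $N$ with $\sum_i\Delta(a_i)\ge N$; and strict unitality of $X$ is what turns this into a bound on $k$, because for $k>1$ it forces $\delta^1_{k+1}$ to vanish whenever some $a_i$ is an idempotent, so every surviving $a_i$ has $\Delta(a_i)\le -\tfrac12$, giving $-k/2\ge N$ and $k\le -2N$. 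Your alternative route ("boundedness can be invoked directly\ldots\ and the argument proceeds as before") is close, but "as before" silently re-uses the $Y$-reduction to obtain $\Delta(a_i)\le-\tfrac12$, so it does not stand on its own. Replacing that reduction with the appeal to unitality of $X$ is precisely what closes the gap, and is what the paper does.
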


\begin{proof}
  Gradings give a lower bound on $\delta(\x\otimes \y)$, and hence, by
  the boundedness of $X$, a lower bound on the grading of $a_1\otimes
  \dots\otimes a_k$. Unitality then gives the desired upper bound on
  $k$.
\end{proof}

\newcommand\OmegaInEv{\widecheck{\mathbf\Omega}^+}
\newcommand\OmegaInOdd{\widecheck{\mathbf\Omega}^-}
\newcommand\orW{\vec{W}}
\newcommand\pin{\widehat{p}}
\newcommand\pout{\widecheck{p}}
\newcommand\rhosin{\rhos^{\wedge}}
\newcommand\rhosout{\rhos^{\vee}}
\newcommand\SourceMid{\Source}
\newcommand\IdempIn{\widecheck{I}}
\newcommand\IdempOut{\widehat{I}}

\newcommand\Brs{{\mathcal B}_{\{r,s\}}}
\section{Holomorphically defined modules}
\label{sec:DAmods}

We sketch here the fairly straightforward modifications
needed to adapt the modules from~\cite{HolKnot} to the case of links.

\subsection{Modules for upper diagrams}

Let $\Hup$ be an admissible marked upper diagram with $2n$ boundary
circles $\Zout_1,\dots,\Zout_{2n}$. Let $\Mup$ be the induced
matching. Our aim here is to define a curved type $D$ structure over the algebra
\[ \Clgout=\Clg(n)\otimes \Field[w_1,\dots,w_\ell,z_1,\dots,z_\ell],  \]
where $\ell$ denotes the number of $\wpt$-markings (and $\zpt$-markings) in $\Hup$,
and the curvature is specified by the matching $\Mup$.

Given an upper state $\x$, we define
\[ \Iup(\x)=\{1,\dots 2n-1\}\setminus \alpha(\x) \]
as in~\cite{HolKnot}; the following definition of $\bOut(\phi)$ is also
obtained from there:
\begin{defn}
  \label{def:Bout}
  Let $\phi\in\doms(\x,\y)$.  Define $b_0(\phi)$ to be the
  homogenenous element
  $b\in \BlgZ(n)\otimes \Field[w_1,z_1,\dots,w_\ell,z_\ell]$
  characterized by the following properties that
  \begin{itemize}
    \item 
      $\Iup(\x)\cdot b\cdot \Iup(\y) = b$; and
    \item for all $i=1,\dots,2n$,
      $\weight_i(b)$ is the average of the local multiplicities of $\phi$ in the
      two elementary domains adjacent to $Z_i$.
    \item The $w_i$ exponent of $b_0(\phi)$  is given by $n_{\wpt_i}(\phi)$
      and the $z_i$ exponent of $b_0(\phi)$ is given by $n_{\zpt_i}(\phi)$.
  \end{itemize}
  Let $\bOut(\phi)$ denote the induced element in
  $\Blg(n)\otimes\Field[w_1,z_1,\dots,w_\ell,z_\ell]$. 
\end{defn}

There is a $\Delta$ grading on the algebra, which is given by $-1$ times
the total weight of an algebra element. In particular,
\[ \Delta(w_i)=\Delta(z_i)=-1.\]
If $W$ is an oriented one-manifold with boundary $\{1,\dots,2n\}$
and $n+\ell$ components,
we have an induced Alexander grading in the algebra characterized by
\begin{align*}
  \Agr(b\otimes \prod_{i=1}^{\ell} w_i^{a_i} z_i^{b_i})&=\sum_{\{i,j\}\in\Matching}(\weight_i(b)-\weight_j(b))e_{\{i,j\}} \\
  \qquad & + \sum_{i=1} ^{\ell} (b_i-a_i) e_i,
\end{align*}
where $e_{\{i,j\}}$ is the generator of $H^1(W;\Z)$ corresponding to
the component of $W$ connecting $i$ to $j$, and 
$e_i$ is generator of $H^1(W;\Z)$ corresponding to the $i^{th}$ closed
component of $W$.

Let $\Mgr\colon \doms(\x,\y)\to \OneHalf\Z$ denote the Maslov index of
the homotopy class of flows from $\x$ to $\y$; as in~\cite{LipshitzCyl}, this is computed by the formula
\[ \Mgr(B)=e(B)+n_{\x}(B)+n_{\y}(B),\]
where $e(B)$ is the Euler measure of $B$.

\begin{prop}
  There is a function $\Mgr\colon \States(\Hup)\to \Z$
  uniquely characterized up to an overall constant by
  the property that
  \begin{equation}
    \label{eq:CharacterizeMgr}
    \Mgr(\x)-\Mgr(\y)=\Mgr(\phi)-\Delta(b_0(\phi)).
  \end{equation}
  Similarly, given an orientation the one-manifold $W$
  there is a function
  $\Agr\colon \States(\Hup)\to \OneHalf\Z^{n+\ell}$
  with components $\Agr_{\{i,j\}}$ corresponding to $\{i,j\}\in\Matching$
  characterized by
  \begin{equation}
        \label{eq:CharacterizeAgr}
    \Agr(\x)-\Agr(\y)=\Agr(b_0(\phi)).
  \end{equation}
\end{prop}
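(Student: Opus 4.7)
The two parts of the proposition are both ``well-definedness up to a constant'' statements: one must show that the right-hand sides of \eqref{eq:CharacterizeMgr} and \eqref{eq:CharacterizeAgr} depend only on the endpoints $\x,\y$ (and on an overall choice of constant on each connected component of the state graph), together with additivity under concatenation. Additivity is immediate, since each of $\Mgr(\phi)$, $n_\x(\phi)$, $n_\y(\phi)$, the Euler measure, the weights of $b_0(\phi)$, and the $\wpt_i, \zpt_i$ multiplicities is additive under juxtaposition of homotopy classes, so $\Delta(b_0(\phi))$ and the algebraic Alexander grading of $b_0(\phi)$ are additive as well. Thus the plan reduces to checking that the right-hand sides vanish on periodic domains, i.e.\ that for any periodic domain $P$,
\[
  \Mgr(P)=\Delta(b_0(P)) \qquad\text{and}\qquad \Agr(b_0(P))=0.
\]

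The first step is to exploit the explicit basis for periodic domains noted before Definition~\ref{def:HeegaardState}: numbering $\{A_i\}_{i=1}^{\Lup}$ and $\{B_i\}_{i=1}^{\Lup+n-1}$ so that $A_i$ and $B_i$ both contain $\wpt_i$ and $\zpt_i$, the provincial periodic domains are spanned by $P_i:=A_i-B_i$ for $i=1,\dots,\Lup$ (the extra constraint \ref{UD:MinPerDom} ensures there are no additional relations). By additivity it suffices to check the two identities on each $P_i$. Next I would compute $b_0(P_i)$: because each $A_i$ is disjoint from every $Z_j$-boundary (by \ref{UD:TwoBoundariesApieceA}) and similarly for $B_i$ (by \ref{UD:TwoBoundariesApieceB}), all local multiplicities of $P_i$ at the $Z$-punctures vanish, so the weights $\weight_j(b_0(P_i))$ are all zero and the $\Blg(n)$-part of $b_0(P_i)$ is an idempotent. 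The only non-trivial contributions come from the $\wpt,\zpt$-exponents: one sees directly that $n_{\wpt_i}(P_i)=n_{\zpt_i}(P_i)=\pm 1$ with the same sign (since $\wpt_i,\zpt_i$ are both boundary components of $A_i$ with the same local multiplicity, and likewise for $B_i$), so $b_0(P_i)$ equals a pure idempotent times $w_i z_i$ or $(w_i z_i)^{-1}$.

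From here the Alexander-grading identity is immediate: the algebra's Alexander grading pairs $\wpt_i$ against $\zpt_i$ inside the same component of $W$ (a matched pair), and $b_i-a_i=0$ gives $\Agr(b_0(P_i))=0$. For the Maslov identity I would compute both sides separately. The right-hand side is $\Delta(b_0(P_i))=-\Delta(w_i z_i)=2$ in absolute value, with sign governed by the orientation of $P_i$. The left-hand side, $\Mgr(P_i)=e(P_i)+2n_\x(P_i)$, is evaluated using the Heegaard data: $A_i$ is the component of $\Sigma_0\setminus \alphas^c$ containing the closed component $L_i$, and $B_i$ is the corresponding $\beta$-component, so their Euler measures and vertex multiplicities can be read off, giving the same value. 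This matches the classical knot-case computation in~\cite{HolKnot}, specialized to the new ``closed marked component'' periodic domains that are the novelty of the upper-diagram setting here.

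Finally, to address states $\x,\y$ with $\doms(\x,\y)=\emptyset$, I would define $\Mgr$ and $\Agr$ on each equivalence class (under the relation ``connected by some homotopy class'') up to a constant; admissibility plays no role in well-definedness itself but ensures the finiteness properties needed elsewhere. The main obstacle I expect is the explicit bookkeeping in the last step: the Euler measure and vertex-multiplicity computation of $\Mgr(P_i)$ must be carried out carefully, taking into account how the closed link component $L_i$ interacts with the multiple $\alpha$- and $\beta$-circles surrounding it in a marked upper diagram, but this is a local computation on a neighborhood of $L_i$ and does not involve the rest of the diagram.
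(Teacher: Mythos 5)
There is a genuine gap in your reduction. For the right-hand side of \eqref{eq:CharacterizeMgr} to be independent of $\phi$, you must check the identity $\Mgr(\cald)=\Delta(b_0(\cald))$ for \emph{every} $\cald=\phi-\phi'$ with $\phi,\phi'\in\doms(\x,\y)$, and in a bordered upper diagram this difference is \emph{not} in general a provincial periodic domain. The paper's proof correctly observes that $\phi-\phi'=\sum_{i=1}^{\Lup} m_i A_i + \sum_{j=1}^{n+\Lup} n_j B_j$, where the $B_j$'s include the $n$ components bounded by two $Z$-circles and the $A_i$'s include the $\Lup$ components carrying a $\wpt,\zpt$ pair. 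Your plan reduces to the $\Lup$-dimensional space spanned by $P_i=A_i-B_i$, which only accounts for pairs $\phi,\phi'$ with $b_0(\phi)=b_0(\phi')$. It omits precisely the components with $Z$-boundary weight, which are the whole reason the correction term $\Delta(b_0(\phi))$ appears in the formula: one must check, as the paper does, that for each such $\cald$ both $e(\cald)+n_\x(\cald)+n_\y(\cald)$ and the total boundary weight equal $2$, so that the discrepancy in $\Mgr$ exactly matches the discrepancy in $\Delta(b_0)$.

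There is also a computational error in the part you do carry out: since $\wpt_i,\zpt_i$ each lie in the common intersection $A_i\cap B_i$ with local multiplicity $1$ in both, $n_{\wpt_i}(P_i)=n_{\zpt_i}(P_i)=1-1=0$, not $\pm 1$. Thus $b_0(P_i)$ is just an idempotent, $\Delta(b_0(P_i))=0$, and the required Maslov identity for $P_i$ is simply $\Mgr(P_i)=0$; the ``$\pm w_iz_i$'' factor and the ``$\pm 2$'' matching you describe never arise. The cleanest fix is to abandon the reduction to periodic domains and instead verify the two equalities on each of the generators $A_i$ and $B_j$ separately, which is the route the paper takes.
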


\begin{proof}
  This is a straightforward adaptation
  of~\cite[Proposition~\ref{HK:prop:DefineBigradingD}]{HolKnot}.  If
  $\phi, \phi'\in\doms(\x,\y)$, then 
  \[ \phi-\phi'=\sum_{i=1}^{\Lup} m_i \cdot A_i + \sum_{j=1}^{n+\Lup} n_j\cdot B_j,\] 
  where $A_i$ and $B_i$ are as in Definition~\ref{def:MarkedUpperDiagram}.
  To see Equation~\eqref{eq:CharacterizeMgr}, note that 
  for $\cald=A_i$ or $B_j$, we have
  $e(\cald)+n_{\x}(\cald)+n_{\y}(\cald)=2$, while
  each $\cald$ also has total weight $2$ ath the boundary.
  Equation~\eqref{eq:CharacterizeAgr} works similarly.
\end{proof}

\begin{equation}
  \label{eq:TypeDOperation}
  \delta^1(\x) = \sum_{\{\y\in \States, B\in\doms(\x,\y)\big|
    \Mgr(B)=1\}} \#\UnparModFlow^B(\x,\y) \cdot \bOut(B)\cdot \otimes
  \y.
\end{equation}

\begin{prop}
  The sum appearing on the right in Equation~\eqref{eq:TypeDOperation}
  is finite.
\end{prop}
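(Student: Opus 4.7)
Since $\States(\Hup)$ is finite, it suffices, for each fixed pair $\x,\y\in\States(\Hup)$, to bound the set
\[ \{B\in\doms(\x,\y) : \Mgr(B)=1,\ \UnparModFlow^B(\x,\y)\ne\emptyset\}. \]
The plan is to combine positivity of intersection with the admissibility of $\Hup$. First, positivity of intersection of a pseudo-holomorphic representative with the codimension-two divisors obtained by pushing the various basepoints off the curves shows that $\UnparModFlow^B\ne\emptyset$ forces every local multiplicity of $B$ to be non-negative. Thus it suffices to show that the set of non-negative $B\in\doms(\x,\y)$ with $\Mgr(B)=1$ is finite.

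Pick any base class $B_0\in\doms(\x,\y)$; then every other class has the form $B=B_0+P$, where $P$ lies in the finitely generated lattice $\mathcal P$ of periodic domains from Section~\ref{subsec:MarkedUpperHeegs} (the explicit basis noted there, given by the combinations $A_i-B_i$, shows that $\mathcal P$ accounts for all ambiguity in the homotopy class once $\x,\y$ are fixed). The constraint $\Mgr(B)=1$ becomes an affine linear constraint on $P$, and the non-negativity $B\ge 0$ becomes $P\ge -B_0$.

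Admissibility says that no non-zero periodic domain has all local multiplicities of a single sign; equivalently, the closed rational cone $\{P\in\mathcal P\otimes\R : P\ge 0\}$ in the finite-dimensional vector space $\mathcal P\otimes\R$ is trivial. A standard compactness argument then yields finiteness of $\{P\in\mathcal P : P\ge -B_0\}$: were it infinite, the normalized elements $P_n/\|P_n\|$ would (after passing to a subsequence) converge to a non-zero element of the cone, a contradiction. Intersecting with the affine Maslov-index hyperplane only shrinks the set, so the sum in Equation~\eqref{eq:TypeDOperation} is finite.

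I expect the main obstacle to be justifying that the ambiguity lattice for $\doms(\x,\y)$ is really the periodic-domain lattice $\mathcal P$ of Section~\ref{subsec:MarkedUpperHeegs} (with vanishing multiplicities at $Z_i,\wpt_i,\zpt_i$ when viewed in $\Sigma$), so that the admissibility hypothesis applies to the relevant differences. The cleanest route is to first verify that $\doms(\x,\y)$ is either empty or an affine $\mathcal P$-torsor, and then invoke the spanning statement for $\mathcal P$ in terms of $A_i-B_i$ to be sure nothing escapes admissibility. Once this framework is in place, the convex-geometric step is routine.
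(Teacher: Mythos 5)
Your overall architecture (fix $\x,\y$; use positivity of domains for holomorphic representatives; reduce to counting non-negative classes mod an ambiguity lattice; invoke admissibility plus a cone/compactness argument) is the same as the paper's, and steps 1, 2, 4, 5 are fine. The gap is exactly the one you flag, and it is genuine: $\doms(\x,\y)$ is \emph{not} an affine torsor over the provincial periodic-domain lattice $\mathcal{P}$ spanned by the classes $A_i-B_i$. In an upper (bordered) diagram, two domains connecting the same pair of states can also differ by classes whose local multiplicities along $\partial\Sigma_0$ are nonzero and whose $\alpha$-boundary runs along the $\alpha$-arcs -- for instance the region $A_{\Lup+1}$ containing all the $Z$-boundaries, or $A_{\Lup+1}-B_j$ for $j>\Lup$. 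These are not provincial periodic domains in the sense of Definition~4.3, so the admissibility hypothesis says nothing about them, and your ``cleanest route'' (verifying the $\mathcal{P}$-torsor structure) would fail at exactly that point. Note also that since $\mathcal{P}$ consists of Maslov-index-zero classes, imposing $\Mgr(B)=1$ does not cut down anything \emph{within} a $\mathcal{P}$-coset -- so the index constraint has to be used elsewhere.

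Where the index constraint actually does the work, and what the paper's terse phrase ``a finite set plus the addition of periodic domains'' is encoding, is this: Equation~\eqref{eq:CharacterizeMgr} gives $\Delta(b_0(\phi))=\Mgr(\phi)-\Mgr(\x)+\Mgr(\y)$, which is fixed once $\x$, $\y$, and $\Mgr(B)=1$ are fixed. Since $\Delta$ is minus the total weight, and the weights of $b_0(\phi)$ are averages of local multiplicities of $B$ near the $Z_i$, $\wpt_j$, $\zpt_j$ punctures, non-negativity of $B$ then bounds each of those boundary multiplicities by a finite amount. This leaves a finite collection of cosets of $\mathcal{P}$ inside $\{B\in\doms(\x,\y):\Mgr(B)=1,\ B\ge 0\}$, and only then does your admissibility-plus-compactness step apply, coset by coset, to give finiteness. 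With that insertion your proof is correct and matches the paper's; without it, the argument does not close.
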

\begin{proof}
  Given $\x,\y$, the set of $B\in\doms(\x,\y)$ with fixed $\Mgr(B)$
  is a finite set plus the addition of periodic domains
  By admissibility, only finitely many 
  of these elements have only positive local multiplicities.
\end{proof}

\begin{prop}
  \label{prop:CurvedDA}
  The map $\delta^1$ satisfies the curved type $D$ structure
  relation, with curvature $\mu_0=\sum_{\{r,s\}\in\Matching} U_r U_s$.
\end{prop}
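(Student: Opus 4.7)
The plan is to adapt the proof of the curved type $D$ structure relation for unmarked upper diagrams (Proposition~\ref{HK:prop:CurvedTypeD} in~\cite{HolKnot}) to the marked setting. For each pair of states $\x,\z\in \States(\Hup)$ and each $B\in\doms(\x,\z)$ with $\Mgr(B)=2$, I would study the codimension-one ends of the compactified one-dimensional moduli space of flows in the homotopy class $B$, weighted by $\bOut(B)$. The mod-$2$ count of these ends vanishes, and summing over all such $(\x,\z,B)$ produces the desired relation.

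The ends split into the familiar three types (cf.~\cite{LipshitzCyl}): (a) two-story breakings contributing to $(\delta^1)^2$; (b) Reeb chord degenerations at the east $Z$-punctures, which combine with (a) to produce the algebra operations $\mu_1(\bOut(B))$ and $\mu_2(\bOut(B_1),\bOut(B_2))$; and (c) $\alpha$- and $\beta$-boundary degenerations. The new interior punctures $\wpt_i$ and $\zpt_i$, introduced by the closed link components, behave as in the closed theory of~\cite{LipshitzCyl}: local multiplicities at these punctures are absorbed into the $w_i^{n_{\wpt_i}(B)}z_i^{n_{\zpt_i}(B)}$ factors of $\bOut(B)$ and contribute no new types of ends.

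The new feature in (c) is the existence of periodic domains $A_i-B_i$ associated to the closed link components, as per Condition~\ref{UD:TwoBoundariesApieceA} of Definition~\ref{def:MarkedUpperDiagram}. I would show that the $\alpha^c$-boundary degeneration along $A_i$ and the $\beta$-boundary degeneration along $B_i$ cancel in pairs: both carry algebra weight $w_i z_i$ (the common local multiplicities at $\wpt_i$ and $\zpt_i$ as one traverses the respective boundaries), and they pair up as the two boundary-degeneration limits of the periodic two-chain $A_i - B_i$, hence cancel mod $2$. What remains are the $\beta$-boundary degenerations along the $B_j$ components whose two boundary components are both of type $Z$, say $Z_r$ and $Z_s$ with $\{r,s\}\in\Matching$; a local computation identical to the one in~\cite{HolKnot} shows that such a degeneration carries algebra weight $U_r U_s$. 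Summed over all such $B_j$, these yield precisely the curvature $\mu_0^{\Matching}=\sum_{\{r,s\}\in\Matching}U_r U_s$.

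The main technical obstacle is establishing the pairwise cancellation of the $A_i$- and $B_i$-type boundary degenerations: one must check compatibly oriented gluing of the two types of degenerations in the presence of the interior punctures $\wpt_i,\zpt_i$ and the closed $\alpha^c$ curves, beyond the analysis of~\cite{HolKnot}. Finiteness of all relevant sums follows from admissibility of $\Hup$ as in the preceding proposition.
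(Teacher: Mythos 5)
Your approach is correct and is essentially the one the paper takes: reduce to the unmarked argument from~\cite{HolKnot} and observe that the only new feature is a pair of boundary degenerations, one $\alpha$-sided (through the region $A_i$ containing $\wpt_i$ and $\zpt_i$) and one $\beta$-sided (through $B_i$), each contributing $w_i z_i$, which cancel mod $2$. Your phrase about ``compatibly oriented gluing'' being the main obstacle is a red herring in $\Field_2$ coefficients — what is needed is simply that both degenerations have mod-$2$ count $1$ and output the identical algebra element $w_i z_i \otimes \x$ — but the substance of your argument matches the paper's.
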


\begin{proof}
  This works as in the proof
  of~\cite[Proposition~\ref{HK:prop:CurvedTypeD}]{HolKnot}, with a
  little modification.  In this case, there are two additional types
  of possible boundary degenerations, ones that contain exactly one
  $\wpt_i$ marking and exactly one $\zpt_i$ marking. In fact, for each $i$,
  there are exactly two such boundary degenerations: one with
  $\alpha$-boundary, and one with $\beta$-boundary. The contributions 
  to $\delta^1\circ \delta^1$ of these two boundary degenerations cancel.
\end{proof}

Let $\lsup{\cClgout}\Dmod(\Hup,J)$ denote the curved type $D$ structure defined above.

\begin{prop}
  If $J_0$ and $J_1$ are any two generic almost-complex structures,
  there quasi-isomorphism of curved, graded type $D$ structures
  over $\Clgout$
  \[\lsup{\cClgout}\Dmod(\Hup,J_0)\simeq \lsup{\cClgout}\Dmod(\Hup,J_1).\]
\end{prop}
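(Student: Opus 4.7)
The plan is to adapt the standard continuation argument from Heegaard Floer homology to the curved type $D$ structure setting, following the approach used in \cite[Proposition~\ref{HK:prop:TypeDquasiIso}]{HolKnot} for upper diagrams with no closed components. First I would choose a generic path of almost-complex structures $\{J_t\}_{t\in [0,1]}$ connecting $J_0$ to $J_1$. Using this, I would define a type $D$ structure morphism
\[ F\colon \lsup{\cClgout}\Dmod(\Hup,J_0) \to \lsup{\cClgout}\Dmod(\Hup,J_1) \]
by the formula
\[ F(\x) = \sum_{\y,B} \#\UnparModFlow^{B}_{\{J_t\}}(\x,\y)\cdot \bOut(B) \otimes \y, \]
where the sum is over $\y\in\States(\Hup)$ and $B\in\doms(\x,\y)$ with $\Mgr(B)=0$, and $\UnparModFlow^{B}_{\{J_t\}}(\x,\y)$ denotes the moduli space of pseudo-holomorphic disks using the time-dependent family. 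Finiteness of this sum uses admissibility of $\Hup$ just as in the proof that $\delta^1$ is well-defined.

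Next I would verify the type $D$ morphism relation
\[ \delta^1_{J_1} \circ F + (\mathrm{id}_{\Blg}\otimes F)\circ \delta^1_{J_0} = 0 \]
by analyzing ends of the one-dimensional moduli spaces $\UnparModFlow^{B}_{\{J_t\}}$ with $\Mgr(B)=1$. The codimension-one ends come in three types: breaking off a $J_0$-holomorphic disk at the start, breaking off a $J_1$-holomorphic disk at the end, or boundary degenerations. The first two yield the two terms of the above relation. For boundary degenerations, the same argument as in Proposition~\ref{prop:CurvedDA} applies: within each $A_i$ and $B_i$ region there are exactly two such degenerations (one with $\alpha$-boundary, one with $\beta$-boundary) whose contributions cancel in pairs, both for the periodic domains spanned by $A_i - B_i$ and for the new degenerations carrying one $\wpt_i$ and one $\zpt_i$ marking. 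Consequently $F$ is a morphism of curved type $D$ structures with the common curvature $\mu_0 = \sum_{\{r,s\}\in\Matching} U_r U_s$.

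To conclude that $F$ is a quasi-isomorphism, I would reverse the family to obtain $G$ in the opposite direction, and then consider a generic two-parameter family of almost-complex structures interpolating between the concatenation $G\circ F$ (resp.\ $F\circ G$) and the constant family at $J_0$ (resp.\ $J_1$). Counting holomorphic disks parametrized by this two-parameter family defines a homotopy $H$, and the boundary of the one-dimensional parameter space plus codimension-one degenerations of moduli spaces gives the homotopy relation
\[ G\circ F + \mathrm{id} = \delta^1\circ H + (\mathrm{id}\otimes H)\circ \delta^1, \]
so $G\circ F \simeq \mathrm{id}$, and symmetrically $F\circ G \simeq \mathrm{id}$.

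The main obstacle will be the bookkeeping of boundary degenerations in the curved setting with marked boundary components. Unlike the knot case, one must track degenerations that straddle a $\wpt_i$-$\zpt_i$ pair; but as noted in the proof of Proposition~\ref{prop:CurvedDA}, the new boundary degenerations come in cancelling $\alpha$/$\beta$ pairs, and they contribute the same curvature term on both sides of the continuation equation, so they do not obstruct the morphism or homotopy relations. Gradings are preserved throughout because $J_t$-holomorphic disks still satisfy Equations~\eqref{eq:CharacterizeMgr} and~\eqref{eq:CharacterizeAgr}, as these are topological identities on domains.
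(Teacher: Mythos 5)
Your proposal is correct and follows essentially the same approach the paper takes: the paper simply cites the continuation argument from~\cite[Proposition~\ref{HK:prop:TypeDquasiIso}]{HolKnot}, and your write-up is the standard unpacking of that argument adapted to the marked setting. Your explicit observation that the new $\alpha$/$\beta$ boundary degenerations straddling a $\wpt_i$-$\zpt_i$ pair cancel just as in Proposition~\ref{prop:CurvedDA} is the one genuinely new point to check, and you handle it correctly.
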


\begin{proof}
  This follows exactly as
  in~\cite[Proposition~\ref{HK:prop:TypeDquasiIso}]{HolKnot}. 
\end{proof}

\subsection{Middle diagrams}

Fix a middle diagram
\begin{align*} \Hmid=(\Sigma_0,(\Zin_1,&\dots,\Zin_{2m}),(\Zout_1,\dots,\Zout_{2n}),
\{\alphain_1,\dots,\alphain_{2m-1}\},
\{\alphaout_1,\dots,\alphaout_{2n-1}\},\\
&\{\alpha^c_1,\dots,\alpha^c_{g}\},
\{\beta_1,\dots,\beta_{g+m+n+\Lmid-1}\},\{\wpt_1,\dots,\wpt_{\Lmid}\},\{\zpt_1,\dots,\zpt_{\Lmid}\}).
\end{align*}
Let $\Mmid$ be the induced $\beta$-matching as in Definition~\ref{def:BetaMatching}; and let $\MatchIn$ be a compatible matching on $\{1,\dots,2m\}$, in the sense
of Definition~\ref{def:Compatible}.

\begin{defn}
  Let $\Hmid$ be a middle diagram, equipped with a matching $\MatchIn$ on the
  incoming boundary components.
  Let 
  \[\Clgin(\Hmid)=\Clg(m)\otimes \Field[w_1,z_1,\dots,w_{\Lmid},z_{\Lmid}];
  \qquad \Clgout(\Hmid)=\Clg(n)\otimes \Field[w_1,z_1,\dots,w_{\Lmid},z_{\Lmid}].\]
\end{defn}

Our aim here is to define a curved type $DA$ bimodule associated to the middle diagram, denoted
$\lsup{\cClgout}\DAmod(\Hmid)_{\cClgin}$.

Each middle Heegaard state $\x$ determines two subsets
\[ \alphain(\x)\subset \{1,\dots,2m\}\qquad
{\text{resp.}}\qquad
\alphaout(\x)\subset \{1,\dots,2n\}\]
consisting of those $i\in\{1,\dots,2m\}$ resp.  $\{1,\dots,2n\}$
with $\x\cap \alphain_i\neq \emptyset$ resp 
$\x\cap\alphaout_i\neq\emptyset$. 
As an  $\Field$-vector space
$\DAmod(\Hmid)$ is spanned
spanned by
the middle Heegaard states of $\Hmid$.
Let 
\[ \IdempIn(\x)=\Idemp{\alphain(\x)}
\qquad{\text{and}}\qquad
\IdempOut(\x)=\Idemp{\{1,\dots,2n-1\}\setminus \alphaout(\x)}.\]
The 
$\IdempRing(n)-\IdempRing(m)$-bimodule structure is specified by
\begin{equation}  
\label{eq:DAoverIdempotents}
\IdempOut(\x)\cdot \x
\cdot \IdempIn(\x)=\x.
\end{equation}

 we need one more piece of data: an orientation 
$\orW$ on $W=\Wmid\cup\Win$.  Each boundary
component $\Zin_i$, $\Zout_j$, $\wpt_i$ or $\zpt_i$ 
of $\Hmid$ corresponds to some point
on $W$.

Orient $W$ from the $\wpt$ boundaries to the $\zpt$ boundaries.  (We choose
this convention to agree with~\cite{HolKnot}; it is opposite to the
orientation on the link as specified in
Section~\ref{subsec:HeegLinks}.)  There are other components of $W$
that connect various $\Zout$-boundaries. There are two kinds of $\Zin$
boundaries in $\Wmid$: those that are outwardly oriented in $\Wmid$,
and those that are inwardly oriented. Their corresponding orbits are
called {\em even} and {\em odd} respectively; and this partition of
the orbits into even and odd is called the {\em $\orW$-induced orbit
  marking}. The terminal boundary of any component of $W$ corresponds
either to a $\Zout$-boundary, or a $\zpt$-boundary. To each boundary
component in $\Zin_i$, which we think of as a point on $\orW$, we
associate an element $\zpt_j$ or $\Zout_j$, which is the terminal point
of the component of $\orW$ (with respect to its orientation) that
contains $\Zin_i$. We denote this $\tau(\Zin_i)$.

\begin{defn}
\label{def:AlgebraicPacket}
Recall that a set of Reeb chords 
$\{\rho_1,\dots,\rho_j\}$ in $\dIn \Hmid$
is called {\em algebraic} if
for any pair of distinct chords $\rho_a$ and $\rho_b$,
\begin{itemize}
  \item the chords $\rho_a$ and $\rho_b$ are on different boundary components $\Zout_i$ and $\Zout_j$,
  \item the initial points $\rho_a^-$ and $\rho_b^-$ are on different $\alpha$-curves; and
  \item the terminal points $\rho_a^+$ and $\rho_b^+$ are on different $\alpha$-curves.
\end{itemize}
\end{defn}
As explained in~\cite[Section~\ref{HK:subsec:AlgebraicConstraints}]{HolKnot},
algebraic packets determine algebra elements.

\begin{defn}
  \label{def:CompatiblePacketDA}
  Fix a Heegaard state $\x$ and a sequence $\vec{a}=(a_1,\dots,a_\ell)$ of pure
  algebra elements of $\Clgin(\Hmid)$. 
  A sequence of constraint packets $\rhos_1,\dots,\rhos_h$ is called
  \em{$(\x,\vec{a})$-compatible} if  there is a sequence 
  $1\leq k_1<\dots<k_\ell\leq k$ so that the following conditions hold:
  \begin{itemize}
  \item the constraint packets $\rhos_{k_i}$ consist of chords
    in $\Zin$, and they are algebraic, in the sense
    of Definition~\ref{def:AlgebraicPacket},
  \item  $\IdempIn(\x)\cdot \bIn(\rhos_{k_1})\otimes\dots\otimes \bIn(\rhos_{k_\ell})=
    {\mathbf I}(\x)\cdot a_1\otimes\dots\otimes a_{\ell}$,
    as elements of $\DAmod(\Hdown)\otimes \Clgin^{\otimes \ell}$
  \item
    for each $t\not\in \{k_1,\dots,k_\ell\}$,
    the constraint packet $\rhos_t$ is one 
    of the following  types:
    \begin{enumerate}[label=($DA\rhos$-\arabic*),ref=($DA\rhos$-\arabic*)]
    \item 
      \label{eq:OddOrbit}
      it is a singleton set  $\{\orb_i\}$,
      containing a single Reeb orbit $\orb_i$ 
      that covers the boundary component $\Zin_i$,
      which is {\em odd} for the $\orW$-induced orbit marking.
    \item 
      \label{eq:EvenOrbit}
      it consists of two elements
      $\{\orb_i,\longchord_j\}$, where $\orb_i$ is
      a Reeb orbit that covers a boundary component $\Zin_i$ 
      which is {\em even}, with
      $\{i,j\}\in\Matching$, and where
      $\longchord_j$ is one of the two Reeb chords
        that covers $\Zin_j$ with multiplicity one. 
    \item 
      \label{eq:OutOrbit}
      it is of the form $\{\orb_k\}$, where $\orb_k$ is the simple Reeb orbit
      around some component in $\Zout$
    \item 
      \label{eq:OutChord} it is of the form $\{\rho_j\}$, where $\rho_j$ is a Reeb chord
      of length $1/2$ supported in some $\Zout$.
  \end{enumerate}
\end{itemize}
Let $\llbracket \x,a_1,\dots,a_\ell\rrbracket$ 
be the set of all sequences of constraint packets $\rhos_1,\dots,\rhos_h$
that are $(\x,\vec{a})$-compatible.
\end{defn}

There is an algebra element $\bOut(B,\rhos_1,\dots,\rhos_h)$ defined
as follows.  We multiply $\bOut(B)$ (as defined in
Definition~\ref{def:Bout}) by a monomial in $\Clgout(\Hmid)$, determined as follows.
\begin{itemize}
\item
  For each packet $\rhos_i=\{\orb_i\}$ of Type~\eqref{eq:OddOrbit},
  multiply by the algebra element associated to the terminal point $\tau(\Zin_i)$.
  This terminal element $\tau(\Zin_i)$ can either be of the form $\Zout_j$, in which
  case the algebra element is $U_j$; or it can be some $\zpt_j$, in which case the algebra element is $z_j$.
\item
  For each packet $\rhos_i$ consisting of a single Reeb orbit
  that covers $\wpt_i$ or $\zpt_i$,
  multiply by a factor of $w_i$ resp. $z_i$ respectively.
\end{itemize}
(In particular, the packets of Type~\ref{eq:EvenOrbit} contribute a
factor of $1$.)

\begin{prop}
  \label{prop:FiniteSums}
  If $\Hmid$ is admissible, for each  $(\x,a_1,\dots,a_\ell)$,
  there are only finitely
  many choices of $(B,\rho_1,\dots,\rho_h)$ so that
  \begin{itemize}
  \item $\ind(B)=1$.
  \item The domain $B$ has only non-negative local multiplicities.
  \item $(\x,\rhos_1,\dots,\rhos_h)$ is $(\x,a_1,\dots,a_\ell)$-compatible.
  \end{itemize}
\end{prop}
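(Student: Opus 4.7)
The strategy is a standard combination of the index constraint with admissibility, adapting the analogous finiteness arguments for upper diagrams from~\cite{HolKnot} to the middle-diagram setting. I organize it in three steps.

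\emph{Step 1: The algebraic packets are controlled by the input.} The compatibility condition in Definition~\ref{def:CompatiblePacketDA} pins down $\bIn(\rhos_{k_1})\otimes\cdots\otimes \bIn(\rhos_{k_\ell})$ (up to idempotent) to $a_1\otimes\cdots\otimes a_\ell$. Since $\bIn(\rhos_{k_j})$ is a homogeneous algebra element whose weights record the local multiplicity of $B$ along each $\Zin_i$ contributed by $\rhos_{k_j}$, the total contribution of the algebraic packets to the local multiplicities of $B$ along $\Zin$ is fixed by the $a_j$'s. In particular, it has bounded total weight.

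\emph{Step 2: Bounding the remaining packets via the index.} The packets $\rhos_t$ with $t\notin\{k_1,\dots,k_\ell\}$ each have a concrete effect on the bookkeeping: those of Type~\ref{eq:OddOrbit} and~\ref{eq:EvenOrbit} add positive multiplicity to $B$ along some $\Zin_i$ (contributing a $U_j$ or $z_j$ factor to $\bOut$ in the odd case, via the $\tau$-identification); those of Type~\ref{eq:OutOrbit} add positive multiplicity along some $\Zout$-component and a factor of $U_k$ to $\bOut$; and those of Type~\ref{eq:OutChord} add a length-$1/2$ chord contribution to $\bOut$. The index formula $\ind(B)=e(B)+n_\x(B)+n_\y(B)$, combined with the Maslov-grading identity from~\eqref{eq:CharacterizeMgr} and its $DA$-version, expresses $\ind(B)=1$ as a linear equality in which $\Delta(\bOut)$, the input $\Delta(a_j)$'s, and the Maslov grading difference $\Mgr(\x)-\Mgr(\y)$ appear with definite signs. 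Since each extra packet contributes strictly negatively to $\Delta(\bOut)$ (or, in the case of Type~\ref{eq:EvenOrbit}, strictly positively to the incoming multiplicity which in turn feeds the index formula), and since there are only finitely many possible $\y$, the constraint $\ind(B)=1$ bounds the total number of extra packets and the total boundary multiplicity of $B$.

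\emph{Step 3: Finitely many domains per boundary profile via admissibility.} Once the ending state $\y$ is fixed and the boundary multiplicities of $B$ along $\Zin\cup\Zout$ are bounded, the class of $B$ in $\doms(\x,\y)$ is determined up to adding a periodic domain in the sense of Definition~\ref{def:DA-Periodic-Domain} (the balancing constraint $\weight_{\Zin_i}(P)=\weight_{\Zin_j}(P)$ for $\{i,j\}\in\MatchIn$ is exactly the freedom not already fixed by the boundary data, since matched incoming weights are not individually visible to the input tensor product). Admissibility then rules out any non-zero such periodic domain with only non-negative local multiplicities, so within each class at most finitely many representatives survive the non-negativity requirement.

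The main obstacle is Step 2: the careful verification that each of the four packet types contributes with the right sign to the Maslov/$\Delta$-grading identity so as to bound its multiplicity under $\ind(B)=1$. This amounts to a direct generalization of the bookkeeping behind Proposition~\ref{HK:prop:CurvedTypeD} of~\cite{HolKnot}; the new ingredients, namely the packets that terminate at $\wpt_i$ or $\zpt_i$ markings on the interior of the middle diagram, fit into the same framework once one records their $w_i,z_i$ contributions to $\bOut$ and notes that these also drop $\Delta$-grading by one.
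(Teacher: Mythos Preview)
Your proposal follows essentially the same strategy as the paper: use the index constraint to pin down the $\Delta$-grading of the output, thereby bounding the odd orbit packets and the total weight at $\Zout$; then invoke admissibility of periodic domains to control the remaining freedom.

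There is one over-claim in Step~2 worth flagging. You assert that the index constraint also bounds the number of even orbit packets (Type~\ref{eq:EvenOrbit}), via their contribution to the incoming multiplicity ``which in turn feeds the index formula.'' The paper does \emph{not} make this claim, and with good reason: even orbit packets contribute a factor of $1$ to $\bOut$, so they are invisible to the $\Delta$-grading argument, and adding an even orbit packet together with the corresponding balanced shift in $B$ need not move the index. The paper instead observes that once $\y$, the output $b$, and the odd orbit count are fixed, any two admissible domains differ by a periodic domain in the sense of Definition~\ref{def:DA-Periodic-Domain}---whose definition deliberately allows balanced nonzero weight at $\Zin$, exactly to absorb the even orbit variation. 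Admissibility then finishes. Your Step~3 already contains this observation (your parenthetical about ``matched incoming weights not individually visible to the input'' is precisely the point), so the argument survives; but Step~2 should be trimmed to claim only what the $\Delta$-grading actually sees: bounds on odd orbits, on the $w_i,z_i$ factors, and on the $\Zout$-weight. The even orbit freedom is then handled entirely in Step~3, and for each of the finitely many surviving non-negative $B$ the compatible packet sequences are bounded by the local multiplicities of $B$.
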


\begin{proof}
  By the index requirement, the $\Delta$-grading on the output element
  is determined by $\x$, $\y$, and $(a_1,\dots,a_\ell)$.
  In particular, this gives an upper bound on the number of isolated odd orbits that
  are allowed among thet $\rhos_1,\dots,\rhos_h$, and also on the total weight
  of $B$ at $\Zout$.

  If both $B$ and $B'$ represent actions of $(\x,a_1,\dots,a_\ell)$
  with output $b\otimes \y$; and suppose that they have the same
  number of odd orbits.  Then, $B-B'$ is a periodic domain, in the
  sense of Definition~\ref{def:DA-Periodic-Domain}. Finiteness is now ensured by admissibility.
\end{proof}

\begin{defn}
  \label{def:DAmod}
  Fix the following data:
  \begin{itemize}
    \item an admissible middle diagram $\Hmid$ with matching $\Mmid$,
      with induced one-manifold $\Wmid$
    \item a compatible matching $\MatchIn$, with induced one-manifold $\Win$
    \item an orientation $\orW$ on the one-manifold
      $W=\Wmid\cup\Win$.
    \item a compatible almost-complex structure $J$ on $\Hmid$.
  \end{itemize}
  We abbreviate this data $(\Hmid,\MatchIn,\orW,J)$.
  Let $\DAmod$ be the associated  
  $\IdempRing(n)-\IdempRing(m)$-bimodule structure as specified in
  Equation~\eqref{eq:DAoverIdempotents}.
  For all $k\geq 0$, 
  define maps 
  $\delta^1_{k}\colon
  \DAmod\otimes \Clgin^{\otimes k}\to \DAmod$
  by 
  \begin{align}
\label{eq:DefDA-Action}
\delta^1_{k+1}&(\x,a_1,\dots,a_k)\\
&=
\sum_{
\left\{\begin{tiny}
\begin{array}{r}
\y\in\States \\
(\rhos_1,\dots,\rhos_h)\in \llbracket \x,a_1,\dots,a_k\rrbracket \\
B\in\pi_2(\x,\rhos_1,\dots,\rhos_h,\y)
\end{array}
\end{tiny}\Big| \ind(B,\rhos_1,\dots,\rhos_h)=1\right\}}
\!\!\!\!\!\!\!\!\!\!\!\!\!\!\!\!\!\!\!\!\!\!\!\!\#\UnparModFlow(\x,\y,\rhos_1,\dots\rhos_h)\cdot \bOut(B,\rhos_1,\dots,\rhos_h)\otimes \y.
\nonumber
\end{align}
\end{defn}

The sums in the above map are finite by Proposition~\ref{prop:FiniteSums}.

\begin{prop}
  \label{prop:DAmid}
  Let $(\Hmid,\MatchIn,\orW,J)$
  be the data required to define $\DAmod$, as in Definition~\ref{def:DAmod}.
  The $\IdempRing(2n)-\IdempRing(2m)$-bimodule
  $\DAmod(\Hmid)$, equipped the operations
  \[\delta^1_{\ell+1}\colon \DAmod(\Hmid)\otimes\Clgin^{\otimes \ell}\to \Clgout\otimes\DAmod(\Hmid)\]
  defined above endows $\DAmod(\Hmid)$ with the structure of 
  a curved $\Clg(n,\Mout)-\Clg(m,M)$ $DA$ bimodule,
  where $\Mout$ is the matching on $\{1,\dots,2n\}$ induced by 
  $\Mmid$ and $M$.
  This bimodule is also strictly unital and bounded.
  Moreover, for any two generic choices of $J$, the resulting
  curved $DA$ bimodules are homotopy equivalent.
\end{prop}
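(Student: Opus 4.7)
The plan is to mimic the argument used for Proposition~\ref{HK:prop:DAmid} in~\cite{HolKnot} and Proposition~\ref{prop:CurvedDA} above, the only novelty being the presence of closed link components carrying $\wpt$ and $\zpt$ markings. First I would establish the curved $DA$ bimodule structure relation by examining the ends of the one-dimensional components of the moduli spaces $\UnparModFlow(\x,\y,\rhos_1,\dots,\rhos_h)$ appearing in Equation~\eqref{eq:DefDA-Action}, with $\ind=2$. By a standard Gromov compactness argument in the bordered setting, these ends decompose into: two-story holomorphic buildings (contributing to $\delta^1\circ\delta^1$); collisions of successive incoming Reeb chords at a single boundary puncture (giving the $A_\infty$ action of $\Clgin$); joining of adjacent chords into longer chords (giving the type $D$ action of $\Clgout$ and its differential); and boundary degenerations, which must account for the curvature term $\mu_0 = \sum_{\{r,s\}\in\Mout} U_r U_s$.

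The main new feature beyond the knot case is that some boundary degenerations now occupy one of the regions $A_i$ or $B_i$ associated to a closed component $L_i$ of the link, and hence carry the markings $\wpt_i$ and $\zpt_i$. The heart of the proof, and the step I expect to require the most care, will be verifying that for each closed component exactly two such boundary degenerations exist, one with $\alpha$-boundary (the region $A_i$) and one with $\beta$-boundary (the region $B_i$), that they carry the same output algebra element $w_i z_i$, and hence cancel over $\Field=\Zmod{2}$. This is the direct analogue of the cancellation already used in the proof of Proposition~\ref{prop:CurvedDA}; the enumeration of possible boundary degenerations is controlled by Conditions~\ref{MD:TwoBoundariesApieceA} and~\ref{MD:TwoBoundariesApieceB} in Definition~\ref{def:BetaMatching}, which guarantee that these two types are exactly matched.

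Strict unitality is immediate from the definition of $\llbracket \x,a_1,\dots,a_k\rrbracket$ in Definition~\ref{def:CompatiblePacketDA}: every input algebra element $a_i$ must be realized as $\bIn(\rhos_{k_i})$ for a nontrivial algebraic packet, so if any $a_i$ equals the idempotent unit with $k>1$ there is no compatible packet sequence, and $\delta^1_1(\x)=\x$ follows from the trivial $k=0$ case. Boundedness follows from the grading identities~\eqref{eq:CharacterizeMgr} and~\eqref{eq:CharacterizeAgr}, which extend to the middle setting and yield a linear relation expressing $\Delta(\bOut(B,\rhos_1,\dots,\rhos_h))$ in terms of $\Mgr(\x)-\Mgr(\y)$, $\ind(B,\rhos_1,\dots,\rhos_h)$, and $\sum_i \Delta(a_i)$; since $\ind=1$ and $\Mgr(\x),\Mgr(\y)$ lie in a finite set, an upper bound on the output $\Delta$-grading forces a lower bound on $\sum_i \Delta(a_i)$, as required.

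Finally, independence of the generic almost complex structure will follow by the usual continuation map argument. A generic one-parameter family $J_t$ between $J_0$ and $J_1$ gives rise to parameterized moduli spaces whose counts define a morphism of curved $DA$ bimodules, and a further parameterization gives the chain homotopy showing that composing the forward and backward morphisms is homotopic to the identity. The boundary analysis for these parameterized moduli spaces is identical to the one above, with the same $w_i z_i$ cancellation of boundary degenerations passing through marked regions, and with the degenerations at the endpoints $t=0,1$ recovering the operations of $\DAmod(\Hmid,J_0)$ and $\DAmod(\Hmid,J_1)$ respectively.
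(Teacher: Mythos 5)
Your proposal matches the paper's argument closely: you identify the same key new ingredient (cancellation of $\alpha$-boundary degenerations carrying $w_i z_i$ against the corresponding $\beta$-boundary degenerations, both types pinned down by the $A_i$/$B_i$ decomposition from Conditions~\ref{MD:TwoBoundariesApieceA} and~\ref{MD:TwoBoundariesApieceB}), and you close with the same continuation-map argument for $J$-independence. One small divergence worth noting: for boundedness the paper invokes admissibility, whereas you run a grading argument directly from the index formula; both are reasonable, but in your phrasing ``an upper bound on the output $\Delta$-grading forces a lower bound on $\sum_i\Delta(a_i)$'' the word ``upper'' should be ``lower'' to match the definition of boundedness in the text (a lower bound $M$ on the output grading must force a lower bound $N$ on the total input grading). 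Also, Conditions~\ref{MD:TwoBoundariesApieceA} and~\ref{MD:TwoBoundariesApieceB} live in the definition of marked middle diagrams in Section~\ref{subsec:MarkedMid}, not in Definition~\ref{def:BetaMatching}.
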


\begin{proof}
  The verification of the curved $DA$ bimodule relations follows as
  in~\cite[Proposition~\ref{HK:prop:DAmid}]{HolKnot}.  The key novelty
  is that now there are $\alpha$-boundary degenerations.  contribute
  terms of $w_i z_i\otimes \x$ to $\delta^1_1\circ \delta^1_1(\x)$.
  These cancel with the analogous terms coming from the
  $\beta$-boundary degenerations containing the basepoints $\wpt_i$.
  
  Strict unitality is obvious from the construction. Boundedness
  follows once again from admissibility.

  Varying $J$ induces homotopy equivalences by the usual continuation principle;
  compare for example~\cite[Section~\ref{HK:subsec:VaryCx}]{HolKnot}.
\end{proof}

Let $\lsup{\cClgout}\DAmod(\Hmid)_{\cClgin}$ denote the curved type $DA$ bimodule
associated to $\Hmid$ by the above procedure.

\begin{remark}
  The $DA$ bimodule depends on the data $(\Hmid,\MatchIn,\orW,J)$.
  The above theorem shows that its homotopy type is independent of the
  choice of $J$. The dependence on $\MatchIn$ is crucial. One could
  investigate its independence of the orientation $\orW$, but we do
  not need that in the sequel. Nonetheless, to shorten notation, we
  write simply $\lsup{\cClgout}\DAmod(\Hmid)_{\cClgin}$.
\end{remark}

\subsection{The pairing theorem}
\label{subsec:PairingTheorem}

\begin{defn}
  Fix the following:
  \begin{itemize}
  \item
    an marked upper diagram $\Hup$
  \item
    an marked middle diagram $\Hmid$
  \item an identification $\partial \Hup\cong \dIn\Hmid$.
  \end{itemize}
  This is called {\em compatibly gluable}
  if the matching $\Mmid$ is 
  compatible with the matching on $\dIn\Hmid$ induced by 
  $\Hup$ and the above identification. 
  Given boundary-identified diagrams $\Hmid$ and $\Hup$, their {\em gluing}
  $\Hmid\cup_{\dIn\Hmid\cong \partial \Hup}\Hup$ is naturally an upper diagram.
\end{defn}
  
If $\Hmid$ and $\Hup$ are admissible, their gluing is also admissible.

If $(\Hmid,\Hup,\partial\Hup\cong\dIn\Hmid)$ is
compatibly gluable, there is a one-to-one
correspondence between pairs of states $\x$ and $\y$, where $\x$ is an
partial Heegaard state for $\Hmid$ and $\y$ is an upper Heegaard state
for $\Hup$, and $\alpha(\x)=\{1,\dots,2n\}\setminus\alphaout(\y)$.

We have the following straightforward adaptation of the pairing theorem 
from~\cite[Theorem~\ref{HK:thm:PairDAwithD}]{HolKnot}
(compare also~\cite[Theorem~11]{Bimodules}):

\begin{thm} 
  \label{thm:PairDAwithD}
  Fix compatibly gluable diagrams $(\Hup,\Hmid,\dIn\Hmid\cong \partial\Hup)$,
  where $\Hup$ and $\Hmid$ are admissible.
  Let $\Clg_1=\Clg(\Hup)=\Clgin(\Hmid)$;
  $\Clg_2=\Clgout(\Hmid)$.

  Under the above hypotheses, there is a quasi-isomorphism
  of curved type $D$ structures
  \[\lsup{\Clg_2}\Dmod(\Hmid\#\Hup) 
  \simeq
  \lsup{\Clg_2}\DAmod(\Hmid)_{\Clg_1}\DT \lsup{\Clg_1}\Dmod(\Hup).\]
\end{thm}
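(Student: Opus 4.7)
The plan is to follow the neck-stretching strategy used for the pairing theorem from~\cite{HolKnot}, adapting it to the additional structure coming from the closed components carried by the $\wpt_i$-$\zpt_i$ markings. I first identify generators: a Heegaard state for $\Hmid\#\Hup$ decomposes along the cutting curves into a partial middle state $\x$ for $\Hmid$ together with an upper state $\y$ for $\Hup$ with the matching condition $\alphaout(\x)=\{1,\dots,2n-1\}\setminus\alphain(\y)$, which is precisely the condition for a nonzero summand in the box tensor product via the idempotent structure in Equation~\eqref{eq:DAoverIdempotents}. This gives an identification of underlying $\IdempRing$-modules.

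Next I would choose a one-parameter family of compatible almost complex structures on the glued diagram that stretches the neck along the cutting circles separating $\Hmid$ from $\Hup$. By the standard SFT-type compactness argument used in~\cite{LipshitzCyl} and~\cite[Theorem~\ref{HK:thm:PairDAwithD}]{HolKnot}, as the neck length tends to infinity any sequence of index-one flows in $\Hmid\#\Hup$ converges to a broken configuration consisting of a middle-piece flow and an upper-piece flow joined by a matched sequence of Reeb chords and orbits at the cut. Conversely, gluing yields all such broken configurations. The counts thus match the defining formula of $\DAmod(\Hmid)\DT \Dmod(\Hup)$ provided the Reeb packets appearing at the cut are algebraic in the sense of Definition~\ref{def:AlgebraicPacket}, which is forced because $\alpha$-arcs on the two sides meet the cutting circles at distinct endpoints after tightening. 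The odd/even dichotomy of orbits on $\dIn\Hmid$ is pinned down by the behavior of the curve on the $\Hup$ side: simple orbits on the upper side (types~\eqref{eq:OutOrbit},~\eqref{eq:OutChord} at $\Zout$ of $\Hmid$) pair with odd orbits of type~\eqref{eq:OddOrbit} and chord-orbit packets of type~\eqref{eq:EvenOrbit}, and the terminal-point function $\tau$ correctly translates the algebra element $\bOut$ computed on the $\Hup$ side into the monomial appearing in $\delta^1_{k+1}$ of $\DAmod(\Hmid)$.

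The principal new issue, compared to the unmarked case, is the contribution of boundary degenerations involving the closed components. The $\alpha$-boundary degenerations on each side contribute terms $w_i z_i$ to $\delta^1\circ \delta^1$, which are cancelled by the matching $\beta$-boundary degenerations, exactly as in Proposition~\ref{prop:CurvedDA} and Proposition~\ref{prop:DAmid}. When the two sides are glued, the closed components that lie entirely inside $\Hup$ or entirely inside $\Hmid$ continue to cancel in this way and contribute only to the curvatures of the respective pieces; these curvatures are already reconciled by the quadratic relation $\delta^1\circ \delta^1 = \mu_0\otimes \mathrm{Id}$ on each side. Admissibility of the glued diagram (which is inherited from admissibility of the pieces) together with the boundedness of $\lsup{\cClg_2}\DAmod(\Hmid)_{\cClg_1}$ from Proposition~\ref{prop:DAmid} guarantees that all sums in the tensor product are finite, so the broken count really does equal a well-defined operation.

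The main obstacle I expect is bookkeeping at the cut: verifying that the $\wpt_i$- and $\zpt_i$-exponents in $\bOut$ on the $\Hup$ side exactly match the $w_i,z_i$-weights in the input algebra elements on the $\Hmid$ side, and that the terminal point $\tau(\Zin_i)$ correctly distributes these basepoint monomials across the cut. Once one sets up a careful local model at each cutting circle and tracks which boundary component a holomorphic strip ``exits'' along, this becomes a direct extension of~\cite[Section~\ref{HK:sec:Bimodules}]{HolKnot}; the continuation argument from~\cite[Proposition~\ref{HK:prop:TypeDquasiIso}]{HolKnot} then upgrades the bijection of broken flows to a quasi-isomorphism of curved type $D$ structures, completing the proof.
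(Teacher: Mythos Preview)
Your proposal is correct and follows essentially the same route as the paper: invoke the neck-stretching pairing argument from~\cite[Theorem~\ref{HK:thm:PairDAwithD}]{HolKnot} and observe that the only new feature in the marked setting is the additional $w_i z_i$ contributions from $\beta$-boundary degenerations, which are cancelled by the corresponding $\alpha$-boundary degenerations (as in Propositions~\ref{prop:CurvedDA} and~\ref{prop:DAmid}). The paper's proof is considerably terser---it simply points to the earlier result and records this cancellation---while you spell out the generator matching, the compactness/gluing picture, and the orbit bookkeeping; but the underlying argument is the same. One small notational slip: in your generator identification the gluing is along $\partial\Hup\cong\dIn\Hmid$, so the matching condition should read $\alphain(\x)=\{1,\dots,2m-1\}\setminus\alpha(\y)$ (equivalently $\IdempIn(\x)=\Iup(\y)$), not the version with $\alphaout$ and $\alphain$ swapped.
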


\begin{proof}
  The proof of~\cite[Theorem~\ref{HK:thm:PairDAwithD}]{HolKnot} applies,
  after a few observations. Recall that in that earlier result, the
  tensor product is a curved type $D$ structure, with curvature
  $\sum_{\{i,j\}\in\Matching} U_i U_j$. The curvature term comes from
  $\beta$-boundary degenerations. The arguments from that proof also
  give rise to additional curvature terms $\sum_{i} w_i z_i$; but (as in the case
  of Proposition~\ref{prop:CurvedDA}) these terms are cancelled by
  $\alpha$-boundary degenerations.
\end{proof}

\subsection{Extending diagrams}

Fix the data $(\Hmid,\MatchIn, \orW,J)$ as in Definition~\ref{def:DAmod}
required to form the type DA bimodule $\lsup{\Clgout}\DAmod(\Hmid)_{\Clgin}$

Let $\Hmid$ be a marked middle diagram, 
and let $\HmidEx$ be an extended marked diagram
as in Definition~\ref{def:ExtendedMiddle}.

Define maps
\[ \delta^1_{1+k}\colon \DAmodEx(\HmidEx)\otimes 
{\Blgin}^{\otimes k} \to \Blgout\otimes\DAmodEx(\HmidEx).\]
in the obvious way.

Let 
${\widecheck \iota}\colon \Clgin\to \Blgin$ and
${\widehat \iota}\colon \Clgout\to \Blgout$
be the natural inclusion maps.

\begin{prop}
  \label{prop:ExtendDAPrecise}
  Fix the data $(\Hmid,\MatchIn, \orW,J)$ as in
  Definition~\ref{def:DAmod} required to form the type DA bimodule
  $\lsup{\Clgout}\DAmod(\Hmid)_{\Clgin}$, and let $\HmidEx$ be an
  extension of $\Hmid$. The type $DA$ bimodules associated to $\Hmid$
  and $\HmidEx$ are related by the formula
  \[ \lsup{\Blgout}[\iota]_{\Clgout}~
  \DT \lsup{\Clgout}\DAmod(\Hmid)_{\Clgin}=
  \lsup{\Blgout}\DAmodEx_{\Blgin}(\HmidEx)\DT~\lsup{\Blgin}[\iota]_{\Clgin}.\]
\end{prop}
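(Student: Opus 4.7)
The plan is to verify this proposition by analyzing the generators and holomorphic disks of $\HmidEx$ in comparison with those of $\Hmid$, and showing that the extension corresponds precisely to tensoring with the inclusion bimodules on both sides.

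First I would describe the generators. Because $\beta_0$ separates $\Zmid_0$ from the rest of $\Sigma_0$, and $\Zmid_0$ is reached from the main diagram only by the new arcs $\alphain_0$ and $\alphaout_0$, there are exactly two possible intersection points on $\beta_0$ that can appear in a Heegaard state, namely $\beta_0 \cap \alphain_0$ and $\beta_0 \cap \alphaout_0$. Similarly there are exactly two on $\beta_0'$, coming from $\alphain_{2m}$ and $\alphaout_{2n}$. Thus each state of $\HmidEx$ is canonically described by a state $\x$ of $\Hmid$ together with a binary choice $\varepsilon_{\mathrm{left}} \in \{\mathrm{in},\mathrm{out}\}$ and $\varepsilon_{\mathrm{right}} \in \{\mathrm{in},\mathrm{out}\}$. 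The upshot is a natural identification at the level of left-right idempotents, matching exactly what $[\iota]\DT(-)\DT[\iota]$ does: the extension upgrades both the input and output idempotent rings from $\IdempRing_{1\leq * \leq 2n-1}$-bimodules to $\IdempRing(2n)$-bimodules by allowing $0$ and $2n$ (resp.\ $2m$) as elements of $\alphain(\x)$ or complements of $\alphaout(\x)$.

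Next I would compare the holomorphic disk counts. Because $\beta_0$ and $\beta_0'$ each bound a disk region containing only the boundary components $\Zmid_0$ (resp.\ $\Zmid_1$) and portions of the new $\alpha$-arcs, any new elementary domain in $\HmidEx$ not present in $\Hmid$ must lie in one of these two ``collar'' regions and contribute only trivial bigons and triangles whose effect on $\bOut$ is absorbed by the inclusion $\iota$. Concretely, the domains in $\HmidEx$ are in bijection with domains in $\Hmid$ plus multiples of the two small periodic regions bounded by $\beta_0, \beta_0'$ and their neighboring $\alpha$-arcs; by admissibility, the only index-one flows either descend from flows in $\Hmid$ (with possibly an enlarged chord at the new endpoints, recorded by the extension of $\Blg$ over $\Clg$) or are trivial strips in the collar. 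The assignments of Reeb chords that pass from $\Clgin$ to $\Blgin$ on input, and from $\Clgout$ to $\Blgout$ on output, agree with the combinatorial effect of extension, since the only new chords are those touching $\Zin_1,\Zin_{2m}, \Zout_1, \Zout_{2n}$, which are precisely the ones that move one off of the $\Clg$ idempotents into $\Blg$.

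Putting this together, I would define an identification of underlying $\IdempRing$-bimodules
\[
\lsup{\Blgout}\DAmodEx(\HmidEx)_{\Blgin} \DT \lsup{\Blgin}[\iota]_{\Clgin}
\;\cong\;
\lsup{\Blgout}[\iota]_{\Clgout} \DT \lsup{\Clgout}\DAmod(\Hmid)_{\Clgin},
\]
sending an extended state $(\x,\varepsilon_{\mathrm{left}},\varepsilon_{\mathrm{right}})$ to $\x$ with the appropriate idempotent extension on each side, and then verify that the structure maps agree term by term: the correspondence between domains and between algebraic packets yields a bijection between contributing flow lines, and the output algebra elements agree under the inclusion $\iota$.

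The main obstacle will be the careful bookkeeping of Reeb-chord packets at the new boundary components $\Zmid_0, \Zmid_1$: I must check that in $\HmidEx$ no genuinely new index-one curves arise that would contribute non-inclusion terms. This amounts to observing that any such curve would have to cross $\beta_0$ or $\beta_0'$, and that the collar structure (together with Definition~\ref{def:ExtendedMiddle} ensuring $\beta_0,\beta_0'$ separate the new $Z$-components) forces such curves to be trivial. Once this is verified, the equality of $DA$ structure operations follows directly from the identification of packets and the definition of $\bOut$ extending over $\iota$.
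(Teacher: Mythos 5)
The paper's own proof of this proposition is a single sentence: it defers entirely to the unmarked case, \cite[Proposition~\ref{HK:prop:ExtendDA}]{HolKnot}. There is therefore no detailed argument in this paper against which to match your proposal. Your sketch takes the expected shape of such an argument — identify the generators of $\HmidEx$ with those of $\Hmid$ together with the binary occupation of $\beta_0,\beta_0'$, then argue the holomorphic curve counts match up to the ``collar'' regions near $\Zmid_0,\Zmid_1$ — and this is consistent in spirit with the paper's intent.

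However, two places in your sketch are stated too loosely to serve as a proof. First, the claim that ``domains in $\HmidEx$ are in bijection with domains in $\Hmid$ plus multiples of the two small periodic regions'' is not right as stated: adding $\beta_0,\beta_0'$ and the four new $\alpha$-arcs subdivides some of the pre-existing regions of $\Hmid$ (the arcs $\alphain_0$, $\alphaout_0$ run from $\Zmid_0$ out to $\Zin_1$ and $\Zout_1$), so the right statement is about a forgetful surjection on domains that collapses the collar, not an additive bijection. Second, the assertion that any new index-one curve ``must be a trivial strip in the collar'' is the crux of the argument and cannot be waved at: in bordered Floer this either requires a degeneration/neck-stretching along $\beta_0,\beta_0'$, or a direct identification of moduli spaces after pinning down an almost-complex structure, together with bookkeeping of the Reeb chords and orbits at the new punctures $\Zmid_0,\Zmid_1$ (which enter the $\Blg$ structure constants precisely because the extended diagram outputs over $\Blg$ rather than $\Clg$). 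Until those two points are carried out, the proposal is a plausible outline rather than a proof. For the written version of the argument you are being asked to reproduce, you would do best to consult the proof of the unmarked case in \cite{HolKnot} and verify that the added $\wpt$- and $\zpt$-boundary components interact with the collar degeneration only through the $w_i,z_i$-exponents in $\bOut$, which is the one genuinely new feature of the marked setting.
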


\begin{proof}
  The proof is exactly as in the unmarked
  case~\cite[Proposition~\ref{HK:prop:ExtendDA}]{HolKnot}. 
\end{proof}

\subsection{Specializing to $w_i=z_i=0$}
 
Of course, all of the constructions described above can also be
specialized to $w_i=z_i=0$ for all $i$. The resulting modules are
denoted $\DmodHat(\Hup)$, $\DAmodHat(\Hmid)$, and $\AmodHat(\Hdown)$;
they are defined over the algebras where $w_i=z_i=0$; e.g.  if $\Hmid$
has $2m$ inputs and $2n$ outputs, then
\[ \DAmodHat(\Hmid)=\lsup{\Clg(n)}\DAmodHat(\Hmid)_{\Clg(m)}.\]
Explicitly, the holomorphic disks that contribute in the differential
of $\DAmodHat$ are required to have vanishing local multiplicity at
all $\wpt_i$ and $\zpt_i$; and moreover, they never count isolated
orbits.  (Isolated even orbits were not counted for $\DAmod$; isolated
odd orbits as in
Definition~\ref{def:CompatiblePacketDA}~\ref{eq:OddOrbit} contribute
$w_i$ factors in $\DAmod$, but we have set $w_i=0$ here.)

The above results can be readily specialized to this blocked theory.

For example, we can define $\DAmodExtHat(\HmidEx)$ to be the $w=z=0$
specialization of $\DAmodEx(\HmidEx)$.
Proposition~\ref{prop:ExtendDAPrecise} implies then that
\[ \lsup{\Blg(n)}[\iota]_{\Clg(n)}~ \DT
\lsup{\Clg(n)}\DAmodHat(\Hmid)_{\Clg(m)}=
\lsup{\Blg(n)}\DAmodExtHat_{\Blg(m)}(\HmidEx)\DT~\lsup{\Blg(m)}[\iota]_{\Clg(m)}.\]
Also,  Theorem~\ref{thm:PairDAwithD} has the following specialization:

\begin{thm} 
  \label{thm:PairDAwithDhat}
  Fix compatibly gluable diagrams
  $(\Hup,\Hmid,\dIn\Hmid\cong \partial\Hup)$, where $\Hup$ and $\Hmid$
  are admissible, where Let $\Clg_1=\Clg(m)$; $\Clg_2=\Clg(n)$,
  provided that $\Hmid$ has $2m$ incoming boundary circles and $2n$
  outgoing ones. There is a quasi-isomorphism of curved type
  $D$ structures
  \[\lsup{\Clg_2}\DmodHat(\Hmid\#\Hup) 
  \simeq
  \lsup{\Clg_2}\DAmodHat(\Hmid)_{\Clg_1}\DT \lsup{\Clg_1}\DmodHat(\Hup),\]
\end{thm}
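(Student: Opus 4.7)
The plan is to deduce Theorem~\ref{thm:PairDAwithDhat} directly from Theorem~\ref{thm:PairDAwithD} by specializing all of the variables $w_i$ and $z_i$ to zero. As indicated in the paragraph preceding the statement, the hat modules $\DmodHat(\Hup)$, $\DAmodHat(\Hmid)$, and $\DmodHat(\Hmid\#\Hup)$ are obtained from their unhat counterparts via the change-of-rings functor along the quotient maps $\cClgout \to \Clg(n)$ and $\cClgin \to \Clg(m)$ sending each $w_i$ and $z_i$ to $0$. Since the curvature $\mu_0 = \sum_{\{i,j\}\in\Matching} U_i U_j$ involves neither $w_i$ nor $z_i$, it survives the quotient, so the resulting hat objects are indeed well-defined curved type $D$ and $DA$ bimodules over the quotient algebras.

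First I would check that the box tensor product commutes with this specialization. The tensor product $\lsup{\cClg_2}\DAmod(\Hmid)_{\cClg_1}\DT\lsup{\cClg_1}\Dmod(\Hup)$ is computed by iterating the $DA$ operations $\delta^1_{1+k}$ against outputs of the $D$ structure map, and the sums are finite by the boundedness and unitality established in Proposition~\ref{prop:DAmid}. Setting $w_i = z_i = 0$ on the resulting type $D$ structure precisely discards those summands in which any $w_i$ or $z_i$ factor appears in an intermediate or final algebra output. Inspection of Definition~\ref{def:CompatiblePacketDA} shows that such factors arise only from packets of type~\ref{eq:OddOrbit} (for $w_i, z_i$ associated to the terminal point) and from packets consisting of a single Reeb orbit covering $\wpt_i$ or $\zpt_i$, and these are precisely the operations excluded in passing to $\DAmodHat$. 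Consequently the specialized box tensor product is naturally identified with $\lsup{\Clg_2}\DAmodHat(\Hmid)_{\Clg_1}\DT\lsup{\Clg_1}\DmodHat(\Hup)$; boundedness and unitality of $\DAmodHat$ are inherited from those of $\DAmod$, so this latter box tensor product makes sense.

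Applying Theorem~\ref{thm:PairDAwithD} to the compatibly gluable diagrams yields a quasi-isomorphism of curved type $D$ structures over $\cClg_2$. The proof of that theorem (following~\cite[Theorem~\ref{HK:thm:PairDAwithD}]{HolKnot}) in fact produces an explicit homotopy equivalence, namely a pair of morphisms of type $D$ structures whose compositions are chain-homotopic to the identity. These morphisms and homotopies are built from holomorphic curve counts that are $\cClg_2$-linear by construction, so they descend to the $w_i = z_i = 0$ quotient and remain mutually inverse up to chain homotopy. Combining this with the previous paragraph yields the desired quasi-isomorphism
\[
\lsup{\Clg_2}\DmodHat(\Hmid\#\Hup)\simeq\lsup{\Clg_2}\DAmodHat(\Hmid)_{\Clg_1}\DT\lsup{\Clg_1}\DmodHat(\Hup).
\]

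The main technical issue to verify is that specializing $w_i = z_i = 0$ genuinely commutes with the box product, i.e.\ that no new terms appear and no terms unexpectedly vanish. The slightly delicate point is accounting for the different roles played by orbits of types~\ref{eq:OddOrbit}--\ref{eq:OutChord} in Definition~\ref{def:CompatiblePacketDA}: one must confirm that in the hat theory the allowed operations are exactly those whose algebra contribution $\bOut(B,\rhos_1,\dots,\rhos_h)$ is free of $w_i$ and $z_i$. Once this bookkeeping is made explicit, the rest of the argument is a formal consequence of the quasi-isomorphism provided by Theorem~\ref{thm:PairDAwithD} together with the fact that change-of-rings preserves homotopy equivalences.
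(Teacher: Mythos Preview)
Your proposal is correct and takes essentially the same approach as the paper: the paper's proof consists of the single sentence ``This follows immediately from Theorem~\ref{thm:PairDAwithD}, after setting $w_i=z_i=0$.'' You have simply supplied the bookkeeping that the paper leaves implicit, namely that the $w_i=z_i=0$ specialization commutes with the box tensor product and that homotopy equivalences descend along the quotient.
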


\begin{proof}
  This follows immediately from Theorem~\ref{thm:PairDAwithD}, after
  setting $w_i=z_i=0$.
\end{proof}

\section{Modules for a marked minimum}
\label{sec:AlgMarkedMin}

Our aim here is to define algebraically type $DA$ bimodules.
These will come in two versions:
\begin{itemize}
\item $\lsup{\Blg(n)} {\MarkedMinHat}_{\Blg(n+1)}$
\item $\lsup{\Blg(n)} {\MarkedMin}_{\Blg(n+1)}$ with base ring $\Field[w,z]$, whose $w=z=0$
  specialization is the above module.
\end{itemize}
These will be used to define also curved variants
$\lsup{\cBlg_2} {\MarkedMinHat}_{\cBlg_1}$
and $\lsup{\cBlg_2} {\MarkedMin}_{\cBlg_1}$
(cf. Proposition~\ref{prop:CurvedDAmin}).

We shall also define another module
$\lsup{\nDuAlg_1}\nMarkedMin_{\nDuAlg_2}$
with the property that
\[ \lsup{\cBlg_2}\nMarkedMin_{\cBlg_1}
\DT \lsup{\cBlg_1,\nDuAlg_1}\CanonDD\simeq 
\lsup{\nDuAlg_1}\nMarkedMin_{\nDuAlg_2}
\DT \lsup{\nDuAlg_2,\cBlg_2}\CanonDD \]
(cf. Proposition~\ref{prop:CommuteWithCanon}).

Their relationship with the holomorphicall defined modules will be
established in Section~\ref{subsec:ComputeMarkedMin}.

\subsection{Modules in the algebraically specialized case}

Consider the subring $\IdempRing_{\geq k}\subset \IdempRing(n+1)$
of the idempotent ring of $\Blg(n+1)$.

Let $\Matching$ be a matching on $\{1,\dots,2n+2\}$ with $\{1,2\}\in
\Matching$.  Let $\Matching'$ be the corresponding matching on
$\{1,\dots,2n\}$ with $\{i,j\}\in \Matching'$ if and only if
$\{i+2,j+2\}\in\Matching$.

The idempotent states $\x$ for
$\Blg_1=\Blg(n+1)$ are called {\em preferred} if
\[ 1\leq |\x\cap \{0,1,2\}|\leq 2
\qquad\text{and}\qquad
\x\neq \{0,1\}
\]
Given a preferred idempotent state $\x$ for
$\Blg_1$, the corresponding idempotent $\psi(\x)$ for $\Blg(n)=\Blg_2$
contains $0$ precisely when $|\x\cap \{0,1,2\}|=2$; and for $i>0$, $i\in
\psi(\x)$ precisely when $i+2\in \x$.

We construct now a type $DA$ bimodule
$\lsup{\Blg(n)}{\MarkedMinHat}_{\Blg(n+1)}$, as follows.

As a right $\IdempRing(n+1)$-module, $\MarkedMinHat$ splits
as a direct sum of six modules 
\[ X_0\oplus X_1 \oplus X_2\oplus Y_0\oplus Y_1\oplus Y_2,\]
where $X_i\cong Y_i$ is generated by the preferred idempotents in $I_{\geq i}$.
The left $\IdempRing(n)$-module action is specified by
\[ X\cdot \Idemp{\x}=\Idemp{\psi(\x)}\cdot X \cdot \Idemp{\x}\]
for any $X\in \MarkedMinHat$.

For example, $X_0$ splits naturally into two summands, according to
the whether or not $0$ appears in the left idempotent: a summand where
$0$ appears in the left idempotent and both $0$ and $2$ (but not $1$
appear in the right idempotent; another one where $0$ does not appear
in the left idemptent and $0$ (but not $1$ or $2$) appears in the
right idempotent.

The action by $\Blg(n+1)$ is specified using
Figure~\ref{fig:MarkedMinHat}. That figure appears to specify a
bimodule with incoming algebra $\Blg(1)$ and outgoing algebra $\Blg(0)$. We extend this to a bimodule $\lsup{\Blg(n,\Matching')}{\MarkedMinHat}_{\Blg(n+1,\Matching)}$ as follows.

\begin{figure}
  \[
    \begin{tikzpicture}[scale=1.8]
    \node at (0,3.5) (A2) {$X_1$} ;
    \node at (-2,3) (A3) {$X_2$} ;
    \node at (2,3) (A1) {$X_0$} ;
    \node at (-1,2) (B1) {$Y_0$} ;
    \node at (1,2) (B3) {$Y_2$} ;
    \node at (0,1) (B2) {$Y_1$} ;
    \draw[->](A2) to node[above,sloped] {\tiny{$(U_1,U_2)+(U_2,U_1)$}} (B2);
    \draw[->] (A2)  to node[above,sloped] {\tiny{$R_2$}}  (A3)  ;
    \draw[->] (A2)  to node[above,sloped] {\tiny{$L_1$}}  (A1)  ;
    \draw[->] (A1)  to node[above,sloped] {\tiny{$(R_1,R_2)$}}  (B3)  ;
    \draw[->] (B3)  to node[above,sloped] {\tiny{$L_2$}}  (B2)  ;
    \draw[->] (A3) to node[above,sloped]{\tiny{$(L_2,L_1)$}} (B1) ;
    \draw[->] (B1) to node[above,sloped]{\tiny{$R_1$}} (B2);
    \draw[->] (A2) to node[above, sloped]{\tiny{$(U_2,L_1)$}} (B1) ;
    \draw[->] (A2) to node[above, sloped]{\tiny{$(U_1,R_2)$}} (B3) ;
    \draw[->] (A3) [bend right=30] to node[below, sloped] {\tiny{$(L_2,U_1)$}} (B2) ;
    \draw[->] (A1) [bend left=30] to node[below, sloped] {\tiny{$(R_1,U_2)$}} (B2) ;
  \end{tikzpicture}
    \]
    \caption{Actions on $\MarkedMinHat$}
      \label{fig:MarkedMinHat}
\end{figure}

If $b=\Idemp{\x}\cdot b\cdot \Idemp{\y}\in I_{\geq 1}\cdot
\Blg(n+1)\cdot I_{\geq 1}$ has $\weight_1(b)=\weight_2(b)=0$, then
let $\Psi(b)=b'$ denote the algebra element
$b'=\Idemp{\psi(\x)}\cdot b'\cdot \Idemp{\psi(\y)}$
with $\weight_i(b')=\weight_{i+2}(b)$. 
Define $\Psi(b)$ similarly for $b\in I_{\geq 2}\cdot \Blg(n+1)\cdot I_{\geq 2}$.

For the extension, define
$\delta^1_2(X,b)=\Psi(b)\otimes X$ for all $X\in \MarkedMinHat$. The arrows above are similarly extended; e.g.
\[ \delta^1_3(X_2, L_2 b_1, L_1 b_2)=\Psi(b_1)\cdot \Psi(b_2)\otimes
Y_0.\] In particular, $\delta^1_k\equiv 0$ for $k\geq 4$.

\begin{defn}
  Let $\Alg$, $\Blg$ be algebras and $\lsup{\Blg}X_{\Alg}$ a type $DA$
  bimodule. Fix $a\in \Alg$ and $b\in\Blg$. We say that $X$ is
  {\em{$a$-$b$-equivariant}} if the following conditions hold:
  \begin{itemize}
    \item $\delta^1_1(x,a)=b\otimes x$ for all $x\in X$
    \item $\delta^1_{k+1}(x,a_1,\dots,a_k)=0$ if for all $k>1$
      if $a_i=a$ for some $i$.
  \end{itemize}
  We say that $X$ is {\em annihilated by} $a$  if $X$ is $a$-$0$-equivariant.
\end{defn}

\begin{prop}
  \label{prop:MarkedMinHat}
  The object $\lsup{\Blg_2}{\MarkedMinHat}_{\Blg_1}$ is 
  a type $DA$ bimodule,
  which is $U_r\cdot U_s$-$U_{r+2} U_{s+2}$ equivariant for all
  $r,s\in\{1,\dots,2n\}$; and which is annihilated by $U_1 U_2$.
\end{prop}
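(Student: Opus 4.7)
The plan is to verify three things in turn: the type $DA$ bimodule structure equations, the $U_r U_s$--$U_{r+2}U_{s+2}$ equivariance, and the annihilation by $U_1 U_2$. The operations on $\MarkedMinHat$ naturally split into two families: the ``essential'' arity $2$ and $3$ operations depicted in Figure~\ref{fig:MarkedMinHat}, whose input labels are supported near indices $\{1,2\}$; and the ``extension'' operations $\delta^1_2(X,b) = \Psi(b)\otimes X$ defined whenever $b \in \Blg_1$ has vanishing weights at positions $\{1,2\}$ and compatible idempotents. The map $\Psi$, restricted to its domain, is a partial algebra homomorphism that simply shifts indices: $L_{i+2}\mapsto L_i$, $R_{i+2}\mapsto R_i$, $U_{i+2}\mapsto U_i$, and similarly for idempotents.

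First I would observe that for sequences $(a_1,\dots,a_k)$ consisting entirely of $\Psi$-domain elements, the DA relation reduces to the identity $\Psi(a_1)\cdots\Psi(a_k) = \Psi(a_1\cdots a_k)$, which holds on the nose; hence these cases are automatic, and all higher $\delta^1_{k}$ for $k\geq 4$ can be taken to vanish. The substantive cases are those in which at least one $a_i$ is an ``essential'' label (i.e.\ one of $L_1, L_2, R_1, R_2, U_1, U_2$ or a composite Reeb-type product involving positions $1,2$). Since each arrow in Figure~\ref{fig:MarkedMinHat} has arity at most $3$, I would enumerate the composable pairs of such arrows $A\to B\to C$ among the six generators $\{X_0,X_1,X_2,Y_0,Y_1,Y_2\}$ and check, for each, that the resulting composition of $\delta^1_2$'s agrees with the corresponding direct $\delta^1_3$ from the figure, modulo the $\Blg_1$-multiplication term $\delta^1_2(X,\mu_2(a_i,a_{i+1}))$. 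Extending by $\Psi$ then handles mixed sequences: one checks that multiplying a Figure-arrow label on the right by a $\Psi$-domain element factors compatibly through the subsequent extension arrow, which is a bookkeeping exercise on idempotents and weights.

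The crux is the hexagonal cycle around $X_1 \to X_0, X_2 \to Y_2, Y_0 \to Y_1$ together with the ``diagonal'' $\delta^1_3$ arrows $(U_2,L_1)$, $(U_1,R_2)$, $(L_2,U_1)$, $(R_1,U_2)$ and the symmetric pair $(U_1,U_2) + (U_2,U_1)$ from $X_1$ to $Y_1$. In particular, the DA relation for the input $(X_1, U_1, U_2)$ requires that the algebra-product term $\delta^1_2(X_1, U_1\cdot U_2) = \delta^1_2(X_1, U_1 U_2)$ cancel against the boundary of $\delta^1_3(X_1,U_1,U_2)+\delta^1_3(X_1,U_2,U_1)$. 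Since $U_1 U_2$ has nonzero weight at both positions $1$ and $2$, it lies outside the domain of $\Psi$ and does not occur as any label in Figure~\ref{fig:MarkedMinHat}; hence by construction $\delta^1_2(X,U_1U_2)=0$ for every $X$, giving both the sought annihilation and precisely the cancellation needed for the DA relation to close.

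The equivariance is then immediate: for $r,s\in\{1,\dots,2n\}$, the element $U_{r+2}U_{s+2}\in\Blg_1$ has weights supported in $\{3,\dots,2n+2\}$, hence lies in the domain of $\Psi$, and $\Psi(U_{r+2}U_{s+2}) = U_r U_s \in \Blg_2$ by the weight-shift defining $\Psi$. No higher-arity operation takes such an element as a factor, because the Figure-\ref{fig:MarkedMinHat} arrows only involve labels at indices $1,2$. The main obstacle, as usual in such algebraic constructions, is the patient case-by-case verification of the DA pentagon relation for the composable pairs of Figure-\ref{fig:MarkedMinHat} arrows; this is mechanical, but it is the step where the compatibility of the ``small'' hexagonal structure with the global $\Blg_1$-multiplication is really tested.
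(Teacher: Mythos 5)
Your proposal is correct and fills in exactly the kind of mechanical verification that the paper dismisses with ``these are straightforward computations'': the $\Psi$-homomorphism argument handles all pure extension sequences, the enumeration of Figure-arrow compositions handles the essential cases, and the weight observations at positions $1,2$ give both the annihilation by $U_1U_2$ (not in the domain of $\Psi$) and the $U_rU_s$--$U_{r+2}U_{s+2}$ equivariance ($U_{r+2}U_{s+2}$ is in the domain and $\Psi$ shifts indices). One small caveat: since $\delta^1_1\equiv 0$ on $\MarkedMinHat$, the DA structure relation at two inputs $(X_1,U_1,U_2)$ involves only the iterated $\delta^1_2$ and the $\mu_2$-term $\delta^1_2(X_1,U_1U_2)$ --- the $\delta^1_3$ arrows do not enter until the three-input relation --- so the ``cancellation'' you describe is trivially the statement that both terms vanish, rather than a nontrivial boundary pairing; but this does not affect the correctness of your conclusion.
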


\begin{proof}
  These are straightforward computations.
\end{proof}

\subsection{The algebraically unspecialized case}

We can extend $\lsup{\Blg_2}\MarkedMinHat_{\Blg_1}$
to a module $\lsup{\Blg_2}\MarkedMin_{\Blg_1}$
over $\Field[w,z]$. Generators for the module are the same as before,
but now actions are specified in the following diagram, where $k, \ell$ are arbitrary non-negative integers.
 \begin{equation}
    \begin{tikzpicture}[scale=1.8]
    \node at (0,3.5) (A2) {$X_1$} ;
    \node at (-2.5,2.8) (A3) {$X_2$} ;
    \node at (2.5,2.8) (A1) {$X_0$} ;
    \node at (-1.5,0) (B1) {$Y_0$} ;
    \node at (1.5,0) (B3) {$Y_2$} ;
    \node at (0,-2) (B2) {$Y_1$} ;
    \draw[->] (A2) [bend right=5] to node[above,sloped] {\tiny{$z^\ell \otimes R_2 U^\ell$}}  (A3)  ;
    \draw[->] (A3) [bend right=5] to node[below,sloped] {\tiny{$z^{\ell+1} \otimes L_2 U^\ell$}}  (A2)  ;
    \draw[->] (A2) [bend left=5] to node[above,sloped] {\tiny{$w^k \otimes L_1 U_1^k$}}  (A1)  ;
    \draw[->] (A1) [bend left=5] to node[below,sloped] {\tiny{$w^{k+1} \otimes R_1 U_1^k$}}  (A2)  ;
    \draw[->] (A2) [bend left=5] to node[above,sloped, pos=.2] {\tiny{$w^k z^\ell \left((U_1^{k+1},U_2^{\ell+1}) + (U_2^{\ell+1},U_1^{k+1})\right)$}} (B2);
    \draw[->] (B2) [bend left=5] to node[below,sloped,pos=.2]{\tiny{$w z$}} (A2);
    \draw[->] (A1)  to node[above,sloped] {\tiny{$w^{k}z^{\ell}(R_1 U_1^k,R_2 U_2^\ell)$}}  (B3)  ;
    \draw[->] (B3) [bend right=5] to node[above,sloped] {\tiny{$z^\ell \otimes L_2 U_2^\ell$}}  (B2)  ;
    \draw[->] (B2) [bend right=5] to node[below,sloped] {\tiny{$z^{\ell+1} \otimes R_2 U_2^{\ell}$}}  (B3)  ;
    \draw[->] (A3) to node[above,sloped]{\tiny{$w^{k} z^\ell\otimes (L_2 U_2^\ell,L_1 U_1^k)$}} (B1) ;
    \draw[->] (B1) [bend left=5] to node[above,sloped]{\tiny{$w^k\otimes R_1 U_1^k$}} (B2);
    \draw[->] (B2) [bend left=5] to node[below,sloped]{\tiny{$w^{k+1}\otimes L_1 U_1^k$}} (B1);
    \draw[->] (A2) to node[above, sloped,pos=.4]{\tiny{$w^k z^\ell \otimes (U_2^{\ell+1},L_1 U_1^k)$}} (B1) ;
    \draw[->] (A2) to node[above, sloped,pos=.4]{\tiny{$w^k z^\ell (U_1^{k+1},R_2 U_2^\ell)$}} (B3) ;
    \draw[->] (A3) [bend right=30] to node[below, sloped] {\tiny{$w^k z^\ell(L_2 U_2^\ell,U_1^{k+1})$}} (B2) ;
    \draw[->] (A1) [bend left=30] to node[below, sloped]  {\tiny{$w^k z^\ell(R_1 U_1^k, U_2^{\ell+1})$}} (B2) ;
    \draw[->] (B3) [bend left=15] to node[below,sloped,pos=.6] {\tiny{$w$}} (A3) ;
    \draw[->] (B1) [bend right=15] to node[below,sloped,pos=.6] {\tiny{$z$}} (A1) ;
    \draw[->] (A2) [loop above] to node[above,sloped] {\tiny{$w^k\otimes U_1^k+ z^{\ell+1} \otimes U_2^{\ell+1}$}} (A2);
    \draw[->] (B2) [loop below] to node[below,sloped] {\tiny{$w^k\otimes U_1^k+ z^{\ell+1} \otimes U_2^{\ell+1}$}} (B2);
    \draw[->] (A1) [loop above] to node[above,sloped] {\tiny{$w^k \otimes U_1 ^k$}} (A2);
    \draw[->] (B1) [loop right] to node[above,sloped] {\tiny{$w^k \otimes U_1 ^k$}} (B1);
    \draw[->] (A3) [loop above] to node[above,sloped] {\tiny{$z^\ell \otimes U_2 ^\ell$}} (A3);
    \draw[->] (B3) [loop left] to node[above,sloped] {\tiny{$z^\ell \otimes U_2 ^\ell$}} (B3);
  \end{tikzpicture}
\end{equation}

The arrows are to be interpreted as before; e.g.
\[ \delta^1_3(X_2, L_2 U_2\cdot b_1,L_1 U_1^2)=w^2 z \cdot
\Psi(b_1)\cdot \Psi(b_2)\otimes Y_0.\] Again, $\delta^1_k\equiv 0$ for
$k\geq 4$.

\begin{prop}
  \label{prop:MarkedMin}
  The object $\lsup{\Blg_2[w,z]}{\MarkedMin}_{\Blg_1}$ is a type $DA$
  bimodule, which is annihilated by $U_1 U_2$, and which is $U_r
  U_s$-$U_{r+2} U_{s+2}$-equivariant for all $r,s\in\{1,\dots,2n\}$.
  Specializing $\MarkedMin$ to $w=z=0$
  gives $\lsup{\Blg_2}{\MarkedMinHat}_{\Blg_1}$.
\end{prop}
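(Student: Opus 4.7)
The plan is to verify the four assertions in sequence. The specialization $w=z=0$ claim is read off directly: this substitution kills every arrow of the displayed diagram that carries a positive power of $w$ or $z$, and the remaining arrows plus $k=\ell=0$ loop tails reproduce precisely the operations depicted in Figure~\ref{fig:MarkedMinHat}. For annihilation by $U_1U_2$, one inspects the diagram and observes that no single-slot input and no multi-slot input has an entry divisible by $U_1U_2$: the diagrammed single-slot inputs of the form $L_i b$, $R_i b$, or $U_i^k$ never contain both $U_1$ and $U_2$, the tail extension $\delta^1_2(X,b)=\Psi(b)\otimes X$ only applies when $\weight_1(b)=\weight_2(b)=0$, and the two-slot inputs have weight zero in either the first or the second position of each entry. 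Equivariance $\delta^1_2(X,U_{r+2}U_{s+2})=U_rU_s\otimes X$ for $r,s\in\{1,\ldots,2n\}$ is immediate from $\Psi(U_{r+2}U_{s+2})=U_rU_s$; the vanishing of higher-arity operations on such inputs follows because every higher-arity slot in the diagram begins with a factor of $L_i$ or $R_i$, so $U_{r+2}U_{s+2}$ cannot occupy a slot in isolation.

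The substance of the proof lies in the type $DA$ structure relation. Since $\delta^1_k\equiv 0$ for $k\geq 4$, only finitely many compositions enter the relation for each input tuple. I would organize the verification by source vertex ($X_0,X_1,X_2,Y_0,Y_1,Y_2$) and input length, splitting contributions into three kinds: (i) pure compositions among the ``short'' arrows that survive the $w=z=0$ specialization, which cancel by Proposition~\ref{prop:MarkedMinHat}; (ii) pure tail compositions (both slots absorbed by $\Psi$), which cancel by multiplicativity $\Psi(bb')=\Psi(b)\Psi(b')$ combined with the relation $d\mu_2+\mu_2(\partial\otimes 1+1\otimes\partial)=0$ in $\Blg(n)$; and (iii) mixed compositions in which a diagrammed arrow meets a tail, which reduce to the compatibility of the action of $R_i,L_i,U_i$ with the homomorphism $\Psi$ on weight-zero elements at positions $1,2$. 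The essential new input relative to Proposition~\ref{prop:MarkedMinHat} is the presence of the $\delta^1_1$ arrows $w\colon Y_2\to X_2$, $z\colon Y_0\to X_0$, $wz\colon Y_1\to X_1$ together with the $w$, $z$ decorations on the other arrows and loops. These must be shown to be consistent.

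The main obstacle is the bookkeeping of cancellations involving these new $\delta^1_1$ maps together with powers of $w,z,U_1,U_2$. One first verifies $\delta^1_1\circ\delta^1_1=0$ trivially, since $\delta^1_1$ sends each $Y_i$ into $X_i$ and there is no $\delta^1_1$ exiting any $X_i$. Next one checks the ``mixed'' cancellations: for each short arrow $X_i\to Y_j$ surviving at $w=z=0$, the new $\delta^1_1$ exiting $Y_j$ produces a composite $X_i\to Y_j\to X_j$ whose algebra contribution must be matched by the $w^k z^\ell$ decoration of the $X_i\to X_j$ arrow composed with a loop, and symmetrically for composites starting with $\delta^1_1$; and one must also check the contributions from $\mu_2$ on the input algebra when two adjacent inputs $U_1^{k+1}$ and $U_2^{\ell+1}$ multiply to $U_1^{k+1}U_2^{\ell+1}$, which is annihilated by Part~(2) and therefore provides the term cancelling the $\delta^1_3$ composite $X_1\to Y_1\to X_1$ through the $wz$ arrow. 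Once these finitely many matchings are verified by inspection, the $A_\infty$ relation holds term by term and the proposition is established.
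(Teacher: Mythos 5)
The paper leaves this proposition unproved (the analogous hat-version proof is a single sentence, ``These are straightforward computations''), so your direct case-by-case verification is exactly the intended approach and fills in what the authors omit. Your treatment of the $w=z=0$ specialization, the annihilation by $U_1U_2$, and the equivariance all correctly reduce to reading the weight conditions on $\Psi$ and inspecting the displayed arrows, and your decomposition of the $A_\infty$ check by source vertex and input length is sound.

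One sentence, however, is wrong as stated and should be repaired. You write that $\mu_2(U_1^{k+1},U_2^{\ell+1})=U_1^{k+1}U_2^{\ell+1}$ ``is annihilated by Part~(2) and therefore provides the term cancelling the $\delta^1_3$ composite $X_1\to Y_1\to X_1$ through the $wz$ arrow.'' A term that vanishes cannot provide a cancellation. In the relation for $(X_1; U_1^{k'+1},U_2^{\ell'+1})$, the input-side contribution $\delta^1_2(X_1,\mu_2(U_1^{k'+1},U_2^{\ell'+1}))$ is simply zero (since $\Psi$ is undefined on elements with nonzero weight at positions $1,2$, and the self-loop slots accept only pure powers of a single $U_i$); it does not participate. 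What actually cancels the composite $\delta^1_1\circ\delta^1_3$ through the $wz$ arrow, which contributes $w^{k'+1}z^{\ell'+1}\otimes X_1$, is the product of the two self-loop actions $\delta^1_2\circ\delta^1_2$: namely $\delta^1_2(X_1,U_1^{k'+1})=w^{k'+1}\otimes X_1$ followed by $\delta^1_2(X_1,U_2^{\ell'+1})=z^{\ell'+1}\otimes X_1$, with the outputs multiplied in $\Blg_2[w,z]$. These two terms are equal and kill each other mod $2$, while the $\mu_2$ term is separately zero. The rest of your plan is a correct description of the computation, but you should make sure the bookkeeping in this (and similar) cases records which output-side compositions actually match, rather than attributing the cancellation to a vanishing input-side term.
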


\subsection{Curved modules}
\label{subsec:CurvedModules}

  Fix any matching $\Matching_2$ on $\{1,\dots,2n\}$, and let
  $\Matching_1$ be the matching matching on $\{1,\dots,2n+2\}$
  specified as follows:
  \begin{itemize}
  \item $\{1,2\}\in\Matching_1$
  \item For all $i,j\in\{1,\dots,2n\}$,
    $\{i,j\}\in\Matching_2\iff\{i+2,j+2\}\in\Matching_1$.
  \end{itemize}
  Let $\cBlg_1=\cBlg(n+1,\Matching_1)$,
  $\cBlg_2=\cBlg(n,\Matching_2)$,
  and $\cBlg_2[w,z]$ denote $\cBlg_2$ where the algebra is extended
  by two additional variables $w$ and $z$.

\begin{prop}
  \label{prop:CurvedDAmin}
  For $\cBlg_1$ and $\cBlg_2$ as above, the operations
  $\delta^1_{1+k}$ endow $\lsup{\cBlg_2}{\MarkedMinHat}_{\cBlg_1}$ the
  structure of a curved type $DA$ bimodule.  More generally, the
  operations specified above
  $\lsup{\cBlg_2[w,z]}{\MarkedMin}_{\cBlg_1}$ is a curved $DA$
  bimodule, whose $w=z=0$ specialization is $\MarkedMinHat$.
\end{prop}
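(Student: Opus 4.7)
The plan is to reduce the curved DA bimodule structure equation to the uncurved type DA bimodule relations---already provided by Propositions~\ref{prop:MarkedMinHat} and~\ref{prop:MarkedMin}---together with a single curvature compatibility identity. A curved DA bimodule structure over $(\cBlg_2, \cBlg_1)$ differs from the uncurved one by exactly two additional contributions to the structure equation at each level: the output curvature term $\mu_0^{\Matching_2}\otimes x$ (present only when there are no algebra inputs), and the input curvature insertions
\[ \delta^1_{2+k}(x, a_1,\dots,a_i, \mu_0^{\Matching_1}, a_{i+1},\dots,a_k) \]
for each position $0\leq i\leq k$. I will check that these cancel at every level.

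At level $k=0$, recall from the setup of Section~\ref{subsec:CurvedModules} that
\[ \mu_0^{\Matching_1} = U_1 U_2 + \sum_{\{r,s\}\in\Matching_2} U_{r+2}\,U_{s+2}. \]
The annihilation of $\MarkedMinHat$ by $U_1 U_2$ from Proposition~\ref{prop:MarkedMinHat} kills the first summand when inserted into $\delta^1_2(x,-)$, while the $U_r U_s$-to-$U_{r+2} U_{s+2}$ equivariance gives $\delta^1_2(x, U_{r+2} U_{s+2}) = U_r U_s\otimes x$ for each pair $\{r,s\}\in\Matching_2$. Summing will yield $\mu_0^{\Matching_2}\otimes x$, which cancels the output curvature term in characteristic two.

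At every higher level $k\geq 1$, the curvature-insertion term is a $\delta^1_{2+k}$ with at least two algebra inputs, one of which is either $U_1 U_2$ or some $U_{r+2} U_{s+2}$; these vanish identically by the second clauses of the annihilation and equivariance conditions. No new terms are thus introduced at higher $k$, and the curved DA relations reduce entirely to the uncurved ones provided by Propositions~\ref{prop:MarkedMinHat} and~\ref{prop:MarkedMin}.

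The argument for $\MarkedMin$ over $\cBlg_2[w,z]$ is identical, since the extra variables $w,z$ lie in the output algebra and play no role in the input-side curvature compatibility. The concluding assertion---that the $w=z=0$ specialization of $\MarkedMin$ equals $\MarkedMinHat$---is immediate by comparison of the two diagrams defining the operations. The main (mild) obstacle is purely bookkeeping: matching the decomposition of $\mu_0^{\Matching_1}$ to the annihilation and equivariance clauses of the underlying propositions, which is direct from the way $\Matching_1$ was built from $\Matching_2$ at the start of Section~\ref{subsec:CurvedModules}.
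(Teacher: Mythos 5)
Your proof is correct and follows essentially the same route as the paper's own (very terse) proof: both reduce the curved DA relations to the annihilation and equivariance properties established in Propositions~\ref{prop:MarkedMinHat} and~\ref{prop:MarkedMin}, using the decomposition $\mu_0^{\Matching_1} = U_1U_2 + \sum_{\{r,s\}\in\Matching_2} U_{r+2}U_{s+2}$ and linearity. The paper simply asserts that ``a $\mu_0^{\Matching_1}$-$\mu_0^{\Matching_2}$-equivariant type DA bimodule is simply a curved type DA bimodule''; you have unpacked that step explicitly, which is a reasonable thing to do but not a different argument.
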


\begin{proof}
  Proposition~\ref{prop:MarkedMinHat} ensures that $\MarkedMinHat$ is
  $\mu_0^{\Matching_1}$-$\mu_0^{\Matching_2}$-equivariant type $DA$
  bimodule. But such a bimodule is simply a curved type $DA$ bimodule.
  For $\MarkedMin$, the result follows from
  Proposition~\ref{prop:MarkedMin}.
\end{proof}

\subsection{Commuting with the canonical bimodule}

Let $\nDuAlg_1=\nDuAlg(n+1,\Matching_1)$ and
$\nDuAlg_2=\nDuAlg(n,\Matching_2)$, with $\Matching_1$ and
$\Matching_2$ as in Section~\ref{subsec:CurvedModules}.  Our aim here
is to construct a type $DA$ bimodule
$\lsup{\nDuAlg_1}\nMarkedMin_{\nDuAlg_2}$ which is dual to
$\MarkedMin$.

As a left $\IdempRing_{2n+2,n+2}$ module
$\nMarkedMin$ splits into summands
\begin{equation}
  \label{eq:SixTypes}
  \nMarkedMin=
  X_0'\oplus X_1' \oplus X_2'\oplus Y_0'\oplus Y_1'\oplus Y_2',
\end{equation}
called {\em types}; 
where the idempotents of each summand is complementary to the corresponding summand in $\MarkedMin$; i.e.
\[ \begin{array}{lll}
  X_1'=X'_{\{0,2\}}\oplus X'_{0} & 
  X_0'= X'_{\{1,2\}}\oplus X'_{\{1\}} &
  X_2'= X'_{\{0,1\}} \\
  Y_0'=Y'_{\{1,2\}}\oplus Y'_{\{1\}} &
  Y_1'=Y'_{\{0\}}\oplus Y'_{\{0,2\}} &
  Y_2'=Y'_{\{0,1\}}
  \end{array}
\]
where here the subscript indicates the idempotents; e.g. as a left
$\IdempRing(2n+2,n+2)$-module, we have
\[ X'_{\{1,2\}}\cong \IdempRing(2n+2,n+2)\cdot \left(\sum_{\{\x\big|\x\cap\{0,1,2\}=\{1,2\}\}} \Idemp{\x}\right).\]

Idempotent states $\x$ for $\nDuAlg$ are called {\em preferred} if
\[ 1\leq |\x\cap \{0,1,2\}|\leq 2
\]
Given a preferred idempotent state $\x$ for $\Blg_1(2n+2,n+2)$, there is a
corresponding idempotent $\varphi(\x)$ for $\Blg(2n,n+1)$, which contains
$0$ precisely when $|\x\cap \{0,1,2\}|=2$; and for $i>0$, $i\in
\varphi(\x)$ precisely when $i+2\in \x$.  The bimodule structure is
now specified by requiring
\[ \Idemp{\x}\cdot P = P \cdot \Idemp{\varphi(\x)}.\]

Define 
\[ \delta^1_1 \colon \nMarkedMin\to \nDuAlg_1 \otimes \nMarkedMin \]
as in Figure~\ref{fig:nMarkedMin}.

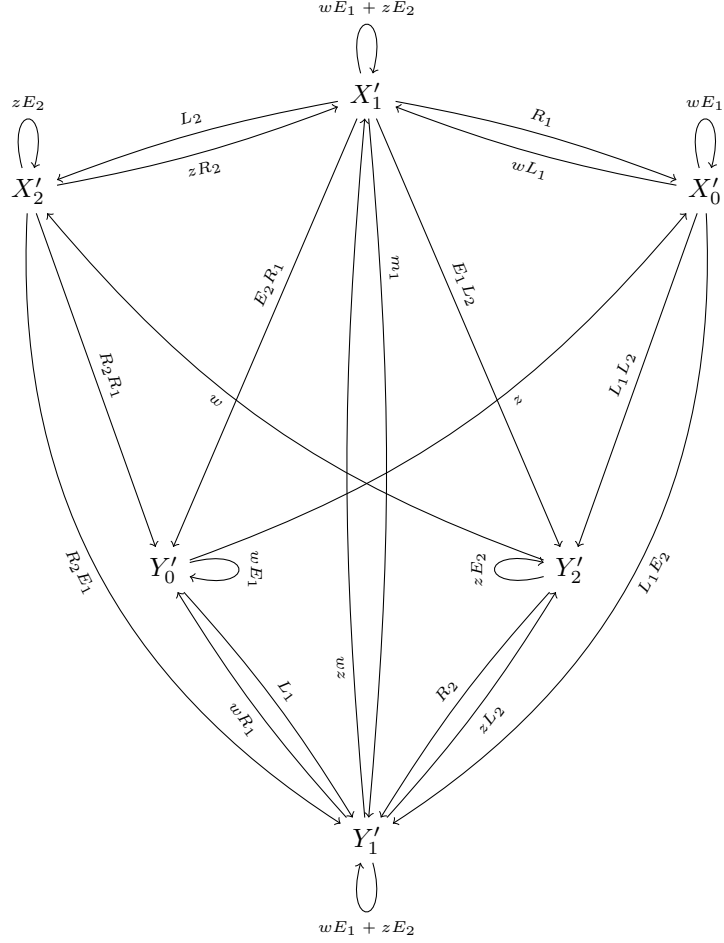
\begin{figure}
  \[
    \begin{tikzpicture}[scale=1.8]
    \node at (0,3.5) (A2) {$X'_1$} ;
    \node at (-2.5,2.8) (A3) {$X'_2$} ;
    \node at (2.5,2.8) (A1) {$X'_0$} ;
    \node at (-1.5,0) (B1) {$Y'_0$} ;
    \node at (1.5,0) (B3) {$Y'_2$} ;
    \node at (0,-2) (B2) {$Y'_1$} ;
    \draw[->] (A2) [bend right=5] to node[above,sloped] {\tiny{$ L_2$}}  (A3)  ;
    \draw[->] (A3) [bend right=5] to node[below,sloped] {\tiny{$z R_2$}}  (A2)  ;
    \draw[->] (A2) [bend left=5] to node[above,sloped] {\tiny{$R_1$}}  (A1)  ;
    \draw[->] (A1) [bend left=5] to node[below,sloped] {\tiny{$w L_1$}}  (A2)  ;
    \draw[->] (A2) [bend left=5] to node[above,sloped, pos=.2] {\tiny{$m_1$}} (B2);
    \draw[->] (B2) [bend left=5] to node[below,sloped,pos=.2]{\tiny{$w z$}} (A2);
    \draw[->] (A1)  to node[above,sloped] {\tiny{$L_1 L_2$}}  (B3)  ;
    \draw[->] (B3) [bend right=5] to node[above,sloped] {\tiny{$R_2$}}  (B2)  ;
    \draw[->] (B2) [bend right=5] to node[below,sloped] {\tiny{$z L_2$}}  (B3)  ;
    \draw[->] (A3) to node[above,sloped]{\tiny{$R_2 R_1$}} (B1) ;
    \draw[->] (B1) [bend left=5] to node[above,sloped]{\tiny{$ L_1$}} (B2);
    \draw[->] (B2) [bend left=5] to node[below,sloped]{\tiny{$w R_1$}} (B1);
    \draw[->] (A2) to node[above, sloped,pos=.4]{\tiny{$E_2 R_1$}} (B1) ;
    \draw[->] (A2) to node[above, sloped,pos=.4]{\tiny{$E_1 L_2$}} (B3) ;
    \draw[->] (A3) [bend right=30] to node[below, sloped] {\tiny{$R_2 E_1$}} (B2) ;
    \draw[->] (A1) [bend left=30] to node[below, sloped]  {\tiny{$L_1 E_2$}} (B2) ;
    \draw[->] (B3) [bend left=15] to node[below,sloped,pos=.6] {\tiny{$w$}} (A3) ;
    \draw[->] (B1) [bend right=15] to node[below,sloped,pos=.6] {\tiny{$z$}} (A1) ;
    \draw[->] (A2) [loop above] to node[above,sloped] {\tiny{$w E_1+ z E_2$}} (A2);
    \draw[->] (B2) [loop below] to node[below,sloped] {\tiny{$w E_1 + z E_2$}} (B2);
    \draw[->] (A1) [loop above] to node[above,sloped] {\tiny{$w E_1$}} (A2);
    \draw[->] (B1) [loop right] to node[above,sloped] {\tiny{$w E_1$}} (B1);
    \draw[->] (A3) [loop above] to node[above,sloped] {\tiny{$z E_2$}} (A3);
    \draw[->] (B3) [loop left] to node[above,sloped] {\tiny{$z E_2$}} (B3);
  \end{tikzpicture}
    \]
    \caption{Specifying $\delta^1_1$ in $\nMarkedMin$}
    \label{fig:nMarkedMin}
\end{figure}

Define 
\[ \delta^1_2 \colon \nMarkedMin\otimes \nDuAlg_2\to \nDuAlg_1 \otimes \nMarkedMin \]
as follows.
Given $P\in \MarkedMin$ in one of the six
summands from Equation~\eqref{eq:SixTypes}, with
$P=\Idemp{\x}\cdot P$,  and given
$b=\Idemp{\varphi(\x)}\cdot b \cdot \Idemp{\y}\in \Blg(2n,n+1)\subset\nDuAlg_2$,
let $\Phi(b)=b'\in\Blg(2n+2,n+2)\subset \nDuAlg_1$ be the algebra element
$b'=\Idemp{\x}\cdot b'\cdot \Idemp{\y}$ with weight
$\weight_i(b')=\weight_{i-2}(b)$ and $\weight_0(b')=\weight_1(b')$.
Then,
\[\delta^1_2(X,b)=\Phi(b)\cdot P',\]
where $P'=\Idemp{\y}\cdot P'$ is in the same type as $X$.
We extend this to an action of all of $\nDuAlg_2$ by requiring 
\[ \delta^1_2(X,E_i\cdot b)=E_{i+2}\cdot \delta^1_2(X,b).\]

\begin{prop}
  \label{prop:CommuteWithCanon}
  The operations give $\nMarkedMin$ the structure of a type $DA$
  bimodule $\lsup{\nDuAlg_1}{\nMarkedMin}_{\nDuAlg_2}$.  This bimodule
  is related to $\lsup{\cBlg_2}\MarkedMin_{\cBlg_1}$ by the identity
  \[ \lsup{\nDuAlg_1}{\nMarkedMin}_{\nDuAlg_2}\DT~ \lsup{\nDuAlg_2,\cBlg_2}\CanonDD\simeq~ 
  \lsup{\nDuAlg_2}{\MarkedMin}_{\nDuAlg_1}\DT~ \lsup{\nDuAlg_1,\cBlg_1}\CanonDD;\]
  similarly, 
  \[ \lsup{\nDuAlg_1}{\nMarkedMinHat}_{\nDuAlg_2}\DT~ \lsup{\nDuAlg_2,\cBlg_2}\CanonDD\simeq~
  \lsup{\nDuAlg_2}{\MarkedMinHat}_{\nDuAlg_1}\DT~ \lsup{\nDuAlg_1,\cBlg_1}\CanonDD.\]
\end{prop}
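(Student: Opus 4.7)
The plan is to verify Proposition~\ref{prop:CommuteWithCanon} in two steps: first, check that the operations defining $\nMarkedMin$ satisfy the curved type $DA$ bimodule relations; second, identify the two tensor products with a common curved type $D$ structure over $\cBlg_2$.

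For the first step, since $\delta^1_k = 0$ for $k \geq 3$, the structure equations reduce to three checks:  compatibility of $(\delta^1_1)^2$ with the curvatures of $\nDuAlg_1$ and $\nDuAlg_2$; compatibility of $\delta^1_2$ with multiplication and with the internal differential $dE_i = U_i$ of $\nDuAlg_2$; and the mixed identity involving $\delta^1_1 \circ \delta^1_2$ and $\delta^1_2 \circ \delta^1_1$.  These are verified by a case analysis over the six types $X_0', X_1', X_2', Y_0', Y_1', Y_2'$ of generators.  The key combinatorial observations are that arrows labeled by a pure $\Blg$-element that traverse a closed path of length two telescope via multiplication in $\nDuAlg$, while the arrows involving some $E_i$ arise exactly when two otherwise cancelling paths close off a $U_i$-factor that must be absorbed via $dE_i = U_i$.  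The identity $[E_1, E_2] = 1$ (which holds since $\{1,2\} \in \Matching_1$) accounts for the loop arrows labeled $wE_1 + zE_2$ and the reverse $wz$ arrow between $Y_1'$ and $X_1'$.

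For the second step, we compute $\lsup{\cBlg_2}\MarkedMin_{\cBlg_1}\DT\lsup{\cBlg_1,\nDuAlg_1}\CanonDD$ and $\lsup{\nDuAlg_1}\nMarkedMin_{\nDuAlg_2}\DT\lsup{\nDuAlg_2,\cBlg_2}\CanonDD$ as curved type $D$ structures.  Both have generators indexed by pairs consisting of one of the six types and a compatible idempotent of $\nDuAlg_1$, which via the bijection between $\x$ and its complement corresponds to a compatible idempotent of $\cBlg_2$.  One then matches the differentials term by term against Figures~\ref{fig:MarkedMin} and~\ref{fig:nMarkedMin} combined with the four summands of $A = \sum_i (L_i\otimes R_i + R_i\otimes L_i) + \sum_i U_i \otimes E_i$: the $U_r U_s$-equivariance of $\MarkedMin$ paired with the $U_i \otimes E_i$ term in $A$ on the left reproduces the $E_i$-decorated arrows of $\nMarkedMin$ paired with $A$ on the right, while the $w, z$ actions on $\MarkedMin$ correspond to the loop arrows on $\nMarkedMin$.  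Setting $w = z = 0$ throughout yields the hatted version.

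The main obstacle is the bookkeeping in the pairing step: each of the six types breaks further into several idempotent substates, and all arrows of Figure~\ref{fig:MarkedMin} must be paired against the four terms of $A$ and matched to the correspondingly paired arrows of Figure~\ref{fig:nMarkedMin}.  Organizing this verification to avoid case explosion—perhaps by grouping arrows according to the total weight of the output algebra element at positions one and two, which determines whether the arrow factors through an $E_i$—will be the primary technical difficulty.
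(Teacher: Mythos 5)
Your plan of direct verification — checking the bimodule relations by a case analysis over the six generator types, then matching the two box tensor products with $\CanonDD$ term by term — is exactly what the paper has in mind when it dismisses the proposition as ``straightforward computations,'' so the approach is the same. One correction worth flagging: $\nDuAlg_1$ and $\nDuAlg_2$ are dg-algebras (with differential $dE_i = U_i$), not curved algebras, so $\lsup{\nDuAlg_1}{\nMarkedMin}_{\nDuAlg_2}$ is an ordinary (uncurved) type $DA$ bimodule and the structure relation involving $(\delta^1_1)^2$ is balanced against the internal differential of $\nDuAlg_1$ rather than any curvature; you do separately invoke $dE_i = U_i$, so this slip would not derail the computation, but the phrase ``curvatures of $\nDuAlg_1$ and $\nDuAlg_2$'' should be excised.
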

\begin{proof}
  These are all straightforward computations.
\end{proof}

\section{Holomorphic computations}
\label{subsec:ComputeMarkedMin}

\begin{figure}[h]
 \centering
 \input{MarkedMin.pstex_t}
 \caption{{\bf Heegaard diagram for a marked minimum.}}
 \label{fig:MarkedMin}
 \end{figure}

The diagram in Figure~\ref{fig:MarkedMin} shows a Heegaard diagram for
a marked minimum. The left portion of this diagram (redrawn on the
left in Figure~\ref{fig:MarkedMinExt}) is stabilized as shown on the
right in Figure~\ref{fig:MarkedMinExt}.  Rather than working directly
with this diagram, we will find it convenient to work with the
isotopic diagram shown in Figure~\ref{fig:NewMarkedMin}.  Since these
diagrams are isotopic (and both are admissible), it follows that their
associated type $DA$ bimodules are homotopy equivalent.  (This
homotopy equivalence can be constructed by a continuation map, similar
to the proof of $J$-invariance in Proposition~\ref{prop:DAmid}.)
We denote this latter
diagram $\HmidEx$.

\begin{figure}[h]
 \centering
 \input{MarkedMinExt.pstex_t}
 \caption{{\bf Heegaard diagram for a marked minimum.}}
 \label{fig:MarkedMinExt}
 \end{figure}

\begin{figure}[h]
 \centering
 \input{NewMarkedMin.pstex_t}
 \caption{{\bf Heegaard diagram for a marked minimum.}}
 \label{fig:NewMarkedMin}
 \end{figure}

After making some conformal
choices, compute enough of that bimodule to compute the associated
type $DD$-bimodule
$\lsup{\cBlg_2}\DAmodEx(\HmidE)_{\cBlg_1}\DT \lsup{\Blg_1,\nDuAlg}\CanonDD$.
Indeed, the main result of this section is the following
\begin{prop}
  \label{prop:ComputeDD}
  For a suitable choice of almost-complex structure on $\HmidE$, there
  is an identification $\lsup{\cBlg_2}\DAmodEx(\HmidEx)_{\cBlg_1} \DT
  \lsup{\cBlg_1,\nDuAlg_1} \CanonDD \simeq
  \lsup{\cBlg_2}\MarkedMin_{\cBlg_1}\DT \lsup{\cBlg_1,\nDuAlg_1}
  \CanonDD$, where $\MarkedMin$ is the algebraically defined bimodule
  constructed in Section~\ref{sec:AlgMarkedMin}.
\end{prop}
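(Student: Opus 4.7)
The strategy is a direct holomorphic curve computation on the diagram $\HmidEx$ of Figure~\ref{fig:NewMarkedMin}, organized so that every count is localized to a small region near the marked minimum. Since we are free to choose $J$, I would work with an almost-complex structure that is stretched along a dividing circle separating the genus-one region containing $\wpt,\zpt$ (and the arc labelled $L_2$) from the rest of the diagram. By the usual Symplectic Field Theory/neck-stretching degeneration (as invoked in the proofs of the DA pairing theorem cited in Section~\ref{subsec:PairingTheorem}), rigid pseudo-holomorphic curves in $\HmidEx$ factor through the nodal curves obtained from a left piece (supported near the marked minimum) glued to a right piece (supported near the rest of the diagram). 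The right piece is responsible for the action that simply shifts the algebra via the reindexing map $\Psi$ (i.e.\ the identity-like $\delta^1_2(X,b)=\Psi(b)\otimes X$ terms), while the left piece produces the specific structure constants, the $w$- and $z$-factors coming from $\wpt$- and $\zpt$-orbit contributions, and the $L_i,R_i,U_i$ labels at the incoming and outgoing boundaries.

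First I would identify Heegaard states with the six types $X_0,X_1,X_2,Y_0,Y_1,Y_2$ of $\MarkedMin$. The local picture offers four intersection points $\XX,\YY,\mathbf{A},\mathbf{B}$; together with a choice of idempotent on the rest of $\HmidEx$ the partition into types is dictated by which of $0,1,2$ lie in the outgoing idempotent, in exact agreement with the definition in Section~\ref{sec:AlgMarkedMin}. Next, I would enumerate the rigid index-one domains of the left piece. By inspection there are a few basic families: small bigons and rectangles traversing the $\alpha$-arcs at $\Zin_1,\Zin_2$ (producing the $L_i,R_i$ arrows among the six types), domains that cross $\wpt$ (contributing $w$), domains that cross $\zpt$ (contributing $z$), and annular/boundary-degenerate configurations that contribute $(U_1,U_2)+(U_2,U_1)$ style $\delta^1_3$ operations. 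These are precisely the arrows in the diagram defining $\MarkedMin$. Matching each family of holomorphic representatives to the corresponding arrow (and checking that no additional rigid curves exist for generic stretched $J$) would reproduce every arrow specified in Section~\ref{sec:AlgMarkedMin}, including the curved self-loops and the bidirectional $wz$-loops.

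Rather than checking the full $DA$ bimodule structure of $\DAmodEx$, I would verify equality only after tensoring with $\CanonDD$, as the proposition requires. This is advantageous because, via Proposition~\ref{prop:CommuteWithCanon}, the target $DD$ bimodule admits the equivalent description $\nMarkedMin \DT \CanonDD$, in which every action contains a single algebra element rather than an arbitrary sequence of inputs. Thus the comparison reduces to matching a single $\delta^1$ with coefficients in $\nDuAlg_1\otimes\nDuAlg_2$, a list essentially in bijection with the left-piece curves enumerated above (with the $E_i$ variables keeping track of even orbits at $\Zin$ as in Definition~\ref{def:CompatiblePacketDA}~\ref{eq:EvenOrbit}).

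The main obstacle is controlling the enumeration of rigid curves carrying multiple east punctures: one has to show that for the stretched $J$ the only index-one contributions with two outgoing chords are the pairs $(U_i,U_j)$ and $(L_i,R_j)$-style rectangles listed in Figures~\ref{fig:MarkedMinHat} and the unspecialized diagram, and that no higher operations $\delta^1_k$ with $k\geq 4$ survive. This I expect to settle by a dimension/area argument together with admissibility: the periodic domains of $\HmidEx$ are explicitly spanned by small regions in the marked piece, and any would-be extra contribution either increases in index past $1$, acquires a negative local multiplicity, or is forced by the $\alpha$- and $\beta$-boundary degenerations (which cancel in pairs as in Propositions~\ref{prop:CurvedDA} and~\ref{prop:DAmid}) to be inert. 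Once all rigid counts are accounted for and matched against the arrows of Section~\ref{sec:AlgMarkedMin}, the curved $DA$ bimodule structure on $\DAmodEx(\HmidEx)\DT\CanonDD$ agrees arrow-for-arrow with $\MarkedMin\DT\CanonDD$, proving the proposition.
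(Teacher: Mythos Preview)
Your overall strategy---enumerate rigid holomorphic curves on $\HmidEx$ and match them against the arrows defining $\MarkedMin$---is the same as the paper's, but several concrete steps in your plan do not match the actual structure of the diagram, and one key mechanism is missing.

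First, the generator count is off. You assert that the four local intersections $\XX,\YY,\Ax,\BB$ together with an idempotent choice produce exactly the six types $X_0,X_1,X_2,Y_0,Y_1,Y_2$. In fact the extended diagram has \emph{twelve} generator types locally (for instance $\Ax_1$, $\Ax_2$, $\lsup{0}\Ax_{1,2}$, $\BB_1$, $\BB_2$, $\lsup{0}\BB_{1,2}$, $\XX_0$, $\XX_2$, $\lsup{0}\XX_{0,2}$, $\YY_0$, $\YY_2$, $\lsup{0}\YY_{0,2}$). Ten of these correspond to the six algebraic types (some types are hit by two Heegaard generators), but two of them---$\YY_2$ and $\BB_2$---do not. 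The paper shows that $\{\YY_2,\BB_2\}$ form an acyclic type~$DA$ submodule (a single bigon $\cald_3$ gives the differential $\YY_2\to\BB_2$, and no positive domain leaves $\BB_2$), and only after quotienting by this submodule does one get a module with the correct generator set. Your plan has no analogue of this cancellation step.

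Second, the neck-stretching you propose does not correspond to the geometry. There is no ``genus-one region'' in $\HmidEx$; the surface is planar. The paper does use a stretched almost-complex structure, but the stretching is \emph{normal to $\beta_2$}, and its purpose is narrow: to ensure that certain specific annuli (those producing the $(U_2,L_1)$, $(R_1,U_2)$, $(U_1,U_2)$, $(U_2,U_1)$ arrows for the $\lsup{0}$-decorated generators) have holomorphic representatives. It is not a global left/right decomposition, and the $\Psi$-action on the far strands is verified directly rather than via an SFT limit.

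Third, the paper's argument contains a nontrivial moduli-space step you do not anticipate: the domain $\cald_1+\cald_2+\cald_3+\cald_5+\cald_6$ from $\Ax_1$ to $\BB_1$ contributes both a $\delta^1_3$ arrow with inputs $(U_1,U_2)$ and a $\delta^1_1$ arrow, and the latter is forced by analyzing ends of a one-dimensional moduli space with an orbit puncture constrained to lie above a chord puncture. This $\delta^1_1$ then cancels the bigon $\cald_4$. A pure ``index/area plus admissibility'' argument will not produce this cancellation.

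Finally, your suggestion to pass through $\nMarkedMin\DT\CanonDD$ via Proposition~\ref{prop:CommuteWithCanon} is not how the paper proceeds (that proposition is used later, in Proposition~\ref{prop:ExtendByOne}, to propagate relevance). The paper instead finishes the unblocked case by a direct grading computation on the $DD$ bimodule: after exhibiting all arrows coming from explicit bigons, rectangles, and annuli, the only Alexander- and $\Delta$-grading-compatible extra terms would be $w\,\YY_0$ in $\delta^1(\XX_0)$ and $z\,\XX_2$ in $\delta^1(\Ax_2)$, and both are excluded by $\delta^1\circ\delta^1=0$. Your sketch of the exclusion step (``dimension/area argument together with admissibility'') is too coarse to reach this conclusion, since the periodic domain $P=\cald_1+\cald_2+\cald_3-\cald_4+\cald_5+\cald_6$ already shows that two distinct positive domains connect $\Ax$ to $\BB$, so positivity alone does not pin down the answer.
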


Before turning to the proof, we make a few notational remarks about
the statement.  Note that $\HmidEx$ is an extended diagram with
$\ell=1$, so its output algebra is $\Clg(n)\otimes
\Field[w_1,z_1]$. Correspondingly, the output algebra $\MarkedMin$ as
defined in Section~\ref{sec:AlgMarkedMin} was $\Clg(n)\otimes
\Field[w,z]$. In the above identification of modules, we are using an
isomorphism of algebras identifying $w_1$ and $z_1$ with $w$ and $z$
respectively.

\subsection{The blocked case}
In this section, we prove the following variant of
Proposition~\ref{prop:ComputeDD}, specialized to $w=z=0$.
(Note that this
special case is sufficient to compute $\HFLa$ for links which, in
turn, determines the Thurston polytope of the underlying link.)

\begin{prop} 
  \label{prop:SpecialComputeDD}
  For a suitable choice of almost-complex structure on $\HmidE$, there
  is a homotopy equivalence
  \begin{equation}
    \label{eq:ComputeBlockedMin}
    \left(\frac{\lsup{\Blg_2}\DAmodEx(\HmidE)_{\Blg_1}}{w=z=0}\right) \DT
    \lsup{\Blg_1,\DuAlg_1} \CanonDD \simeq
    \lsup{\Blg_2}\MarkedMinHat_{\Blg_1}\DT \lsup{\Blg_1,\DuAlg_1}
    \CanonDD.
\end{equation}
\end{prop}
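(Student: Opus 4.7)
I would prove Proposition~\ref{prop:SpecialComputeDD} by choosing a sufficiently degenerate almost-complex structure $J$ on $\HmidE$ (say, one obtained by stretching a neck along a circle that isolates a neighborhood of the marked minimum) and then enumerating the holomorphic disks contributing to $\delta^1_{1+k}$ on the specialized module $\lsup{\Blg_2}\DAmodExtHat(\HmidE)_{\Blg_1}$. The $w=z=0$ specialization discards all disks passing through $\wpt$ or $\zpt$, so in particular the two small bigons at the minimum drop out, and what remains should match, term by term after pairing with $\CanonDD$, the arrows listed in Figure~\ref{fig:MarkedMinHat}.

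The first step is to identify Heegaard states with the six-type decomposition of $\MarkedMinHat$. In the local picture of Figure~\ref{fig:NewMarkedMin} there are exactly four intersection points $\mathbf A, \mathbf B, \XX, \YY$ near the marked minimum, and combining each with the global choice of occupied $\alpha$-arcs gives a bijection with $X_0\oplus X_1\oplus X_2\oplus Y_0\oplus Y_1\oplus Y_2$ that is compatible with both the left and right $\IdempRing$-actions. The elementary rectangles adjacent to the minimum then supply the $L_i, R_i$-labeled arrows of Figure~\ref{fig:MarkedMinHat}; larger rectangles that run along one of the incoming $\Zin$ boundaries, sweep around the minimum, and exit produce the symmetric $(U_r,U_s)+(U_s,U_r)$ actions and the four mixed $(U_i,L_j)$, $(R_j,U_i)$ actions. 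Admissibility of $\HmidE$ and the $\Delta$-grading bound the total weight of each output algebra element, confining the contributing domains to a finite list, and after pairing with $\CanonDD$ any putative $\delta^1_{1+k}$ with $k\geq 3$ is forced to vanish by grading and unitality.

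To package these local counts into the claimed DD-bimodule equivalence, I would invoke Proposition~\ref{prop:ExtendDAPrecise} to descend both sides of \eqref{eq:ComputeBlockedMin} to type $D$ structures over $\cClgout$, apply Lemma~\ref{lem:SimplifyTypeD} to reduce each to a minimal model, and then use Lemma~\ref{lem:Injection} to conclude that the desired equivalence is determined by matching the $\delta^1$ on preferred idempotent classes---which is exactly what the previous paragraph accomplishes.

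The main obstacle will be the four $\delta^1_3$ arrows of Figure~\ref{fig:MarkedMinHat}, which mix a length-one chord on one input boundary with an isolated Reeb orbit on its matched boundary. Verifying that the corresponding moduli spaces are rigid, count exactly once, and exhaust the possibilities requires precise control over broken-curve limits as the neck is stretched---one component of each limit must pass through the small bigon at the minimum, and ruling out stray boundary degenerations or additional limit configurations is the delicate geometric point on which the whole computation ultimately turns.
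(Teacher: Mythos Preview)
Your plan has a genuine gap at the very first step: the local four-point count does \emph{not} give a bijection with the six summands $X_0,\dots,Y_2$ of $\MarkedMinHat$. Combining the four local intersection points $\Ax,\BB,\XX,\YY$ with the possible occupancy patterns on $\alphain_0,\alphain_1,\alphain_2$ yields \emph{twelve} generator types, two more than $\MarkedMinHat$ can absorb. The extra pair, call them $\YY_2$ and $\BB_2$, is connected by a bigon $\cald_3$ that does not cross $\wpt$ or $\zpt$, so it survives the $w=z=0$ specialization; likewise the bigon $\cald_4$ from $\Ax$ to $\BB$ survives. The paper's proof works by showing that $\YY_2$ and $\BB_2$ span an acyclic type~$DA$ \emph{sub}module (no positive domain leaves $\BB_2$, and any positive domain leaving $\YY_2$ either is $\cald_3$ or forces the wrong idempotent), and only after quotienting by this submodule does one obtain something identifiable with $\MarkedMinHat$. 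Your claim that ``the two small bigons at the minimum drop out'' is precisely backwards for these two, and without this quotient step the term-by-term matching you propose cannot succeed.

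There is a second, more delicate point you have not addressed. Even after the quotient, one must check that the $\delta^1_1$ from $\Ax_1$ to $\BB_1$ vanishes. There are \emph{two} positive domains of index one from $\Ax_1$ to $\BB_1$: the bigon $\cald_4$ and the annulus $\cald_1+\cald_2+\cald_3+\cald_5+\cald_6$. That the annulus actually contributes (and hence cancels $\cald_4$) is not obvious from combinatorics; the paper forces it by examining the ends of a one-parameter moduli space of curves with an orbit puncture at $\Zin_1$ and a chord puncture at $\Zin_2$, constrained by $t(p_1)>t(p_2)$. This is the geometric crux, and it is different in character from the broken-curve analysis you anticipate for the $\delta^1_3$ arrows. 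Finally, the appeal to Lemmas~\ref{lem:SimplifyTypeD} and~\ref{lem:Injection} is misplaced here: those lemmas simplify type~$D$ structures over $\cBlg$ and are used later in Section~\ref{sec:ComputeHFL}, not to establish the present proposition, which is a direct enumeration of domains and moduli.
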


\begin{proof}
  Our aim here is to prove this specialization here.  
First note that there are $12$ generator types locally. We denote them
\[
\begin{array}{lllll}
\XX_2& \YY_2& \lsup{0}\XX_{0, 2}& \lsup{0}\YY_{0,2}, & \Ax_2\\
\BB_2& \Ax_1, & \BB_1& \lsup{0}\Ax_{1,2}& \lsup{0}\BB_{1,2}. 
\end{array}\]

We claim there is an almost-complex structure
for which the moduli has the actions displayed in
in Figure~\ref{fig:MarkedMinHat}.
\begin{figure}
  \[
    \begin{tikzpicture}[scale=1.8]
    \node at (0,3.5) (A2) {$\Ax_1,~\lsup{0}\Ax_{1,2}$} ;
    \node at (-2,3) (A3) {$\Ax_2$} ;
    \node at (2,3) (A1) {$\XX_0,~\lsup{0}\XX_{0,2}$} ;
    \node at (-1,2) (B1) {$\YY_0,~\lsup{0}\YY_{0,2}$} ;
    \node at (1,2) (B3) {$\XX_2$} ;
    \node at (0,1) (B2) {$\BB_1,~\lsup{0}\BB_{1,2}$} ;
    \node at (-3,2) (T) {$\BB_2$};
    \node at(-2,1) (B) {$\YY_2$};
    \draw[->](B) to node[above,sloped] {\tiny{$\delta$}} (T) ;
    \draw[->](A3) to node[above,sloped] {\tiny{$\delta$}} (T);
    \draw[->](A2) to node[above,sloped] {\tiny{$(U_1,U_2)+(U_2,U_1)$}} (B2);
    \draw[->] (A2)  to node[above,sloped] {\tiny{$R_2$}}  (A3)  ;
    \draw[->] (A2)  to node[above,sloped] {\tiny{$L_1$}}  (A1)  ;
    \draw[->] (A1)  to node[above,sloped] {\tiny{$(R_1,R_2)$}}  (B3)  ;
    \draw[->] (B3)  to node[above,sloped] {\tiny{$L_2$}}  (B2)  ;
    \draw[->] (A3) to node[above,sloped]{\tiny{$(L_2,L_1)$}} (B1) ;
    \draw[->] (B1) to node[above,sloped]{\tiny{$R_1$}} (B2);
    \draw[->] (A2) to node[above, sloped]{\tiny{$(U_2,L_1)$}} (B1) ;
    \draw[->] (A2) to node[above, sloped]{\tiny{$(U_1,R_2)$}} (B3) ;
    \draw[->] (A3) [bend right=30] to node[below, sloped] {\tiny{$(L_2,U_1)$}} (B2) ;
    \draw[->] (A1) [bend left=30] to node[below, sloped] {\tiny{$(R_1,U_2)$}} (B2) ;
  \end{tikzpicture}
    \]
    \caption{Some actions on $\DAmodExtHat(\HmidEx)$}
      \label{fig:MarkedMinHatHol}
\end{figure}
(In that figure, arrows labelled by $\delta$ represent $\delta^1_1$ actions.)
Moreover, we show that there are no other actions starting at $\YY_2$ or $\BB_2$;
i.e. $\YY_2$ and $\BB_2$ generate a type $DA$ submodule.

To this end, label the domains in the Heegaard diagram as shown in
Figure~\ref{fig:MarkedMinDoms}. 

The rectangle $\cald_3$ represents the differential
\[\begin{tiny}\begin{CD}
  \YY_2@>{\delta}>{\cald_3}> \BB_2.
\end{CD}\end{tiny}\]

\begin{figure}[h]
 \centering
 \input{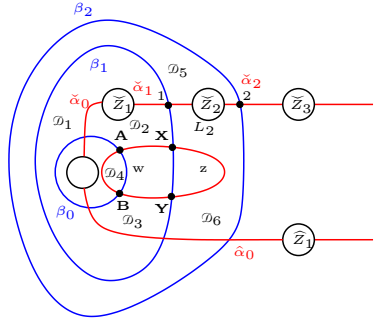}
 \caption{{\bf Heegaard diagram for a marked minimum.}  We have labelled various
   domains.}
 \label{fig:MarkedMinDoms}
 \end{figure}

We now consider the differential that connect the various other
generator types. There are the following polygons:
\[\begin{tiny}\begin{CD}
  \Ax @>{\delta}>{\cald_4}> \BB,\qquad
  \Ax @>{L_1}>{\cald_2}> \XX,\qquad
  \XX_2 @>{L_2}>{\cald_3+\cald_6}>\BB_1\qquad
  \Ax_1 @>{R_2}>{\cald_5}>\Ax_2 \\
  \Ax_2 @>{(L_2,L_1)}>{\cald_2+\cald_6}>\YY_0, \qquad
  \XX_0 @>{(R_1,R_2)}>{\cald_1+\cald_5}> \XX_2, \qquad
  \YY_0 @>{(R_1,R_2)}>{\cald_1+\cald_5}> \YY_2, \qquad
  \Ax_1@>{(U_1,R_2)}>{\cald_1+\cald_2+\cald_5}>\XX_2 
\end{CD}\end{tiny}\]
We also have the following annuli that have holomorphic representatives:
\[\begin{tiny}\begin{CD}
  \Ax_2 @>{(L_2,U_1)}>{\cald_1+\cald_2+\cald_3+\cald_6}> \BB_1,\qquad
  \Ax_1 @>{(U_2,L_1)}>{\cald_2+\cald_5+\cald_6}> \YY_0,\qquad
  \XX_0 @>{(R_1,U_2)}>{\cald_1+\cald_3+\cald_5+\cald_6}>\BB_1 \\
  & \Ax_1@>{(U_1,U_2)}>{\cald_1+\cald_2+\cald_3+\cald_5+\cald_6}>\BB_1 \qquad
  \Ax_1@>{(U_2,U_1)}>{\cald_1+\cald_2+\cald_3+\cald_5+\cald_6}>\BB_1
\end{CD}
\end{tiny}
\]

The latter two relations force also 
another arrow
\[ \begin{CD}
  \Ax_1@>{\delta}>{\cald_1+\cald_2+\cald_3+\cald_5+\cald_6}>\BB_1 
\end{CD}\] which cancels against the $\delta$ induced by the bigon
$\cald_4$.  To see how, we consider the one-dimensional moduli space
${\mathcal M}$ of holomorphic maps from $\Ax_1$ to $\BB_1$ 
with the following properties:
\begin{itemize}
\item 
  The map has shadow
$\cald_1+\cald_2+\cald_3+\cald_5+\cald_6$.
\item The source has  two punctures
$p_1$ and $p_2$, marked by the Reeb orbit $\orb_1$ and a length one Reeb chord covering $\Zin_2$ respectively.
\item $t(p_1)>t(p_2)$.
\end{itemize}
The space ${\mathcal M}$ has an end which corresponds to the arrow
$\Ax_1$ to $\BB_1$ with algebra element $(U_1,U_2)$ (i.e. where $p_1$
goes to the boundary). In principle, it could have another end which
is a broken flowline. One of those components must contain a puncture
$p_1$ marked by $\orb_1$ (and no other punctures).  But there
evidently possible such shadow in the picture. It follows that ${\mathcal M}$ 
has an odd number of ends where $t(p_1)=t(p_2)$. But these are exactly
terms counted in $\delta^1_1$.

We choose an almost-complex structure which is sufficiently stretched
out normal to $\beta_2$. For such a choice of 
almost-complex structures, the analogous annuli
also have holomorphic representatives:
\[\begin{tiny}\begin{CD}
  \lsup{0}\Ax_{1,2} @>{(U_2,L_1)}>{\cald_2+\cald_5+\cald_6}> \lsup{0}\YY_{0,2},\qquad
  \lsup{0}\XX_{0,2} @>{(R_1,U_2)}>{\cald_1+\cald_3+\cald_5+\cald_6}>\lsup{0}\BB_{1,2} \\
\lsup{0}\Ax_{1,2}@>{(U_1,U_2)}>{\cald_1+\cald_2+\cald_3+\cald_5+\cald_6}>\BB_{1,2} \qquad
  \lsup{0}\Ax_{1,2}@>{(U_2,U_1)}>{\cald_1+\cald_2+\cald_3+\cald_5+\cald_6}>\lsup{0}\BB_{1,2}.
\end{CD}
\end{tiny}
\]
which also forces
\[ \begin{CD}
  \lsup{0}\Ax_{1,2}@>{\delta}>{\cald_1+\cald_2+\cald_3+\cald_5+\cald_6}>\lsup{0}\BB_{1,2}
  \end{CD}\]

We have verified the differentials from Figure~\ref{fig:MarkedMinDoms}.
To verify that $\YY_2$ and $\BB_2$ represent a submodule, we argue as
follows.  First, note that there are no positive domains that leave
the generator type $\BB$; and indeed there are no positive domains
leaving $\BB_2$ going to $\BB_1$. Moreover, a positive domain leaving $\YY$
must contain $\cald_3$.  It must then either be simply $\cald_3$, in
which case it represents the displayed differential; or it must also
contain $\cald_1$, in which case it starts at $\YY_0$ (rather than
$\YY_2$). This verifies that $\YY_2$ and $\BB_2$ represents a submodule.
We have seen that this submodule is acyclic. Thus, the entire module 
is homotopy equivalent to the quotient module (where we divide out by
$\YY_2$ and $\BB_2$.

We have now verified all the arrows in the quotient module of
Figure~\ref{fig:MarkedMinHatHol}. There are no other arrows. This can be
readily seen by the following: consider the periodic domain
$P=\cald_1+\cald_2+\cald_3-\cald_4+\cald_5+\cald_6$. This generates
the space of periodic domains locally. All possible domains connecting
generators are obtained from the shown domains plus some number of
copies of $P$. The result always has a negative local multiplicity
somewhere, except in one special case: both $\cald_4$ and
$\cald_1+\cald_2+\cald_3+\cald_5+\cald_6$ represent positive domains
from $\Ax$ to $\BB$. But these two domains have already been
considered above.

The stated homotopy equivalence follows easily.
\end{proof}

\subsection{The unblocked case}

\begin{proof}
  We have the following further bigons:
  \[
  \begin{CD}
  \BB@>{w z}>{\cald_w+\cald_z}>\Ax
  \qquad
  \YY@>{z}>{\cald_z}> \XX
  \end{CD}
  \]
  rectangles
  \[
  \begin{CD}
  \XX_2 @>{w}>{\cald_3+\cald_w}>
  \Ax_2
  &\qquad
  \Ax_2 @>{z\otimes L_2}>{\cald_6+\cald_z}>
  \Ax_1\\
    \BB_1 @>{w\otimes L_1}>{\cald_2+\cald_w}>\YY_0
    &\qquad \XX_0@>{w\otimes R_1}>{\cald_1+\cald_3+\cald_w}>\Ax_1
    \end{CD}
  \]
  annuli:
  \[
  \begin{CD}
  \Ax_1 @>{w\otimes U_1}>{\cald_1+\cald_2+\cald_3+\cald_w}> \Ax_1
  \qquad
  \lsup{0}\Ax_{1,2} @>{w\otimes U_1}>{\cald_1+\cald_2+\cald_3+\cald_w}> \lsup{0}\Ax_{1,2} \\
  \Ax_2@>{z\otimes U_2}>{\cald_5+\cald_6+\cald_z}> \Ax_2
  \qquad
  \lsup{0}\Ax_{1,2}@>{z\otimes U_2}>{\cald_5+\cald_6+\cald_z}> \lsup{0}\Ax_{1,2} \\
  \BB_1 @>{w\otimes U_1}>{\cald_1+\cald_2+\cald_3+\cald_w}> \BB_1
  \qquad
  \lsup{0}\BB_{1,2} @>{w\otimes U_1}>{\cald_1+\cald_2+\cald_3+\cald_w}> \lsup{0}\BB_{1,2} \\
  \BB_2@>{z\otimes U_2}>{\cald_5+\cald_6+\cald_z}> \BB_2
  \qquad
  \lsup{0}\BB_{1,2}@>{z\otimes U_2}>{\cald_5+\cald_6+\cald_z}> \lsup{0}\BB_{1,2} \\
  \XX_0 @>{w\otimes U_1}>{\cald_1+\cald_2+\cald_3+\cald_w}> \XX_0
  \qquad
  \lsup{0}\XX_{0,2} @>{w\otimes U_1}>{\cald_1+\cald_2+\cald_3+\cald_w}> \lsup{0}\XX_{0,2} \\
  \\
  \YY_0 @>{w}>{\cald_1+\cald_2+\cald_3+\cald_w}> \YY_0
  \qquad
  \lsup{0}\YY_{0,2} @>{w}>{\cald_1+\cald_2+\cald_3+\cald_w}> \lsup{0}\YY_{0,2} 
\end{CD}
  \]

  The bigon
  \[
  \begin{CD}
    \XX@>{w}>{\cald_4+\cald_w}>\YY
  \end{CD} \]
  cancels the annulus
  \[ \begin{CD}
  \XX@>{w}>{\cald_5+\cald_6}>\YY
  \end{CD}\]
  (For this last cancellation,
  observe that $\orb_2$ is an odd orbit according to our orbit marking;
  so although $\cald_5+\cald_6$ does not cross the $\wpt$ basepoint,
  it does contribute $w$, since the corresponding rectangle
  contains the orbit $\orb_2$.)

  Contracting the arrow from $\YY_2$ to $\BB_2$, we obtain an action
  corresponding to the juxtaposition of polygons
  \[ \begin{CD}
    \BB_1 @>{R_2}>{\cald_5}>\BB_2 @<{}<{\cald_3}< \YY_2 @>{z}>{\cald_z}>\XX_2.
    \end{CD} \]

  To summarize, we have verified the additional actions shown in
  Figure~\ref{fig:UnblockedActions} (i.e. in addition to the ones from
  Figure~\ref{fig:MarkedMinHatHol}).

  \begin{figure}
    \[ 
    \begin{tikzpicture}[scale=1.8]
    \node at (0,3) (A2) {$\Ax_1,~\lsup{0}\Ax_{1,2}$} ;
    \node at (-2.5,2) (A3) {$\Ax_2$} ;
    \node at (2.5,2) (A1) {$\XX_0,~\lsup{0}\XX_{0,2}$} ;
    \node at (-2.5,0) (B1) {$\YY_0,~\lsup{0}\YY_{0,2}$} ;
    \node at (2.5,0) (B3) {$\XX_2$} ;
    \node at (0,-1) (B2) {$\BB_1,~\lsup{0}\BB_{1,2}$} ;
    \draw[->] (A3) [bend right=5] to node[below,sloped] {\tiny{$z \otimes L_2$}}  (A2)  ;
    \draw[->] (A1) [bend left=5] to node[below,sloped] {\tiny{$w \otimes  R_1 $}}  (A2)  ;
    \draw[->] (B2) [bend left=5] to node[below,sloped,pos=.2]{\tiny{$w z$}} (A2);
    \draw[->] (B2) [bend right=5] to node[below,sloped] {\tiny{$z \otimes R_2 $}}  (B3)  ;
    \draw[->] (B2) [bend left=5] to node[below,sloped]{\tiny{$w\otimes L_1$}} (B1);
    \draw[->] (B3) [bend right=15] to node[below,sloped,pos=.8] {\tiny{$w$}} (A3) ;
    \draw[->] (B1) [bend right=15] to node[below,sloped,pos=.2] {\tiny{$z$}} (A1) ;
    \draw[->] (A2) [loop above] to node[above,sloped] {\tiny{$w\otimes U_1+ z \otimes U_2$}} (A2);
    \draw[->] (B2) [loop below] to node[below,sloped] {\tiny{$w\otimes U_1+ z \otimes U_2$}} (B2);
    \draw[->] (A1) [loop above] to node[above,sloped] {\tiny{$w \otimes U_1$}} (A2);
    \draw[->] (B1) [loop below] to node[below,sloped] {\tiny{$w \otimes U_1$}} (B1);
    \draw[->] (A3) [loop above] to node[above,sloped] {\tiny{$z \otimes U_2$}} (A3);
    \draw[->] (B3) [loop below] to node[below,sloped] {\tiny{$z \otimes U_2$}} (B3);
    \end{tikzpicture}\]
    \caption{Further actions}\label{fig:UnblockedActions}
    \end{figure}
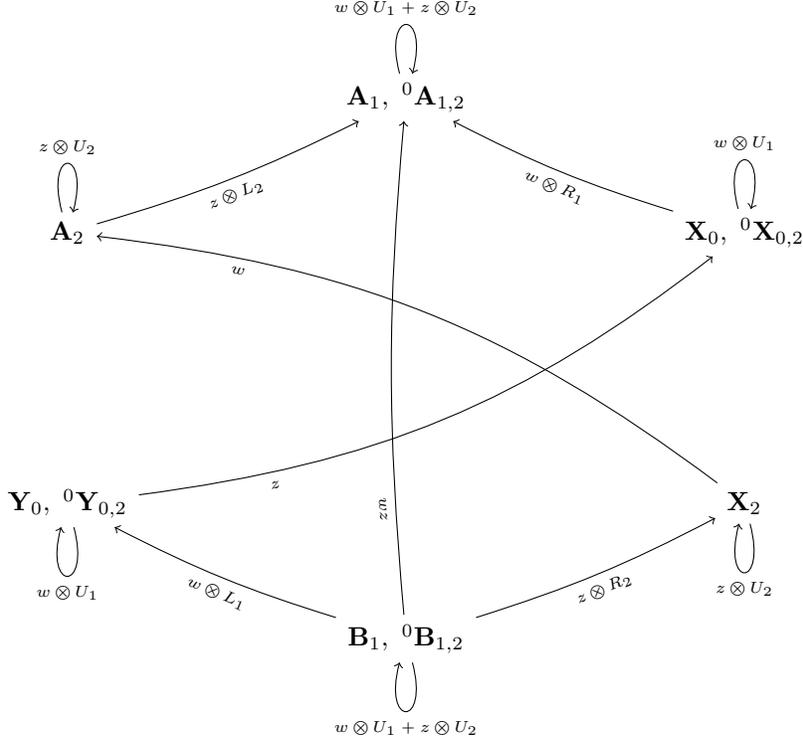

  In fact, the annuli illustrated above also demonstrate that there are
  no other self-arrows in the picture.
  For example, there is no $w\otimes U_1$ action from
  $\Ax_2$ to itself, although $\cald_1+\cald_2+\cald_3+\cald_4$ gives a
  domain. The reason for this is that there is no corner of $\Ax_2$ from
  which we can cut in to $\Zin_1$ to get a $U_1$ action.

  To get the stated identification 
  \[ \lsup{\cBlg_2}\DAmod(\HmidEx)_{\cBlg_1}\DT \lsup{\cBlg_1,\nDuAlg_1}\CanonDD
  \cong
  \lsup{\cBlg_2}\MarkedMin_{\cBlg_1}\DT \lsup{\cBlg_1,\nDuAlg_1}\CanonDD,\]
  we must argue that there are no further arrows in
  $\lsup{\cBlg_2}\DAmod(\HmidEx)_{\cBlg_1}\DT \lsup{\cBlg_1,\nDuAlg_1}\CanonDD$.
  It helps to observe that we already have no further arrows
  when we set $w=z=0$; and there are no additional self-arrows.

  The only other additional arrows that could be allowed by grading
  considerations. Consider $\lsup{\cBlg_2}\DAmod(\HmidEx)_{\cBlg_1}\DT~
  \lsup{\cBlg_1,\nDuAlg_1}\CanonDD$.  The relative Alexander gradings
  are specified by
  \[
    \Agr(\Ax_2)=\Agr(\YY_0)=\OneHalf \qquad
    \Agr(\Ax_1)=\Agr(\BB_1)=0 \qquad
    \Agr(\XX_0)=\Agr(\XX_2)=-\OneHalf,
    \]
  and $\Delta$-gradings by:
  \[ 
    \Mgr(\Ax_1)=\OneHalf \qquad
    \Mgr(\Ax_2)=\Mgr(\YY_0)=\Mgr(\XX_0)=\Mgr(\XX_2)=0
    \qquad 
    \Mgr(\BB_1)=-\OneHalf.
    \]
    Note also that $\Agr(w)=-1$, $\Mgr(w)=-1$; $\Agr(z)=1$,
    $\Mgr(z)=-1$.  Thus, the gradings and idempotents could allow
    $w\YY_0$ to appear in $\delta^1(\XX_0)$ and $z \XX_2$ to appear in
    $\delta^1(\Ax_2)$.  (Observe here that $\delta^1$ is taken with
    respect to the type $DD$ bimodules
    $\lsup{\cBlg_2}\DAmod(\HmidEx)_{\cBlg_1}\DT
    \lsup{\cBlg_1,\nDuAlg_1}\CanonDD$;
    whose generators we simply identify with the generators of 
    $\lsup{\cBlg_2}\DAmod(\HmidEx)_{\cBlg_1}$.)

    But both possibilities are excluded by $\delta^1\circ \delta^1=0$.
  
\end{proof}

\section{Computing link Floer homology}
\label{sec:ComputeHFL}

Recall that if $\DiagUp$ is an upper knot diagram with associated
curved algebra $\cClg$, in~\cite{HolKnot}, we defined a curved type
$D$ structure $\lsup{\cClg}\DmodAlg(\DiagUp)$. This was defined by
slicing up the diagram $\DiagUp$ into elementary pieces (crossings,
cups, and caps), associating algebraically defined curved type $DA$
bimodules (over $\Clg$) to all of those pieces, and then tensoring
them together.  It was then proved
in~\cite[Theorem~\ref{HK:thm:ComputeD2}]{HolKnot} that if $\DiagUp$ is
an upper diagram in bridge position and $\Hup$ is its associated
Heegaard diagram, then
\[ \lsup{\cClg}\Dmod(\Hup)\simeq
~\lsup{\cClg}\DmodAlg(\DiagUp).\]

In fact, the algebraic bimodules from~\cite{HolKnot} can all be
extended to $\cBlg$.  Bearing this in mind, we have the following
alternative characterization of $\lsup{\cClg}\DmodAlg(\DiagUp)$. Tensor
the algebraic bimodules over $\cBlg$ to arrive at a type $D$ structure
over $\Blg$, which we will denote $\lsup{\cBlg}\DmodAlg(\DiagUp)$. Then,
$\lsup{\cClg}\DmodAlg(\DiagUp)$ is the type $D$ structure
characterized by the property that
\[ \lsup{\cBlg}\DmodAlg(\DiagUp)\simeq~
\lsup{\cBlg}\iota_{\cClg}\DT~\lsup{\cClg} \DmodAlg(\DiagUp). \]

We can now extend the definition of $\lsup{\cBlg}\DmodAlg(\DiagUp)$ to
canonically marked upper diagrams, by declaring the algebraic bimodule
(over $\cBlg$) associated to a marked minimum to be the bimodule
defined in Section~\ref{sec:AlgMarkedMin}.

\begin{thm}
  \label{thm:ComputeMarkedUpper}
  Let $\Hup$ be a canonically marked upper diagram in bridge position
  and $\Hup$ be its associated marked upper Heegaard diagram,
  then there is a type $D$ structure
  $\lsup{\cClg}\DmodAlg(\DiagUp)$ uniquely characterized by the property that
  \[ \lsup{\cBlg}\DmodAlg(\DiagUp)\simeq~ \lsup{\cBlg}\iota_{\cClg}\DT~
  \lsup{\cClg}\DmodAlg(\DiagUp). \] Moreover, there is an identification
  \[ \lsup{\cClg}\Dmod(\Hup)\simeq~\lsup{\cClg}\DmodAlg(\DiagUp).\]
\end{thm}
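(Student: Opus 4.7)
The plan is to decompose $\DiagUp$ into a vertical stack of elementary pieces (maxima at the top, followed by a sequence of crossings, unmarked minima, and marked minima arranged so that each marked minimum connects the leftmost two strands), and apply the pairing theorem iteratively to reduce the computation to understanding each piece. The associated Heegaard diagram $\Hup$ similarly decomposes as an upper diagram $\Hup_{\mathrm{top}}$ for the caps at the top, sitting above a stack of middle diagrams $\Hmid_1,\ldots,\Hmid_k$.

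First I would establish existence and uniqueness of $\lsup{\cClg}\DmodAlg(\DiagUp)$ formally. Applying Lemma~\ref{lem:SimplifyTypeD} to $\lsup{\cBlg}\DmodAlg(\DiagUp)$ produces a homotopy equivalent type $D$ structure whose differential lies in $\cBlg_+\otimes X'$. Inspection of the algebraic bimodules from \cite{HolKnot} together with the marked-minimum bimodules of Section~\ref{sec:AlgMarkedMin} shows that the generators of $\DmodAlg(\DiagUp)$ are supported on idempotent states in $\IdempRing_{1\leq *\leq 2n-1}$, so the unlabeled lemma immediately following Lemma~\ref{lem:Injection} forces $\delta^1$ to take values in $\cClg_+\otimes X'$. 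This furnishes $\lsup{\cClg}\DmodAlg(\DiagUp)$, and Lemma~\ref{lem:Injection} gives uniqueness.

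For the equivalence $\lsup{\cBlg}\Dmod(\Hup)\simeq \lsup{\cBlg}\DmodAlg(\DiagUp)$, iterate the pairing theorem (Theorem~\ref{thm:PairDAwithD}) to obtain
\[ \lsup{\cBlg}\Dmod(\Hup) \simeq \lsup{\cBlg}\DAmod(\Hmid_1)\DT \cdots \DT \DAmod(\Hmid_k) \DT \Dmod(\Hup_{\mathrm{top}}). \]
For each non-minimum slice, the holomorphic DA bimodule is identified with its algebraic counterpart by the corresponding results of \cite{HolKnot}. For each marked minimum, Proposition~\ref{prop:ComputeDD} provides the identification only after tensoring with $\CanonDD$; I would first use Proposition~\ref{prop:ExtendDAPrecise} to descend from $\DAmodEx(\HmidEx)$ back to $\DAmod(\Hmid)$, and then upgrade the DD-level comparison to a DA-level equivalence $\DAmod(\Hmid)\simeq \MarkedMin$ by invoking the fact that $\CanonDD$ is invertible in the bordered sense, so that tensoring with it is faithful on homotopy classes of curved type $DA$ bimodules. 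Tensoring these piecewise equivalences together and restricting along $\iota\colon \cClg\hookrightarrow \cBlg$, Lemma~\ref{lem:Injection} then yields $\lsup{\cClg}\Dmod(\Hup)\simeq \lsup{\cClg}\DmodAlg(\DiagUp)$.

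The main obstacle will be the upgrade step from Proposition~\ref{prop:ComputeDD}, which gives a DD-level equivalence after tensoring with $\CanonDD$, to the DA-level identification of marked-minimum bimodules needed here. Making this rigorous requires the invertibility of $\CanonDD$ as a curved DD bimodule, which relies on the bordered machinery developed in earlier work in this program. Everything else is routine bookkeeping: compatibility of the pairing theorem with homotopy equivalences of its input bimodules, and descent from $\cBlg$ to $\cClg$, are automatic consequences of the algebraic lemmas of Section~\ref{sec:Algebra}.
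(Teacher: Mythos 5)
Your proposal correctly identifies the structure of the argument up through the application of Proposition~\ref{prop:ComputeDD}, but the step you flag as "the main obstacle" — upgrading the $DD$-level comparison to a $DA$-level identification $\DAmod(\Hmid)\simeq\MarkedMin$ via invertibility of $\CanonDD$ — is precisely the point where your route diverges from the paper's, and it is the weakest link. Invertibility of $\CanonDD$ as a curved $DD$ bimodule over $(\cBlg,\nDuAlg)$ is neither established in this paper nor verifiably available in the cited sources in the form you would need; you acknowledge leaning on it but supply no argument. The paper is structured specifically so that this stronger statement is never required.

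What the paper does instead is introduce the notion of \emph{relevance}: a type $D$ structure $\lsup{\cClg}X$ is relevant if $\lsup{\cBlg}\iota_{\cClg}\DT~\lsup{\cClg}X\simeq Z_{\nDuAlg}\DT~\lsup{\nDuAlg,\cBlg}\CanonDD$ for some $\Ainfty$ bimodule $Z_{\nDuAlg}$. The proof of Theorem~\ref{thm:ComputeMarkedUpper} is an induction on the number of link components; the inductive step is Proposition~\ref{prop:ExtendByOne}, whose proof is a string of tensor-product manipulations: given relevance of $X$, one writes $\iota\DT(\DAmod(\Hmid)\DT X)$, uses Proposition~\ref{prop:ExtendDAPrecise} to replace the restricted $\DAmod$ by $\DAmodEx(\HmidEx)$, substitutes $Z\DT\CanonDD$ for $\iota\DT X$ by relevance, slides $\DAmodEx(\HmidEx)$ past $Z$ by associativity, and \emph{then} Proposition~\ref{prop:ComputeDD} applies directly because $\DAmodEx(\HmidEx)$ meets $\CanonDD$ in exactly the combination the proposition addresses. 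No cancellation of $\CanonDD$ is ever performed. The propagation of relevance to the output — needed to continue the induction — comes from Proposition~\ref{prop:CommuteWithCanon}, which supplies a dual bimodule $\nMarkedMin$ over $\nDuAlg$ commuting appropriately with $\CanonDD$. Uniqueness of the resulting $\lsup{\cClg}Y$ then follows from Lemma~\ref{lem:Injection}, as you correctly anticipate.

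In short: your global plan (slice, pair, compare piecewise, descend via $\iota$) is in the right spirit, but the key mechanism is different and your resolution of the $DD$-to-$DA$ gap is not justified. Absorb the lesson that the relevance condition is doing real work here: it lets the comparison stay at the $DD$ level, where the holomorphic computation of Proposition~\ref{prop:ComputeDD} is actually proved, rather than asking for a $DA$-level identification that would require an invertibility theorem you do not have.
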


The above result will be seen as a consequence of the following
proposition.
As in~\cite{HolKnot},
a type $D$ module $\lsup{\cClg}X$ is called {\em relevant} if
there is an $\Ainfty$ bimodule $Z_{\nDuAlg}$ with the property that
\[ \lsup{\cBlg}\iota_{\cClg}\DT~ \lsup{\cClg}X
\simeq Z_{\nDuAlg} \DT \lsup{\nDuAlg,\cBlg}\CanonDD.\]

\begin{prop}
  \label{prop:ExtendByOne}
  If $\lsup{\cClg_1}X$ is relevant, then 
  $\lsup{\cClg_2}Y=\lsup{\cClg_2}\DAmod(\Hmid)_{\cClg_1}\DT\lsup{\cClg_1}X$
  is uniquely characterized up to homotopy by the property that
  \begin{equation}
    \label{eq:ComputeY}
    \lsup{\cBlg_2}\iota_{\cClg_2}\DT~\lsup{\cClg_2}Y\simeq 
    \lsup{\cBlg_2}\MarkedMin_{\cBlg_1}\DT~ \lsup{\cBlg_1}\iota_{\cClg_1}\DT \lsup{\cClg_1} Y.
  \end{equation}
  Moreover, $\lsup{\cClg_2}Y$ is relevant.
\end{prop}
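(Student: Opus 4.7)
The plan is to deduce Equation~\eqref{eq:ComputeY} from a chain of bimodule identifications, then obtain uniqueness from Lemma~\ref{lem:Injection} and relevance of $Y$ from Proposition~\ref{prop:CommuteWithCanon}. Starting from the definition $\lsup{\cClg_2}Y = \lsup{\cClg_2}\DAmod(\Hmid)_{\cClg_1}\DT \lsup{\cClg_1}X$, I would first apply the extension functor $\iota_{\cClg_2}$ and use Proposition~\ref{prop:ExtendDAPrecise} to push the extension past the $DA$ bimodule, yielding
\[
\lsup{\cBlg_2}\iota_{\cClg_2}\DT \lsup{\cClg_2}Y \;\simeq\; \lsup{\cBlg_2}\DAmodEx(\HmidEx)_{\cBlg_1}\DT \lsup{\cBlg_1}\iota_{\cClg_1}\DT \lsup{\cClg_1}X.
\]
By the relevance hypothesis on $X$, the tail $\lsup{\cBlg_1}\iota_{\cClg_1}\DT \lsup{\cClg_1}X$ is homotopy equivalent to $Z_{\nDuAlg_1}\DT \lsup{\nDuAlg_1,\cBlg_1}\CanonDD$. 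Using associativity of $\DT$, I would apply Proposition~\ref{prop:ComputeDD} at the level of $DD$ bimodules to replace $\lsup{\cBlg_2}\DAmodEx(\HmidEx)_{\cBlg_1}\DT \lsup{\cBlg_1,\nDuAlg_1}\CanonDD$ with $\lsup{\cBlg_2}\MarkedMin_{\cBlg_1}\DT \lsup{\cBlg_1,\nDuAlg_1}\CanonDD$, then fold the relevance substitution back up to arrive at Equation~\eqref{eq:ComputeY}.

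Uniqueness of $\lsup{\cClg_2}Y$ up to homotopy is then immediate: if $\lsup{\cClg_2}Y'$ is any other type $D$ structure satisfying Equation~\eqref{eq:ComputeY}, then
\[
\lsup{\cBlg_2}\iota_{\cClg_2}\DT \lsup{\cClg_2}Y \;\simeq\; \lsup{\cBlg_2}\iota_{\cClg_2}\DT \lsup{\cClg_2}Y',
\]
and Lemma~\ref{lem:Injection} yields $\lsup{\cClg_2}Y\simeq \lsup{\cClg_2}Y'$.

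For relevance of $Y$, I would return to the right-hand side of Equation~\eqref{eq:ComputeY}, substitute the relevance of $X$ once more to introduce $\CanonDD$ on the right, and invoke Proposition~\ref{prop:CommuteWithCanon} to swap
\[
\lsup{\cBlg_2}\MarkedMin_{\cBlg_1}\DT \lsup{\cBlg_1,\nDuAlg_1}\CanonDD \;\simeq\; \lsup{\nDuAlg_1}\nMarkedMin_{\nDuAlg_2}\DT \lsup{\nDuAlg_2,\cBlg_2}\CanonDD.
\]
Reassociating, the resulting $A_\infty$-module witness is $Z' := Z_{\nDuAlg_1}\DT \lsup{\nDuAlg_1}\nMarkedMin_{\nDuAlg_2}$, a right $A_\infty$-module over $\nDuAlg_2$, exhibiting $Y$ as relevant. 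The main obstacle is purely bookkeeping: one must keep careful track of which algebra acts on which side as one shuttles between type $DA$, type $D$, and type $DD$ structures and commutes $\DAmodEx$, $\MarkedMin$, and $\CanonDD$ past one another. Boundedness of $\DAmod(\Hmid)$ and $\MarkedMin$ (Propositions~\ref{prop:DAmid} and~\ref{prop:CurvedDAmin}) ensures that all tensor products appearing in the argument converge, so there are no analytic subtleties beyond those already handled in~\cite{HolKnot}.
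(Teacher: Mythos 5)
Your proposal is correct and follows essentially the same chain of identifications as the paper's proof: unwind the definition of $Y$, use Proposition~\ref{prop:ExtendDAPrecise} to push the extension functor past the $DA$ bimodule, substitute the relevance witness for $X$, commute and apply Proposition~\ref{prop:ComputeDD} to swap in $\MarkedMin$, then appeal to Lemma~\ref{lem:Injection} for uniqueness and Proposition~\ref{prop:CommuteWithCanon} for relevance of $Y$ with witness $Z_{\nDuAlg_1}\DT \lsup{\nDuAlg_1}\nMarkedMin_{\nDuAlg_2}$. The only difference is cosmetic: you cite Proposition~\ref{prop:ExtendDAPrecise} explicitly for the second step, which the paper uses but does not name in its enumerated list.
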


\begin{proof}
  First, we verify Equation~\eqref{eq:ComputeY} (using, in order, the
  definition of $Y$, relevance of $X$, commutativity of $\DT$,
  Proposition~\ref{prop:ComputeDD}, and the relevance of $X$):
  \begin{align*}
    \lsup{\cBlg_2}\iota_{\cClg_2}\DT~ \lsup{\cClg}Y&=
    \lsup{\cBlg_2} \iota_{\cClg_2}\DT~
    \lsup{\cClg_2}\DAmod(\Hmid)_{\cClg_1}\DT~ \lsup{\cClg_1} X\\
    &\simeq
    \lsup{\cBlg_2}\DAmodEx(\HmidEx)_{\cBlg_1}\DT~
    \lsup{\cBlg_2} \iota_{\cClg_2}\DT~\lsup{\cClg_1} X \\
    &\simeq
    \lsup{\cBlg_2}\DAmodEx(\HmidEx)_{\cBlg_1}\DT~
    (Z_{\nDuAlg_1}\DT~
    \lsup{\cBlg_1,\nDuAlg_1}\CanonDD) \\
    &\simeq
    Z_{\nDuAlg_1}\DT
    (\lsup{\cBlg_2}\DAmodEx(\HmidEx)_{\cBlg_1}\DT~
    \lsup{\cBlg_1,\nDuAlg_1}\CanonDD) \\
    &\simeq
    Z_{\nDuAlg_1}\DT
    (\lsup{\cBlg_2}\MarkedMin_{\cBlg_1}\DT~
    \lsup{\cBlg_1,\nDuAlg_1}\CanonDD) \\
    &\simeq
    \lsup{\cBlg_2}\MarkedMin_{\cBlg_1}\DT~
    (Z_{\nDuAlg_1}\DT~
    \lsup{\cBlg_1,\nDuAlg_1}\CanonDD) \\
    &\simeq
    \lsup{\cBlg_2}\MarkedMin_{\cBlg_1}\DT~
    \lsup{\cBlg_2} \iota_{\cClg_2}\DT~\lsup{\cClg_1} X.
  \end{align*}

  Equation~\eqref{eq:ComputeY} uniquely characterizes
  $\lsup{\Clg_2}Y$, according to Lemma~\ref{lem:Injection}. 

  To see $\lsup{\cClg}Y$ is relevant, we use the above computation
  and Proposition~\ref{prop:CommuteWithCanon}, as follows:
  \begin{align*}
    \lsup{\cBlg_2}\iota_{\cClg_2}\DT~ \lsup{\cClg}Y
    &
    \simeq 
    Z_{\nDuAlg_1}\DT
    (\lsup{\cBlg_2}\MarkedMin_{\cBlg_1}\DT~
    \lsup{\cBlg_1,\nDuAlg_1}\CanonDD) \\
    &
    \simeq 
    Z_{\nDuAlg_1}\DT
    (\lsup{\nDuAlg_1}\nMarkedMin_{\nDuAlg_2}\DT~
    \lsup{\nDuAlg_2,\cBlg_2}\CanonDD) 
    =
    (Z_{\nDuAlg_1}\DT
    \lsup{\nDuAlg_1}\nMarkedMin_{\nDuAlg_2})\DT~
    \lsup{\nDuAlg_2,\cBlg_2}\CanonDD 
    \end{align*}
\end{proof}

\begin{proof}[Proof of Theorem~\ref{thm:ComputeMarkedUpper}]
  We prove by induction on the number of components
  that $\lsup{\cClg}\DmodAlg(\DiagUp)$ is relevant.
  The case of one component was proved in~\cite{HolKnot}.
  The inductive hypothesis then follows from
  Proposition~\ref{prop:ExtendByOne}.
\end{proof}

Let $\Diag$ be a canonically marked planar diagram for
an oriented link, and let
$\DiagUp$ be the upper diagram obtained by removing the
global minimum, and $\Hup$ be its associated upper diagram.
The output algebra $\Clg(1)$ of $\DiagUp$ is identified with
\[ \Field[U_1,U_2]/U_1 U_2.\]
There is a ring  isomorphism
\[ \Psi\colon \Clg(1)\to \Field[w_1,z_1]/w_1 z_1,\]
sending $U_1$ to $w_1$ and $U_2$ to $z_1$.

\begin{thm}
  \label{thm:GeneralComputation}
  For any Heegaard diagram $\HD$ representing $\orL^+$,
  there is a chain homotopy equivalence
  \[\CFL(\HD)\simeq [\Psi]_{\Clg(1)}\DT \DmodAlg(\DiagUp).\]
\end{thm}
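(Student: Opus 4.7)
The plan is to use Heegaard-invariance of $\CFL$ to reduce to a specific diagram adapted to $\Diag$, decompose that diagram along a horizontal slice just above the global minimum, and then apply a pairing theorem together with Theorem~\ref{thm:ComputeMarkedUpper}.

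By Heegaard-invariance it suffices to verify the equivalence for the Heegaard diagram $\HD_\Diag$ constructed in Section~\ref{subsec:HeegLinkProj} from the canonically marked projection $\Diag$ (with admissibility markers inserted as in Figure~\ref{fig:AdmissMarker}, so that the diagram is admissible in the sense of Definition~\ref{def:PerDom}). Slicing $\HD_\Diag$ along a generic horizontal circle placed immediately above the global minimum produces a decomposition $\HD_\Diag = \Hdown \cup \Hup$, where $\Hup$ is the marked upper Heegaard diagram associated to $\DiagUp$ and $\Hdown$ is the elementary marked lower diagram for the distinguished marked minimum, containing the pair of basepoints $\wpt_1,\zpt_1$ with $w_1 z_1 = 0$ imposed on the base ring.

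Next I would invoke the upper/lower analogue of the pairing theorem (the type~$A$--type~$D$ version of Theorem~\ref{thm:PairDAwithD}, built on the model of~\cite[Theorem~\ref{HK:thm:PairAwithD}]{HolKnot}), adapted to the marked setting exactly as in Propositions~\ref{prop:CurvedDA} and~\ref{prop:DAmid}: the new $\alpha$-boundary degenerations at the marked pair $(\wpt_1,\zpt_1)$ of $\Hdown$ cancel against the corresponding $\beta$-boundary degenerations of $\Hup$. This yields a chain-homotopy equivalence
\[
  \CFL(\HD_\Diag)\;\simeq\;\Amod(\Hdown)\DT\Dmod(\Hup).
\]
A direct local computation on $\Hdown$---which has a single Heegaard state, no interior periodic domains, and no positive polygons beyond those crossing the basepoints or the two outgoing circles---shows that $\Amod(\Hdown)$ is a rank-one $\Ainfty$ module over $\Clg(1)$ with base ring $\Field[w_1,z_1]/w_1 z_1$, whose $\Ainfty$ operations degenerate to the action induced by the ring homomorphism $\Psi$: the disks crossing $\wpt_1$ (resp. $\zpt_1$) record the image of $U_1$ (resp. $U_2$), while length-$\tfrac12$ Reeb chord contributions reproduce the left $\Clg(1)$-module structure. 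Hence $\Amod(\Hdown)\simeq[\Psi]_{\Clg(1)}$. Combining this with the identification $\lsup{\cClg}\Dmod(\Hup)\simeq\lsup{\cClg}\DmodAlg(\DiagUp)$ from Theorem~\ref{thm:ComputeMarkedUpper} yields
\[
  \CFL(\HD_\Diag)\;\simeq\;[\Psi]_{\Clg(1)}\DT\DmodAlg(\DiagUp),
\]
which is the asserted equivalence.

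The main obstacle is the clean formulation and proof of the upper/lower pairing theorem in the marked link setting: the excerpt only records the upper/middle version (Theorem~\ref{thm:PairDAwithD}), so one must first establish the type~$A$--type~$D$ analogue and, in doing so, verify that each novel $\alpha$-boundary degeneration introduced by the pair $(\wpt_1,\zpt_1)$ on the lower side is matched by a $\beta$-boundary degeneration of $\Hup$. This is a slightly more delicate cancellation than in Proposition~\ref{prop:CurvedDA}, because here the two types of degenerations live on opposite sides of the gluing interface and only the gluing construction brings them into contact. A secondary but routine issue is checking that the sliced pieces $\Hup$ and $\Hdown$ remain admissible in their respective senses once the admissibility markers of Section~\ref{subsec:HeegLinkProj} have been placed in $\Diag$.
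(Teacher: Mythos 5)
Your proposal takes essentially the same approach as the paper's (very terse) proof: slice the Heegaard diagram of the canonically marked projection just above the global minimum, identify the type~$A$ module of that minimum with $[\Psi]_{\Clg(1)}$, apply an upper/lower pairing theorem, and then substitute $\lsup{\cClg}\DmodAlg(\DiagUp)$ for $\lsup{\cClg}\Dmod(\Hup)$ via Theorem~\ref{thm:ComputeMarkedUpper}. The paper handles the first identification simply by citing \cite[Lemma~\ref{HK:lem:GlobalMinimum}]{HolKnot} rather than by the direct moduli-space computation you sketch, and it dispatches the rest as ``a particular easy special case of the pairing theorem'' without spelling out the adaptation you rightly flag as the main remaining work. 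One small inaccuracy in your discussion of the degenerations: both $\wpt_1$ and $\zpt_1$ lie in $\Hdown$, so the ``new'' $\alpha$-boundary degeneration is entirely on the lower side; what is spread across the gluing interface is the $\beta$-boundary degeneration through $\wpt_1$ and $\zpt_1$, which closes up only after gluing. In fact the reason this case is ``easy'' is that at the global minimum the outgoing algebra is $\Clg(1)=\Field[U_1,U_2]/U_1U_2$, so the curvature $\mu_0=U_1U_2$ vanishes identically and the pairing is between an honest (uncurved) type~$D$ structure and a one-generator type~$A$ module — there is no delicate cancellation left to perform.
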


\begin{proof}
  The bimodule $\lsup{\Field[w,z]/wz}[\Psi]_{\cClg(1)}$ can
  be thought of as the type $A$ module associated to the global
  minimum. (See~\cite[Lemma~\ref{HK:lem:GlobalMinimum}]{HolKnot}.
  Thus, the lemma can be seen as a consequence of (a particular easy
  special case of) the pairing theorem.
\end{proof}

\begin{proof}[Proof of Theorem~\ref{thm:ComputeHFLa}]
  This follows from Theorem~\ref{thm:GeneralComputation},
  by specializing to 
  \[ w_1=z_1=\dots=w_\ell=z_\ell=0.\]
\end{proof}

\subsection{Restricting idempotents}

In our computations, we do not need the full bimodule
$\lsup{\cBlg_2}\MarkedMin_{\cBlg_1}$, as we are able to work over $\cClg$.
Thus, for computations, it suffices to use only
$\lsup{\cBlg_2}\MarkedMin_{\cBlg_1}\DT~\lsup{\cBlg_1}\iota_{\cClg_1}$.
The actions for this bimodule are described in Figure~\ref{fig:MarkedMinOverC};
see also Figure~\ref{fig:MarkedMinOverCHat}
for 
$\lsup{\cBlg_2}\MarkedMinHat_{\cBlg_1}\DT~\lsup{\cBlg_1}\iota_{\cClg_1}$.
\[ \lsup{\Blg(n,\Matching')}\MarkedMin_{\Clg(n+1,\Matching)}
= \lsup{\Blg(n,\Matching')}\MarkedMin_{\Blg(n+1,\Matching)}
\DT \lsup{\Blg(n+1)}i_{\Clg(n+1)}:\]

\begin{figure}
     \begin{tikzpicture}[scale=1.8]
     \node at (0,5) (A) {$A_1$} ;
     \node at (-2,3) (Y) {$Y_1$} ;
     \node at (2,3) (X) {$X_1$} ;
     \node at (0,2) (B) {$B_1$} ;
     \draw[->] (A)  [bend left=5] to node[below,sloped] {\tiny{$z^\ell\otimes R_2 U_2^\ell$}}  (Y)  ;
     \draw[->] (Y)  [bend left=5] to node[above,sloped] {\tiny{$z^{\ell+1}\otimes L_2 U_2^\ell$}}  (A)  ;
     \draw[->] (X) [loop right] to node[above,sloped]{\tiny{$z^\ell\otimes U_2^\ell$}} (X);
     \draw[->] (X)  [bend left=5] to node[below,sloped] {\tiny{$z^\ell\otimes L_2 U_2^\ell$}}  (B)  ;
     \draw[->] (B)  [bend left=5] to node[above,sloped] {\tiny{$z^{\ell+1}\otimes R_2 U_2^\ell$}}  (X)  ;
     \draw[->] (A) [bend left=5] to node[above,sloped] {\tiny{$w^{k}z^\ell\otimes (U_1^{k+1},R_2U_2^\ell)$}}  (X)  ;
     \draw[->] (Y)  to node[below,sloped] {\tiny{$w^{k} z^{\ell}\otimes (L_2 U_2^{\ell},U_1^{k+1})$}}  (B)  ;
     \draw[->] (A) [bend left=5] to node[above,sloped] {\tiny{$w^{k}z^\ell\otimes ((U_1^{k+1},U_2^{\ell+1})+(U_2^{\ell+1},U_1^{k+1}))$}}  (B)  ;
 \draw[->](B) [bend left=5] to node[above,sloped]{\tiny{$wz$}} (A) ;
     \draw[->] (A) [loop above] to node[above,sloped]{\tiny{$w^k z^\ell \otimes U_1^k U_2^{\ell}$}} (A);
     \draw[->] (B) [loop below] to node[below,sloped]{\tiny{$w^k z^\ell \otimes U_1^k U_2^{\ell}$}} (B);
     \draw[->] (X) to node[below,sloped,pos=.8] {\tiny{$w$}} (Y) ;
     \draw[->] (Y) [loop left] to node[above,sloped]{\tiny{$z^\ell\otimes U_2^\ell$}} (Y);
   \end{tikzpicture}
   \caption{\bf Actions for $\lsup{\cBlg_2}\MarkedMin_{\cBlg_1}\DT \lsup{\cBlg_1}\iota_{\cClg_1}$}
   \label{fig:MarkedMinOverC}
\end{figure}

\begin{figure}
    \begin{tikzpicture}[scale=1.8]
    \node at (0,4) (A) {$A_1$} ;
    \node at (-1,3) (Y) {$Y_2$} ;
    \node at (1,3) (X) {$X_2$} ;
    \node at (0,2) (B) {$B_1$} ;
    \draw[->] (A) to node[above,sloped] {\tiny{$R_2$}}  (Y)  ;
    \draw[->] (X) to node[below,sloped] {\tiny{$L_2$}}  (B)  ;
    \draw[->] (A) to node[above,sloped] {\tiny{$(U_1,R_2)$}}  (X)  ;
    \draw[->] (Y)  to node[below,sloped] {\tiny{$(L_2,U_1)$}}  (B)  ;
    \draw[->] (A)  to node[above,sloped] {\tiny{$(U_1,U_2)+(U_2,U_1)$}}  (B)  ;
  \end{tikzpicture}
   \caption{\bf Actions for $\lsup{\cBlg_2}\MarkedMinHat_{\cBlg_1}\DT \lsup{\cBlg_1}\iota_{\cClg_1}$}
   \label{fig:MarkedMinOverCHat}
\end{figure}

\bibliographystyle{plain}
\bibliography{biblio}

\end{document}